\documentclass[10.5pt, letter]{amsart}
\usepackage{amsmath}
  \usepackage{paralist}
  \usepackage[usenames]{color}
  \usepackage{graphics} %% add this and next lines if pictures should be in esp format
  \usepackage{epsfig} %For pictures: screened artwork should be set up with an 85 or 100 line screen
 \usepackage[colorlinks=true]{hyperref}
   % Warning: when you first run your tex file, some errors might occur, please just
   % press enter key to end the compilation process,  then it will be fine if you run your tex file again.
   % Note that it is highly recommended by AIMS to use this package.
\hypersetup{urlcolor=blue, citecolor=red}
\setlength{\textwidth}{140mm}
\setlength{\textheight}{220mm}
%\setlength{\oddsidemargin}{0pt}
%\setlength{\evensidemargin}{0pt}
%\frenchspacing
%  \textheight=8.2 true in
%   \textwidth=7.0 true in
    \topmargin=-5mm
\hoffset=-10mm
%\footskip=20mm
% The next 5 line will be entered by an editorial staff.

 % Please minimize the usage of "newtheorem", "newcommand", and use
 % equation numbers only situation when they provide essential convenience
 % Try to avoid defining your own macros

\newtheorem{theorem}{Theorem}[section]

\newtheorem{lemma}[theorem]{Lemma}

\theoremstyle{definition}

\newtheorem{remark}{Remark}

%% Place the running title of the paper with 40 letters or less in []
 %% and the full title of the paper in { }.
\title[]
      {Reconstruct Lam\'{e} parameters of linear isotropic elasticity system}

% Place all authors' names in [ ] shown as running head;
% No more than 40 letters. Leave { } empty
% Please use `and' to connect the last two names if applicable
%\author[Ru-Yu Lai]{}

% It is required to enter MSC and Keywords.
\subjclass{Primary: 35R30 ; Secondary: 35Q74.}
 \keywords{Inverse Problems, elasticity system, Lam\'e parameters}

% Email address of each of all authors is required.
% You may list email addresses of all other authors, separately.
% \email{rylai@u.washington.edu}

% Put your short thanks below. For long thanks/acknowlegements,
%please go to the last acknowlegments section.
\thanks{Department of Mathematics, University of Washington, Seattle, WA 98195, USA. Email: rylai@uw.edu}

\begin{document}
\maketitle
%\tableofcontents

% Enter the first author's name and address:
\centerline{\scshape Ru-Yu Lai }
\medskip
{\footnotesize
% please put the address of the first author
% \centerline{}
 %  \centerline{}
 } % Do not forget to end the {\footnotesize by the sign }

\bigskip

 %\centerline{(Communicated by )}

%The abstract of your paper
\begin{abstract}
Transient Elastography enables detection and characterization of tissue abnormalities. In this paper we assume that the displacements are modeled by linear isotropic elasticity system and the tissue displacement has been obtained by the first step in hybrid methods. Then we reconstruct the Lam\'{e} parameters of this system from knowledge of tissue displacement. We show that for a sufficiently large number of solutions of the elasticity system and for an open set of the well-chosen boundary conditions, $(\lambda,\mu)$ can be uniquely and stably reconstructed. The set of well-chosen boundary conditions is characterized in terms of appropriate complex geometrical optics solutions.
\end{abstract}

\section{Introduction}

Medical imaging is the technique and process used to create images of the human body for clinical purposes or medical science. Each available imagine method has its advantages and disadvantages.
Medical imaging modalities such as Computerized Tomography (CT), Magnetic Resonance Imaging (MRI) and Ultrasound Imaging (UI) are examples of modalities providing high resolution. In some situations, these modalities fail to exhibit a sufficient contrast between different types of tissues. For instance, in breast imaging ultrasound provides high resolution, while suffers from a low contrast. Other modalities, based on optical, elastic, or electrical properties of these tissues, display high contrast, such as Optical Tomography (OT) and Electrical Impedance Tomography (EIT).
For example, some breast tumors on early stages might have no contrast with the healthy tissues with respect to ultrasound propagation, but a huge contrast in their optical and electric properties.

In order to obtain better image, the natural idea is to try to combine different imaging modalities. These are coupled-physics imaging methods, also called hybrid methods. It is to combine the high resolution modality with another high contrast modality.
Examples of possible physical couplings include: Photo-Acoustic Tomography (PAT) and Thermo-Acoustic Tomography (TAT), Ultrasound modulated Optical Tomography (UMOT) and elasticity with ultrasound in Transient Elastography (TE).

Reconstructions in hybrid inverse problems involve two steps. The first step is to solve the inverse problem concerning the high-resolution-low-contrast modality. For instance, in PAT and TAT, this corresponds to the reconstructing the initial condition of a wave equation from boundary measures. In Transient Elastography, this is to solving an inverse scattering problem in a time-dependent wave equation. In this paper, we assume that this first step has been performed. We will focus on the second step.
In the second step, we assume that the first step has been done and we try to reconstruct the coefficients that display high contrast from the mappings obtained during the first step.

In this paper, the modality we consider is Transient Elastography.
TE is a non-invasive tool for measuring liver stiffness. The device creases high-resolution shear
stiffness images of human tissue for diagnostic purposes. Shear
stiffness is targeted because shear wave speed is larger in abnormal
tissue than in normal tissue. In the experiment tissue initially is
excited with pulse at the boundary. This pulse creates the shear wave
passing through
the liver tissue.
Then the tissue displacement is measured by using the ultrasound. The displacement is related to the tissue stiffness because the soft tissue has larger deformation than stiff tissue. When we have
tissue displacement, we want to reconstruct shear modulus $\mu$ and the first parameter $\lambda$.
See \cite{MZM} and references there for more details. In TE, the high resolution modality is also ultrasound.
 The tissue displacement data can be obtained by the ultrasound in the first step. The second step is to recover the Lam\'{e} parameters from the knowledge of the tissue displacement.
In the following paper, we will assume the first step has been performed.

Let $\Omega\subset \mathbb{R}^n,\ n=2,3,$ be an open bounded domain with smooth boundary. Let $u$ be the displacement satisfying the linear isotropic elasticity system
\begin{align}\label{origin}
\left\{\begin{array}{rl}
     \nabla\cdot (\lambda(\nabla\cdot u)I+2S(\nabla u)\mu)+k^2 u=0\ \ &\hbox{in $\Omega$},\\
     u=g\ \ &\hbox{on $\partial\Omega$},
     \end{array}
     \right.
\end{align}
where $S(A)=(A+A^T)/2$ is the symmetric part of the matrix $A$. Here $(\lambda,\mu)$ are Lam\'{e} parameters and $k\in \mathbb{R}$ is the frequency. Assume that $k$ is not the eigenvalue of the elasticity system.
The set of internal functions obtained by ultrasound in TE is given by :
\begin{align*}
    H(x)=(u_j(x))_{1\leq j\leq J}\ \ \ \hbox{in $\Omega$}
\end{align*}
for some positive integer $J$.

Denote that $\mathcal{P}=\{(\lambda,\mu)\in C^{7}(\overline{\Omega})\times C^9(\overline{\Omega});\ 0<m\leq \|\lambda\|_{C^{7}(\overline{\Omega})}, \|\mu\|_{C^{9}(\overline{\Omega})}\leq M\ \  \hbox{and}\ \   \lambda,\ \mu>0 \}$.
Let $H$ and $\tilde H$ be two sets of internal data of the elasticity system with parameters $(\lambda, \mu)$ and $(\tilde \lambda, \tilde\mu)$, respectively. Below is our main result:
\begin{theorem}

Let $\Omega$ be an open bounded domain of $\mathbb{R}^n$ with smooth boundary. Suppose that the Lam\'{e} parameters $(\lambda, \mu)$ and $(\tilde\lambda,\tilde\mu)\in \mathcal{P}$ and $\mu|_{\partial\Omega}=\tilde\mu|_{\partial\Omega}$.
Let $u^{(j)}$ and $\tilde u^{(j)}$ be the solutions of the elasticity system with boundary data $g^{(j)}$ for parameters $(\lambda, \mu)$ and $(\tilde\lambda,\tilde\mu)$, respectively.
Let $H=(u^{(j)})_{1\leq j\leq J}$ and $\tilde H=(\tilde u^{(j)})_{1\leq j\leq J}$ be the corresponding internal data for $(\lambda, \mu)$ and $(\tilde\lambda,\tilde\mu)$, respectively for some integer $J\geq 3n+1$ .

Then there is an open set of the boundary data $(g^{(j)})_{1\leq j\leq J}$ such that
if $H=\tilde H $
implies $(\lambda,\mu)=(\tilde \lambda,\tilde\mu)$ in $\Omega$.\\
Moreover, we have the stability estimate
$$
     \|\mu-\tilde\mu\|_{C(\Omega)}+\|\lambda-\tilde\lambda\|_{C(\Omega)}\leq C\|H-\tilde H \|_{C^2(\Omega)}.
$$
\end{theorem}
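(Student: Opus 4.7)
The natural approach is to turn the internal measurement equality $H=\tilde H$ into a redundant first order PDE system for the differences $\delta\lambda:=\lambda-\tilde\lambda$ and $\delta\mu:=\mu-\tilde\mu$, and then exploit complex geometrical optics (CGO) solutions to invert the leading symbol of that system pointwise. I would start by subtracting the two elasticity equations: since $H=\tilde H$ forces $u^{(j)}=\tilde u^{(j)}$ pointwise in $\Omega$, the zeroth order terms $k^2 u^{(j)}$ cancel and one obtains, for each $j=1,\dots,J$, the vector equation
\begin{equation*}
\nabla\cdot\!\bigl(\delta\lambda\,(\nabla\cdot u^{(j)})\,I+2\delta\mu\,S(\nabla u^{(j)})\bigr)=0\quad\text{in }\Omega.
\end{equation*}
Expanding the divergence gives an $n$-component linear relation in the $2n+2$ quantities $(\delta\lambda,\nabla\delta\lambda,\delta\mu,\nabla\delta\mu)$, with coefficients built from $u^{(j)}$, $\nabla u^{(j)}$, and $\nabla^2 u^{(j)}$. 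Stacking these $J$ vector equations produces an $nJ\times(2n+2)$ algebraic system $\mathcal{A}(x)V(x)=0$ whose entries are known (from $H=\tilde H$) up to the $C^2$ norm appearing in the stability estimate.

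Next I would show that for a carefully chosen family of boundary data one can make a suitable $(2n+2)\times(2n+2)$ submatrix of $\mathcal{A}(x)$ uniformly invertible on $\overline\Omega$. The tool is CGO solutions for the isotropic elasticity operator: one constructs solutions of the form $u^{(j)}=e^{\rho\cdot x/h}(a_j+hr_j)$ with complex vectors $\rho$ satisfying $\rho\cdot\rho=0$ and amplitudes $a_j$ chosen to populate both pressure and shear polarizations, so that as $h\to 0$ the leading terms of $\nabla\cdot u^{(j)}$, $S(\nabla u^{(j)})$, and their derivatives are explicitly prescribed polynomials in $\rho$. Choosing $3n+1$ such modes (three families of roughly $n$ vectors: one to control $\nabla\delta\mu$, one for $\nabla\delta\lambda$, and a shear family to control the zeroth order terms, plus a calibration mode) gives a leading symbol whose determinant is, up to an exponential factor, a nonzero polynomial in the $\rho$'s. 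Because invertibility is an open condition and the elasticity system depends continuously on boundary data, the same invertibility persists on an open neighborhood of the boundary traces of these CGO solutions; this furnishes the open set of well-chosen $g^{(j)}$ promised in the statement.

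Once invertibility is in hand, the algebraic system yields explicit, pointwise, bounded linear formulas for $(\delta\lambda,\delta\mu)(x)$ in terms of first and second derivatives of $H-\tilde H$. Uniqueness is then immediate when $H=\tilde H$, and the stability estimate $\|\delta\lambda\|_{C(\Omega)}+\|\delta\mu\|_{C(\Omega)}\le C\|H-\tilde H\|_{C^2(\Omega)}$ follows from the uniform bound on the inverse of the leading symbol together with the $C^7$–$C^9$ regularity assumed in $\mathcal P$ (which guarantees the required derivatives of the $u^{(j)}$ are controlled). The boundary condition $\mu|_{\partial\Omega}=\tilde\mu|_{\partial\Omega}$ is only used if one prefers to recover $\delta\mu$ by integrating a transport equation rather than algebraically, in which case it fixes the Cauchy data.

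The main obstacle I anticipate is step two: building CGO solutions for the coupled elastic system with sufficient algebraic flexibility. Unlike the scalar conductivity or Helmholtz case, the elasticity operator has a nontrivial principal part mixing pressure and shear components, so one must diagonalize (via a Helmholtz–Hodge type decomposition or a Nakamura–Uhlmann reduction) before inserting the $e^{\rho\cdot x/h}$ ansatz, and then verify that enough degrees of freedom remain in the amplitudes to ensure the $(2n+2)\times(2n+2)$ determinant condition is generically nonvanishing. This is where the unusually strong regularity $(\lambda,\mu)\in C^7\times C^9$ and the count $J\ge 3n+1$ both enter, and it is the step I would spend the most care on.
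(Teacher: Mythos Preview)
Your strategy differs substantially from the paper's, and the part you flag as the main obstacle is indeed where the proposal is incomplete.

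\textbf{What the paper actually does.} The paper does \emph{not} subtract the two systems to obtain a homogeneous equation in $(\delta\lambda,\delta\mu)$. Instead it works directly with a single coefficient pair and rewrites the (summed) elasticity equation as a scalar identity $u^{\sharp}\!\cdot F + u^{\flat}\!\cdot G = -k^{2}u^{*}$, where $F$ collects $\lambda+\mu$ and $\nabla(\lambda+\mu)$ and $G$ collects $\mu$ and $\nabla\mu$. Using the Eskin--Ralston CGO solutions for the reduced system (via Ikehata's substitution $u=\mu^{-1/2}w+\mu^{-1}\nabla f-f\nabla\mu^{-1}$), the paper manufactures, \emph{locally near each $x_{0}$}, real combinations $\mathfrak{u}^{(j)\sharp}$ whose span lets it annihilate the $F$-term. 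What remains is a genuine transport equation $\beta_{l}\cdot\nabla\mu+\gamma_{l}\mu=\text{known}$; a second CGO computation shows the $\beta_{l}$ form a frame, so one obtains $\nabla\mu+\Gamma\mu=\Phi$ and \emph{integrates it along segments from a boundary point}, which is precisely where the hypothesis $\mu|_{\partial\Omega}=\tilde\mu|_{\partial\Omega}$ enters and is not optional. A separate $\sharp/\flat$ splitting then yields an algebraic relation $\kappa\lambda=\sigma\mu+\text{known}$ with $\kappa\neq 0$, giving $\lambda$ once $\mu$ is in hand. Finally the local estimates are patched to all of $\Omega$ by a finite covering argument. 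Thus the counts $J\ge 7$ (2D) and $J\ge 10$ (3D) come from two separate eliminations, not from a single $(2n{+}2)\times(2n{+}2)$ determinant.

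\textbf{Where your argument is thin.} Your plan hinges on exhibiting a $(2n{+}2)\times(2n{+}2)$ invertible minor of $\mathcal{A}(x)$. For a single CGO solution the columns of $\mathcal{A}$ have \emph{different} orders in $\tau$: the coefficients multiplying $\delta\lambda$ and $\delta\mu$ involve $\partial_{i}(\nabla\!\cdot u)$ and $\partial_{k}S_{ik}$, which are $O(\tau^{2})$, while those multiplying $\nabla\delta\lambda$ and $\nabla\delta\mu$ are $O(\tau)$. After normalising, the leading-order rows therefore live in a low-dimensional subspace (only the $\delta\lambda,\delta\mu$ columns survive), and your ``nonzero polynomial in the $\rho$'s'' does not follow from the top symbol alone. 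Getting the remaining columns to contribute requires exactly the delicate cancellations the paper performs (taking $\rho$ and $i\rho$ and summing so that the $O(\tau)$ terms cancel, exposing the next order). Until you carry out that multi-scale analysis, the invertibility claim is unproven; and if you fall back on integrating a transport equation, you are back to the paper's route and the boundary hypothesis on $\mu$ is no longer dispensable. Your proposal also omits the local-to-global covering step, which in the paper is what converts the neighbourhood estimates into the stated inequality on all of $\Omega$.
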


The remainder of this paper is organized as follows. In section \ref{CGO}, we introduce the CGO solutions of the elasticity system. Section \ref{global2} is devoted to constructing the Lam\'{e} parameters in two-dimensional case. The reconstruction of $(\lambda,\mu)$ in 3D is presented in section \ref{global3}.

\section{Complex geometric optics solutions of the elasticity system}\label{CGO}
In this section, we will briefly introduce the complex geometric
optics (CGO) solutions of the elasticity system. Based on \cite{I},
we can derive the following reduced system. Let $\left(
   w,
   f
\right)^T$ satisfy
\begin{align}\label{wfa}
    \Delta \left(\begin{array}{c}
   w \\
   f
\end{array}
\right)+V_1(x)\left(\begin{array}{c}
   \nabla f \\
   \nabla\cdot w
\end{array}
\right)+V_0(x)\left(\begin{array}{c}
   w \\
   f
\end{array}
\right)=0.
\end{align}
Here $V_0$ contains the third derivative of $\mu$ and
$$
     V_1(x)=\left(
            \begin{array}{cc}
              -2\mu^{1/2}\nabla^2\mu^{-1}+\mu^{-3/2}k^2 & -\nabla \log\mu \\
              0 & \frac{\lambda+\mu}{\lambda+2\mu}\mu^{1/2} \\
            \end{array}
          \right).
$$
Then the solution of the elasticity system (\ref{origin}) is
$$
     u:=\mu^{-1/2}w+\mu^{-1}\nabla f-f\nabla\mu^{-1}.
$$
Here $\nabla^2 g$ denotes the Hessian matrix $\partial^2g/\partial x_i\partial x_j$. Note that we will not need the explicit form of $V_0$ in the construction of CGO solution.

The construction of CGO solutions of (\ref{wfa}) with linear phase was first deduced by Nakamura and Uhlmann in \cite{NU1} and \cite{NU2}, where they introduced the intertwining property in handling the first order term. Eskin and Ralston \cite{ER} also gave a similar result in 2002. Later Uhlmann and Wang \cite{UW} used Carleman estimate to deduce the CGO solutions in two-dimensional case.

In the following sections, we will use the CGO solutions constructed in Eskin and Ralston's paper \cite{ER}. For the rest of this section, we will introduce the key lemma from \cite{ER}.

To construct CGO solutions of (\ref{wfa}), it is convenient to work on $\mathbb{R}^n$ instead of $\Omega$. Since $\Omega$ is bounded, we pick a ball $B_R$ for $R>0$, such that $\overline{\Omega}\subset B_R$ and extend $\lambda$ and $\mu$ to $\mathbb{R}^n$ by preserving its smoothness and also supp$\lambda$, supp$\mu\subset B_R$.

Let $\alpha$ and $\beta$ be two orthogonal unit vectors in $\mathbb{R}^n$.  Denote that $\rho=\tau(\alpha+i\beta)\in \mathbb{C}^n$ and $\theta=\alpha+i\beta$ with $\tau>0$. Eskin and Ralston \cite{ER} showed the following important result in three-dimensional case. For $n=2$, it still holds.
\begin{lemma}(Eskin-Ralston)\label{Eskin}
Consider the Schr$\ddot{o}$dinger equation with external Yang-Mills potentials
\begin{align}\label{Lu}
     Lu=-\Delta u-2iA(x)\cdot \nabla u+B(x)u=0,\ \ \ \ \ x\in B_R\subset\mathbb{R}^n
\end{align}
where $A(x)=(A_1(x), \ldots, A_n(x))\in C^{n_0}(B_R),\ n_0\geq 6$ with $A_j(x)$ and $B(x)$ are $(n+1)\times (n+1)$ matrices.
Then there are solutions of (\ref{Lu}) of the form
\begin{align*}
     u=e^{i\rho\cdot x} (C_0(x,\theta)g(\theta\cdot x)
                        +O(\tau^{-1})
                        )
\end{align*}
where $C_0\in C^{n_0}(B_R)$ is the solution of
\begin{align*}
      i\theta\cdot \frac{\partial}{\partial x}C_0(x,\theta)=\theta \cdot A(x)C_0(x,\theta)
\end{align*}
with $\det C_0$ is never zero, $g(z)$ is an arbitrary polynomial in complex variables $z$, and $O(\tau^{-1})$ is bounded by $C(1/\tau)$ in $H^l(B_R),\ 0\leq l \leq n_0 -2$.
\end{lemma}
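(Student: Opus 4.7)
The plan is a WKB construction: write $u(x) = e^{i\rho\cdot x}v(x)$ with amplitude ansatz $v(x) = C_0(x,\theta)\bigl(g(\theta\cdot x) + \tilde r(x,\tau)\bigr)$, solve the leading transport equation exactly so that $C_0 g$ cancels all $O(\tau)$ contributions, and estimate the remainder $\tilde r$ by a weighted $L^2$ inversion of the conjugated Laplacian. Using $e^{-i\rho\cdot x}\Delta(e^{i\rho\cdot x}\,\cdot\,) = \Delta + 2i\rho\cdot\nabla - \rho\cdot\rho$ together with the null condition $\rho\cdot\rho = \tau^2\theta\cdot\theta = 0$ (forced by $|\alpha|=|\beta|=1$, $\alpha\perp\beta$), the equation $Lu=0$ reduces to
\begin{align*}
    \mathcal{L}_\rho v := (-\Delta - 2i\rho\cdot\nabla + 2\rho\cdot A - 2iA\cdot\nabla + B)v = 0.
\end{align*}
The $O(\tau)$ part, acting on a product $C_0(x,\theta)g(\theta\cdot x)$, collapses to $-2\tau\bigl[i\theta\cdot\nabla C_0 - (\theta\cdot A)C_0\bigr]g(\theta\cdot x)$ once $\theta\cdot\theta=0$ is used to kill the $g'$ term. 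Requiring this to vanish for every polynomial $g$ forces precisely the intertwining equation $i\theta\cdot\nabla C_0 = (\theta\cdot A)C_0$ in the statement.

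To solve the intertwining equation I would pass to coordinates $x = s\alpha + t\beta + y$ with $y\in\mathbb{R}^{n-2}$ ($y$ absent when $n=2$), and set $z = s+it$; then $\theta\cdot\nabla = 2\partial_{\bar z}$, so the equation becomes the matrix $\bar\partial$-system
\begin{align*}
    \partial_{\bar z}C_0 = -\tfrac{i}{2}(\theta\cdot A)\,C_0,
\end{align*}
with $y$ a parameter and coefficient compactly supported in $z$ since $\mathrm{supp}\,A\subset B_R$. Following \cite{ER}, I would build $C_0$ by a Picard iteration $C_0^{(k+1)} = I + T\bigl(-\tfrac{i}{2}(\theta\cdot A)C_0^{(k)}\bigr)$ based on the Cauchy transform $Tf(z) = -\tfrac{1}{\pi}\int_{\mathbb{C}}f(\zeta)/(\zeta-z)\,dA(\zeta)$; standard Cauchy-transform estimates transfer the regularity $A\in C^{n_0}$ to $C_0\in C^{n_0}$. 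To secure $\det C_0 \neq 0$ throughout $B_R$, I would normalise $C_0 \to I$ outside a large disc (consistent with the equation since $A$ vanishes there) and construct $C_0^{-1}$ directly from the companion system $\partial_{\bar z}(C_0^{-1}) = \tfrac{i}{2}C_0^{-1}(\theta\cdot A)$ by the same Picard scheme; the existence of a $C^{n_0}$ candidate for $C_0^{-1}$ certifies that $C_0$ is globally invertible.

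With $C_0$ in hand, writing $v = C_0(g + \tilde r)$ and using only the intertwining equation plus $\theta\cdot\theta = 0$, a direct computation gives the conjugation identity
\begin{align*}
    C_0^{-1}\mathcal{L}_\rho(C_0\phi) = (-\Delta - 2i\rho\cdot\nabla + \widetilde W)\phi,
\end{align*}
where $\widetilde W$ is a $\tau$-independent first-order operator with $C^{n_0-2}$ coefficients (the dangerous $O(\tau)$ cross term $-2i(\rho\cdot\nabla C_0) + 2(\rho\cdot A)C_0$ cancels by the intertwining equation). Since $(-\Delta - 2i\rho\cdot\nabla)g(\theta\cdot x) = 0$ (again by $\theta\cdot\theta = 0$), we obtain $C_0^{-1}\mathcal{L}_\rho(C_0 g) = \widetilde W g =: -F_\tau$, bounded in $H^{n_0-2}(B_R)$ uniformly in $\tau$. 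The remainder equation
\begin{align*}
    (-\Delta - 2i\rho\cdot\nabla + \widetilde W)\tilde r = F_\tau
\end{align*}
I would invert via the Sylvester--Uhlmann/H\"ahner weighted $L^2$ estimate $\|(-\Delta - 2i\rho\cdot\nabla)^{-1}\|_{L^2_\delta\to L^2_{\delta-1}} \leq C/\tau$ for $-1<\delta<0$, together with an elliptic bootstrap to $H^l$ for $0 \leq l \leq n_0-2$; since $\widetilde W$ is $\tau$-independent and first order, a Neumann series argument absorbs it for $\tau$ large, delivering $\|\tilde r\|_{H^l(B_R)} = O(\tau^{-1})$ and hence the CGO solution claimed.

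The principal obstacle is the construction of a matrix-valued $C_0$ with nowhere vanishing determinant from a matrix $\bar\partial$-system. Noncommutativity of the potential $\theta\cdot A$ blocks the scalar shortcut $C_0 = \exp(\text{Cauchy transform})$, and a naive Picard iteration need not converge in $C^{n_0}$; Eskin--Ralston handle this by controlling the iteration in a norm tailored to the matrix Cauchy transform and by building $C_0^{-1}$ separately from the companion system. The hypothesis $n_0 \geq 6$ enters precisely here and again in the two-derivative loss $F_\tau \in H^{n_0-2}$ of the remainder step.
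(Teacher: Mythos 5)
This lemma is imported into the paper verbatim from Eskin--Ralston \cite{ER} (via \cite{E} for the Yang--Mills Schr\"odinger operator); the paper offers no proof of it, only the citation and the remark that the three-dimensional argument persists for $n=2$. So there is no in-paper argument to compare against, and your proposal has to be judged as a reconstruction of the cited result. As such it is essentially correct and follows the standard architecture: conjugation by $e^{i\rho\cdot x}$ with $\rho\cdot\rho=0$, identification of the $O(\tau)$ term acting on $C_0(x,\theta)g(\theta\cdot x)$ with the intertwining equation $i\theta\cdot\partial_x C_0=(\theta\cdot A)C_0$, reduction of that equation to a matrix $\overline{\partial}$-system in the $(\alpha,\beta)$-plane, and inversion of the conjugated operator $-\Delta-2i\rho\cdot\nabla+\widetilde W$ by the $O(1/\tau)$ weighted estimate plus a Neumann series in $\tau^{-1}$. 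Your computations of the conjugation identity, of the cancellation of the $g'$ term via $\theta\cdot\theta=0$, and of the two-derivative loss placing $\widetilde W$'s coefficients in $C^{n_0-2}$ (hence the range $0\le l\le n_0-2$) all check out.

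The one place where the sketch is genuinely thin is the existence of a globally defined $C_0\in C^{n_0}$ with $\det C_0$ nowhere zero. A naive Picard iteration $C_0^{(k+1)}=I+T(QC_0^{(k)})$ with $Q=-\tfrac i2(\theta\cdot A)$ has no reason to converge for large potentials, and you acknowledge this but resolve it only by gesture. The clean repairs are: (i) solve the integral equation by the Fredholm alternative (the operator $f\mapsto T(Qf)$ is compact on the relevant spaces since $Q$ is compactly supported), with uniqueness for the homogeneous problem supplied by a Liouville argument; and (ii) obtain the nonvanishing of the determinant not from a separately constructed $C_0^{-1}$ (which is circular until you know some solution is invertible) but from Jacobi's formula: $\partial_{\bar z}\det C_0=(\operatorname{tr}Q)\det C_0$, so $\det C_0\,e^{-h}$ is entire and tends to $1$ at infinity for $h=T(\operatorname{tr}Q)$, whence $\det C_0=e^{h}\neq 0$ everywhere. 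Once invertibility is known, your companion system does produce $C_0^{-1}$ with the same regularity. With that substitution your outline matches what \cite{ER} and \cite{NU1} actually do, and nothing else in the proposal needs repair.
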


By the lemma above, the CGO solutions of (\ref{wfa}) can be written as follows
\begin{align*}
\left(\begin{array}{c}
   w \\
   f
\end{array}\right)
=e^{i\rho\cdot x}\left(\left(
                         \begin{array}{c}
                           r \\
                           s \\
                         \end{array}
                       \right)
+O(\tau^{-1})\right),
\end{align*}
where $(r,\ s)^T$ is $C_0(x,\theta)g(\theta\cdot x)$. We can write $w=e^{i\rho\cdot x}(r+O(\tau^{-1}))$ and  $f=e^{i\rho\cdot x}(s+O(\tau^{-1}))$. Plug $(w,f)^T$ into $ u=\mu^{-1/2}w+\mu^{-1}\nabla f-f\nabla\mu^{-1}$. Then we have the CGO solutions of the elasticity system. Note that $(r, s)^T$ satisfies the equation
\begin{align}\label{rs}
     -2\theta\cdot\nabla \left(
                         \begin{array}{c}
                           r \\
                           s \\
                         \end{array}
                       \right)
     =V_1(x)
      \left(
            \begin{array}{cc}
              0_{n\times n} & \theta \\
              \theta^T & 0 \\
            \end{array}
          \right)
     \left(
                         \begin{array}{c}
                           r \\
                           s \\
                         \end{array}
                       \right).
\end{align}

\begin{remark}
Since $C_0$ is invertible at every point in $\Omega$, we can conclude that $(r,\ s)^T$ is not zero everywhere in $\Omega$. This does not imply that both $r$ and $s$ never vanish in $\Omega$. However, for any point $y\in \Omega$, there is a small neighborhood $B_y$ of $y$ in $\Omega$ and a CGO solution of $(w, f)^T$ such that both $r$ and $s$ do not vanish in $B_y$.
\end{remark}

\section{Reconstruction of Lam\'{e} parameters in two-dimensional case}\label{global2}

%\HOX{$\mu\in C^{k+8+\varepsilon}$}
In the previous section, we already have the CGO solutions of the elasticity system. Now we want to use them to give a reconstruction of $\mu$ first.
Let $u=(u_1,u_2)^T$ be the displacement which satisfies the elasticity system
\begin{align}\label{uu}
     \nabla\cdot (\lambda(\nabla\cdot u)I+2S(\nabla u)\mu)+k^2 u=0.
\end{align}
We will recover $\mu$ and $\lambda$ separately in the following two subsections.
\subsection{Reconstruction of $\mu$ in 2D}\label{localmu}

From (\ref{uu}), we can deduce the following equation
\begin{align}\label{equ11}
     u^\sharp \cdot F+u^\flat\cdot G=-k^2u^*.
\end{align}
Here we denote
\begin{align}\label{sharp}
     u^\sharp =\left(\begin{array}{c}
         \partial_1(\nabla\cdot u) \\
         \partial_2(\nabla\cdot u) \\
         \nabla\cdot u \\
         \nabla\cdot u \\
       \end{array}
     \right),\ \
     F= \left(
       \begin{array}{c}
         \lambda+\mu \\
         \lambda+\mu \\
         \partial_1 (\lambda+\mu) \\
         \partial_2 (\lambda+\mu) \\
       \end{array}
     \right), \ \
     u^\flat= \left(
       \begin{array}{c}
                a+b \\
                a-b \\
         \partial_1(a+b) \\
         \partial_2(a-b) \\
       \end{array}
     \right),\ \
      G= \left(
       \begin{array}{c}
        \partial_1\mu \\
        \partial_2\mu \\
         \mu \\
         \mu \\
       \end{array}
     \right)
\end{align}
with $a=\partial_2 u_1+\partial_1 u_2,\ b=\partial_1 u_1-\partial_2
u_2$, and $u^*=(u_1+u_2)$. The component $u^\sharp$ and $u^\flat$
are known since they are vectors which only depend on the internal data
$u$. In order to recover $\mu$, we want to eliminate the first term
of (\ref{equ11}) so that $\mu$ satisfies the transport equation.

%Inspired by the paper of Bal and Uhlmann in 2012 \cite{BU}, we can eliminate the first term of (\ref{equ11}).
Obverse that the vector $u^\sharp$ has three different entries, we
only need to construct three linearly independent vectors on some
subdomain in $\Omega$. With these three vectors, we can remove the
first term of the left hand side of the equation (\ref{equ11}). More
precisely, suppose that $u^{(j)}$, for $j=0,1,2$, are three different
solutions of (\ref{equ11}) which satisfy
\begin{align}
       u^{(j)\sharp}\cdot F+ u^{(j)\flat}\cdot G=-k^2u^{(j)*}.
\end{align}

The notations $\Re f$ and $\Im f$ are defined to be the real and
imaginary part of $f$, respectively.
Now for $j=1,2,3$ and $\natural=\sharp,\flat, *$, we let
$$
\mathfrak{u}_1^{(0)\natural}=\Re \chi(x)u^{(0)\natural},\ \ \mathfrak{u}_2^{(0)\natural}=\Im \chi(x)u^{(0)\natural}
$$
and
$$
   \mathfrak{u}^{(1)\natural}=\Re\chi_3(x)u^{(1)\natural},\ \ \mathfrak{u}^{(2)\natural}=\Im\chi_3(x)u^{(1)\natural},\ \
   \mathfrak{u}^{(3)\natural}=\Re\left(\chi_1(x)u^{(1)\natural}+\chi_2(x)u^{(2)\natural}\right),
$$
where $\chi_j(x)$ is a nonzero function. Then we get
\begin{align}\label{equ123}
       \mathfrak{u}_l^{(0)\sharp}\cdot F+ \mathfrak{u}_l^{(0)\flat}\cdot G=-k^2\mathfrak{u}_l^{(0)*},\ \ l=1,2
\end{align}
and
\begin{align}\label{equ13}
       \mathfrak{u}^{(j)\sharp}\cdot F+ \mathfrak{u}^{(j)\flat}\cdot G=-k^2\mathfrak{u}^{(j)*} \ \ \hbox{for $j=1,2,3$.}
\end{align}
Assume that $\left\{\mathfrak{u}^{(1)\sharp},
\mathfrak{u}^{(2)\sharp}, \mathfrak{u}^{(3)\sharp}\right\}$ are three
linearly independent vectors in some subdomain of $\Omega$, say
$\Omega_0$.
Then there exist three functions
$\Theta^l_1, \Theta^l_2$, and $\Theta^l_3$ such that $
\mathfrak{u}_l^{(0)\sharp}+\sum^3_{j=1}\Theta^l_j
\mathfrak{u}^{(j)\sharp}=0$. For $l=1,2$, multiplying (\ref{equ13})
by $\Theta^l_j$ and summing over $j$ with equation (\ref{equ123}), we
have
\begin{align*}
       \mathrm{v}_l\cdot G=-k^2\left( \mathfrak{u}_l^{(0)*}+\sum^3_{j=1}\Theta^l_j  \mathfrak{u}^{(j)*}\right),
\end{align*}
where $\mathrm{v}_l=
\mathfrak{u}_l^{(0)\flat}+\sum^3_{j=1}\Theta^l_j
\mathfrak{u}^{(j)\flat}$. Let $\beta_l=(\mathrm{v}_l \cdot e_1)
\tilde e_1+ (\mathrm{v}_l \cdot e_2)\tilde e_2$ and
$\gamma_l=(e_3+e_4) \cdot \mathrm{v}_l$. Here $e_j\in\mathbb{R}^4,\
\tilde e_j\in \mathbb{R}^2 $ with the $j^{th}$ entry is $1$ and
others are zero. Then the above equation can be rewritten as
$$
    \beta_l\cdot \nabla\mu+\gamma_l \mu=-k^2\left( \mathfrak{u}_l^{(0)*}+\sum^3_{j=1}\Theta^l_j  \mathfrak{u}^{(j)*}\right).
$$
Suppose that $\beta_1(x)$ and $\beta_2(x)$ are linearly independent
for every $x\in \Omega_0$. Then we can recover $\mu$ in
$\Omega_0\subset\Omega$ for each frequency $k$ independently.

\begin{lemma}\label{prelim}
  Let $u^{(j)}$ for $0\leq j\leq 2$ be $C^2$ solutions of the elasticity system with boundary conditions $u^{(j)}=g^{(j)}$ on $\partial\Omega$.
  Let us define
  $
     u^\sharp =\left(
         \partial_1(\nabla\cdot u), \
         \partial_2(\nabla\cdot u), \
         \nabla\cdot u, \
         \nabla\cdot u
     \right)^T
  $
  and assume that
  \begin{enumerate}
   \item $\left\{ \mathfrak{u}^{(1)\sharp},  \mathfrak{u}^{(2)\sharp},  \mathfrak{u}^{(3)\sharp}\right\}$ are three linearly
independent vectors in $\Omega_0$, the neighborhood of $x_0$ in
$\Omega$.
   \item  $\left\{\beta_1(x), \beta_2(x)\right\}$ are linearly independent
   in $\Omega_0$.
  \end{enumerate}
  Then the reconstruction is stable in the sense that
\begin{align}
    \|\mu-\tilde\mu\|_{C(\Omega_0)}\leq C \left(|\mu(x_0^+)-\tilde\mu(x_0^+)|
    +\|H_{x_0}-\tilde H_{x_0}\|_{C^2(\Omega_0)}\right)
\end{align}
where $x_0^+ \in \partial\Omega_0$ and $H_{x_0}=(u^{(j)})_{0\leq j
\leq 2}$.
\end{lemma}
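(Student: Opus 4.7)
The plan is to turn the algebraic setup preceding the lemma into a first-order linear ODE system for $\mu$ and then run a Gr\"onwall-type comparison between the two parameter sets.

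First I would use Assumption (1) to solve for the coefficients $\Theta^l_j$: the linear independence of $\mathfrak{u}^{(1)\sharp},\mathfrak{u}^{(2)\sharp},\mathfrak{u}^{(3)\sharp}$ on $\Omega_0$ lets me invert the corresponding matrix by Cramer's rule, which expresses each $\Theta^l_j$ as a smooth rational function of the entries of the $u^{(j)\sharp}$, hence as a function of $H_{x_0}$ and its first and second derivatives. The elimination already carried out in the text then produces, for $l=1,2$,
\[
\beta_l\cdot\nabla\mu+\gamma_l\,\mu=R_l,\qquad R_l:=-k^2\Bigl(\mathfrak{u}_l^{(0)*}+\sum_{j=1}^{3}\Theta^l_j\,\mathfrak{u}^{(j)*}\Bigr),
\]
with $\beta_l,\gamma_l,R_l$ all determined by $H_{x_0}$ and derivatives thereof up to order two.

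Next I would combine these two scalar equations using Assumption (2). Let $B(x)$ be the $2\times 2$ matrix whose rows are $\beta_1(x)$ and $\beta_2(x)$; by hypothesis $B(x)$ is invertible on $\Omega_0$ with continuous inverse. Solving for $\nabla\mu$ gives the first-order linear system
\[
\nabla\mu=\Phi(x)\,\mu+\Psi(x),
\]
where $\Phi=-B^{-1}(\gamma_1,\gamma_2)^T$ and $\Psi=B^{-1}(R_1,R_2)^T$ are continuous vector fields on $\Omega_0$. Along any $C^1$ curve starting at $x_0^+\in\partial\Omega_0$ and remaining in $\overline{\Omega_0}$, the scalar map $t\mapsto\mu(x(t))$ obeys a scalar linear ODE with initial value $\mu(x_0^+)$, so classical ODE theory yields a unique solution with continuous dependence on the data.

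Finally, the stability bound follows by applying the same construction to $\tilde H_{x_0}$, producing $\tilde\Phi,\tilde\Psi$. Setting $w=\mu-\tilde\mu$ gives
\[
\nabla w=\Phi\,w+(\Phi-\tilde\Phi)\tilde\mu+(\Psi-\tilde\Psi),
\]
and because $\Theta^l_j,\beta_l,\gamma_l,R_l$ depend on $H_{x_0}$ through at most second derivatives one obtains
\[
\|\Phi-\tilde\Phi\|_{C(\Omega_0)}+\|\Psi-\tilde\Psi\|_{C(\Omega_0)}\le C\,\|H_{x_0}-\tilde H_{x_0}\|_{C^2(\Omega_0)}.
\]
Integrating along a family of curves sweeping out $\Omega_0$ from $x_0^+$ and applying Gr\"onwall's inequality then produces the advertised estimate.

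The step I expect to be the main obstacle is keeping uniform quantitative control of the two Cramer-rule inversions: I must verify that both the $3\times 3$ determinant extracted from $[\mathfrak{u}^{(1)\sharp},\mathfrak{u}^{(2)\sharp},\mathfrak{u}^{(3)\sharp}]$ (after accounting for the repeated entry in $u^\sharp$) and $\det B$ remain bounded away from zero, uniformly on $\Omega_0$, for the perturbed data as well. By continuity these determinants stay nonvanishing for $\tilde H$ sufficiently $C^2$-close to $H$, after possibly shrinking $\Omega_0$ slightly; the Lipschitz dependence of matrix inverses on their entries then supplies the quantitative bound on $\Phi-\tilde\Phi$ and $\Psi-\tilde\Psi$ needed to close the Gr\"onwall argument.
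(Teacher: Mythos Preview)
Your proposal is correct and follows essentially the same route as the paper: both invert the $\beta$-matrix to reduce to a first-order linear system $\nabla\mu+\Gamma\mu=\Phi$ with $(\Gamma,\Phi)$ depending on $H_{x_0}$ through at most second derivatives, subtract the corresponding equation for $(\tilde\mu,\tilde\Gamma,\tilde\Phi)$, and integrate along curves from the boundary point $x_0^+$. The only cosmetic difference is that the paper writes out the explicit variation-of-parameters formula for the scalar ODE along the curve, whereas you invoke Gr\"onwall; these yield the same estimate.
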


\begin{proof}
  Since $\left\{\beta_1(x), \beta_2(x)\right\}$ are linearly independent
   in $\Omega_0$, we can construct two vector-valued functions
   $\Gamma(x)$, $\Phi(x)\in C(\Omega_0)$ such that
   \begin{align}\label{7V1}
           \nabla \mu+\Gamma(x)\mu=\Phi(x) \ \mbox{in $\Omega_0$.}
   \end{align}
     Since $\mu$ and $\tilde\mu$ are solutions of (\ref{7V1}) with coefficients $(\Gamma, \Phi)$ and $(\tilde\Gamma,\tilde \Phi)$, respectively, we have
     $$
         \nabla(\mu-\tilde\mu)+\Gamma(x)(\mu-\tilde\mu)=\tilde\mu\left(\tilde\Gamma(x)-\Gamma(x)\right)+\left(\Phi(x)-\tilde\Phi(x)\right).
     $$
     Let $x\in \Omega_0$, there exists a integral curve $\psi(t)$ with $\psi(0)=x_0^+$ and
     $\psi(1)=x$.
%\begin{align}
%    \left\{
%       \begin{array}{ll}
%        \psi'(t)\cdot\nabla \mu+\Gamma(\psi(t))\cdot\psi'(t)\mu=\Phi(\psi(t))\cdot\psi'(t) \ \ \hbox{in $\Omega_0$}\\
%        \mu(x_0)=\tilde \mu(x_0),
%       \end{array}
%     \right.
%\end{align}
Thus %the expressions of $\mu-\tilde \mu$ is
\begin{align}\label{mu33}
    (\mu-\tilde\mu)(\psi(t))= &(\mu-\tilde\mu)(\psi(0))e^{-\int_0^t\Gamma(\psi(s))\cdot\psi'(s)ds}   \notag\\
    &+\int^t_0 \tilde\mu\left(\Gamma(\psi(s))-\tilde\Gamma(\psi(s))\right)+(\Phi-\tilde\Phi)(\psi(s))\cdot \psi'(s)ds,\ \ t\in [0,1].
\end{align}
Since $\Gamma$ and $\Phi$ only depend on $\Omega$ and $ u^{(j)},
j=0,1,2,$ and $u^{(j)}$ are in the class of $C^2(\overline\Omega)$,
the value of $|\Gamma-\tilde\Gamma|$ and $|\Phi-\tilde\Phi|$ are
bounded by the sum of $|\partial^\alpha
u^{(j)}-\partial^\alpha\tilde u^{(j)}|$ for $|\alpha|\leq 2$. There
is a constant $C$ such that
\begin{align*}
    |(\mu-\tilde\mu)(\psi(t))|
    \leq  C |(\mu-\tilde\mu)(\psi(0))|+C\|H_{x_0}-\tilde H_{x_0}\|_{C^{2}(\Omega_0)},\ \ t\in [0,1]
\end{align*}
for $\mu,\ \tilde\mu\in \mathcal{P}$. Thus, for any $x\in \Omega_0$,
the value $ |(\mu-\tilde\mu)(x)|$ is controlled by the internal data
and $\mu$ at the boundary point $x_0\in\partial\Omega_0$.

%On the other hand, if $x_0\in \Omega$,  following the similar
%argument as above we have the estimate (\ref{muuu}). This completes
%the proof.

\end{proof}

\subsubsection{Global reconstruction of $\mu$ in 2D} Let $(\lambda, \mu) \in\mathcal{P}$, by Lemma \ref{Eskin},
it implies that $r$ and $s$ are in $C^7$ and $O(\tau^{-1})\in H^5$.
Now we will show that how we can get three linear independent
vectors of the form $\mathfrak{u}^\sharp$. We plug the CGO solutions
$u_\rho=\mu^{-1/2}w_\rho+\mu^{-1}\nabla f_\rho-f_\rho\nabla
\mu^{-1}$ into $u^{\sharp}$. Then we have the expression
\begin{align}
u^{\sharp}_\rho=e^{i\rho\cdot x}\frac{\sqrt{\mu}}{\lambda+2\mu}\left(\left(
       \begin{array}{c}
         -\rho_1 (\rho\cdot r)+O(|\rho|)\\
         -\rho_2 (\rho\cdot r)+O(|\rho|)\\
         i\rho\cdot r \\
         i\rho\cdot r \\
       \end{array}
     \right)
     +O(1)\right)
\end{align}
by using the following equality which is part of the equation
(\ref{rs})
\begin{align}\label{s}
  -2\rho\cdot \nabla s
      =\frac{\lambda+\mu}{\lambda+2\mu} \mu^{1/2} \rho\cdot r.
\end{align}

Now we fix any point $x_0\in\overline\Omega$ and let $\rho=\tau(1,i)=-i\tilde\rho\in \mathbb{C}^2$ with $\tau>0$.
Since, in Lemma \ref{Eskin}, the matrix solution $C_0(x,\theta)$ is invertible, we can choose a constant vector $g^{(0)}$ such that $C_0(x,\theta)g^{(0)}=(r^{(0)},s^{(0)})^T$
with the conditions
\begin{align*}
s^{(0)}(x_0)=1,\ \ s^{(0)}\neq 0 \ \ \hbox{and}\ \ \rho\cdot r^{(0)}\neq 0
\end{align*}
in a neighborhood of $x_0$ in $B_R$, say $U$.
Then we have the CGO solution of the elasticity system, that is,
\begin{align*}
 u^{(0)}_\rho=\mu^{-1/2}w^{(0)}_\rho+\mu^{-1}\nabla f^{(0)}_\rho-f^{(0)}_\rho\nabla\mu^{-1},
\end{align*}
where
\begin{align*}
\left(\begin{array}{c}
   w^{(0)}_\rho \\
   f^{(0)}_\rho
\end{array}\right)
=e^{i\rho\cdot x}\left(\left(
                         \begin{array}{c}
                           r^{(0)} \\
                           s^{(0)}\\
                         \end{array}
                       \right)
+O(\tau^{-1})\right).
\end{align*}

Let $\theta=\rho/\tau, \tilde\theta=\tilde\rho/\tau$. Let $C_1(x,\theta)$ and $C_2(x,\tilde\theta)$  satisfy the following two equations
$$
        i\theta\cdot \frac{\partial}{\partial x}C_1(x,\theta)=\theta \cdot V_1(x)C_1(x,\theta),\ \ \ i\tilde\theta\cdot \frac{\partial}{\partial x}C_2(x,\tilde\theta)=\tilde\theta \cdot V_1(x)C_2(x,\tilde\theta),
$$
respectively.
Since $\tilde\rho=i\rho$, we can choose $C_2(x,\tilde\theta)=C_1(x,\theta)$. Moreover,
with suitable constant vector $g$, we can get that $r^{(2)}=r^{(1)}$ and $s^{(2)}=s^{(1)}$ and
\begin{align*}
 s^{(l)}(x_0)=0\ ,\ \
 r^{(1)}(x_0)=(1,-i)=r^{(2)}(x_0).
\end{align*}
By continuity of $r^{(l)}$, we have $\rho\cdot r^{(l)}\neq 0$ in a neighborhood $U$ of $x_0$.
Then the CGO solutions are
\begin{align*}
 u_\rho^{(l)}=\mu^{-1/2}w_\rho^{(l)}+\mu^{-1}\nabla f_\rho^{(l)}-f_\rho^{(l)}\nabla\mu^{-1},
\end{align*}
where
\begin{align*}
\left(\begin{array}{c}
   w_\rho^{(1)} \\
   f_\rho^{(1)}
\end{array}\right)
=e^{i\rho\cdot x}\left(\left(
                         \begin{array}{c}
                           r^{(1)} \\
                           s^{(1)}\\
                         \end{array}
                       \right)
+O(\tau^{-1})\right),\ \left(\begin{array}{c}
   w_\rho^{(2)} \\
   f_\rho^{(2)}
\end{array}\right)
=e^{i\tilde\rho\cdot x}\left(\left(
                         \begin{array}{c}
                           r^{(2)} \\
                           s^{(2)}\\
                         \end{array}
                       \right)
+O(\tau^{-1})\right).
\end{align*}
So far we have three CGO solutions, that is, $u_\rho^{(0)},\ u_\rho^{(1)}$, and $u_\rho^{(2)}$.

We consider
\begin{align*}
    e^{-i\rho\cdot x}|\rho|^{-2}u^{(1)\sharp}_\rho=& \frac{1}{2}\left(\begin{array}{c}
         -\frac{\sqrt{\mu}}{\lambda+2\mu}(r^{(1)}_1+ir^{(1)}_2) \\
         -\frac{\sqrt{\mu}}{\lambda+2\mu}(ir^{(1)}_1-r^{(1)}_2)\\
         0 \\
         0 \\
       \end{array}
       \right)
     +O(\tau^{-1}),
\end{align*}
\begin{align*}
    e^{-i\rho\cdot x}|\rho|^{-1}u^{(1)\sharp}_\rho=& \frac{1}{\sqrt{2}}\left(\begin{array}{c}
         -\frac{\sqrt{\mu}}{\lambda+2\mu}\tau(r^{(1)}_1+ir^{(1)}_2)+O(1)\\
         -\frac{\sqrt{\mu}}{\lambda+2\mu}\tau(ir^{(1)}_1-r^{(1)}_2)+O(1)\\
         i\frac{\sqrt{\mu}}{\lambda+2\mu}( r^{(1)}_1+ir^{(1)}_2) \\
         i\frac{\sqrt{\mu}}{\lambda+2\mu}( r^{(1)}_1+ir^{(1)}_2) \\
       \end{array}
       \right)
     +O(\tau^{-1})
\end{align*}
and
\begin{align*}
   e^{-i\tilde\rho\cdot x}|\tilde\rho|^{-1}u^{(2)\sharp}_\rho=& \frac{1}{\sqrt{2}}\left(\begin{array}{c}
         -\frac{\sqrt{\mu}}{\lambda+2\mu}\tau(-r^{(2)}_1-ir^{(2)}_2)+O(1) \\
         -\frac{\sqrt{\mu}}{\lambda+2\mu}\tau(-ir^{(2)}_1+r^{(2)}_2)+O(1) \\
         i\frac{\sqrt{\mu}}{\lambda+2\mu}( ir^{(2)}_1-r^{(2)}_2) \\
         i\frac{\sqrt{\mu}}{\lambda+2\mu}( ir^{(2)}_1-r^{(2)}_2) \\
       \end{array}
       \right)
     +O(\tau^{-1}).
\end{align*}
%Let $\mathfrak{u}^2_\rho:=\mathfrak{u}^{2,1}_\rho+\mathfrak{u}^{2,2}_\rho$.
Since $r^{(2)}=r^{(1)}$ and $s^{(2)}=s^{(1)}$, we have
\begin{align*}
     e^{-i\rho\cdot x}|\rho|^{-1}u^{(1)\sharp}_\rho+e^{-i\tilde\rho\cdot x}|\tilde\rho|^{-1}u^{(2)\sharp}_\rho= \frac{1}{\sqrt{2}}\left(\begin{array}{c}
         O(1) \\
         O(1) \\
         i\frac{\sqrt{\mu}}{\lambda+2\mu}( (1+i)r^{(1)}_1+(-1+i)r^{(1)}_2) \\
         i\frac{\sqrt{\mu}}{\lambda+2\mu}((1+i)r^{(1)}_1+(-1+i)r^{(1)}_2) \\
       \end{array}
       \right)
     +O(\tau^{-1}).
\end{align*}
We define
\begin{align*}
\mathfrak{u}_{1\rho}^{(0)\natural}= \Re e^{-i\rho\cdot
x}|\rho|^{-2}u^{(0)\natural}_\rho;\ \
\mathfrak{u}_{2\rho}^{(0)\natural}=\Im
e^{-i\rho\cdot x}|\rho|^{-2}u^{(0)\natural}_\rho;\ \ \natural=\sharp,\flat,*
\end{align*}
$$
\mathfrak{u}^{(1)\natural}_\rho= \Re e^{-i\rho\cdot
x}|\rho|^{-2}u^{(1)\natural}_\rho;
\ \
\mathfrak{u}^{(2)\natural}_\rho= \Im e^{-i\rho\cdot
x}|\rho|^{-2}u^{(1)\natural}_\rho;
$$
and
$$
\mathfrak{u}^{(3)\natural}_\rho= \Re \left(e^{-i\rho\cdot
x}|\rho|^{-1}u^{(1)\natural}_\rho+   e^{-i\tilde\rho\cdot
x}|\tilde\rho|^{-1}u^{(2)\natural}_\rho\right).
$$
%$$
%    \mathfrak{u}^{(1)\sharp}=\Re{\mathfrak{u^1_\rho}},\ \ \mathfrak{u}^{(2)\sharp}=\Im{\mathfrak{u^1_\rho}},
%    \ \ \mathfrak{u}^{(3)\sharp}=\Re{\mathfrak{u^2_\rho}}.
%$$
Then $\left\{ \mathfrak{u}^{(1)\sharp}_\rho, \mathfrak{u}^{(2)\sharp}_\rho,
\mathfrak{u}^{(3)\sharp}_\rho \right\}$ are linearly independent in a small
neighborhood $U$ of $x_0$ when $\tau$ is sufficiently large.

Therefore, for $l=1,2$, there exist functions $\Theta^l_j,\
j=1,2,3$ such that
\begin{align}\label{cond1}
\mathfrak{u}_{l\rho}^{(0)\sharp}+\sum^{3}_{j=1}\Theta^l_j\mathfrak{u}^{(j)\sharp}_\rho =0,\ \ l=1,2.
\end{align}
Since $u_\rho^{(0)},\ u_\rho^{(1)}$, and $u_\rho^{(2)}$ are
solutions of the equation (\ref{equ11}), we have the following
equations:
\begin{align*}
\mathfrak{u}_{l\rho}^{(0)\sharp} \cdot F+\mathfrak{u}_{l\rho}^{(0)\flat}\cdot G=-k^2\mathfrak{u}_{l\rho}^{(0)*},\ \ l=1,2;\\
\mathfrak{u}^{(j)\sharp}_\rho \cdot F+\mathfrak{u}^{(j)\flat}_\rho\cdot G=-k^2\mathfrak{u}^{(j)*}_\rho,\ \ j=1,2,3.
\end{align*}

%\begin{align*}
 %   e^{-i\rho\cdot x}|\rho|^{-2}u^{(0)\sharp}_\rho \cdot F+  e^{-i\rho\cdot x}|\rho|^{-2} u^{(0)\flat}_\rho\cdot G&= -k^2 e^{-i\rho\cdot x}|\rho|^{-2}u^{(0)*}_\rho,\\
 %     \mathfrak{u^1_\rho} \cdot F+  e^{-i\rho\cdot x}|\rho|^{-2}u^{(1)\flat}_\rho\cdot G&= -k^2 e^{-i\rho\cdot x}|\rho|^{-2}u^{(1)*}_\rho,\\
 %    \mathfrak{u^2_\rho} \cdot F+ (e^{-i\rho\cdot x}|\rho|^{-1}u^{(1)\flat}_\rho+   e^{-i\tilde\rho\cdot x}|\tilde\rho|^{-1}u^{(2)\flat}_\rho)\cdot G
 %   &= -k^2(e^{-i\rho\cdot x}|\rho|^{-1}u^{(1)*}_\rho+  e^{-i\tilde\rho\cdot x}|\tilde\rho|^{-1}u^{(2)*}_\rho).
%\end{align*}
Summing over the equations above and using (\ref{cond1}), we have
two equations
\begin{align}\label{refB}
     \beta_{\rho,l}\cdot \nabla\mu
     +
     \gamma_{\rho,l} \mu
=-k^2 \left(\mathfrak{u}_{l\rho}^{(0)*}+\sum^3_{j=1}\Theta^l_j\mathfrak{u}^{(j)*}_\rho\right),\ \ \ \ \ \ \hbox{ for $l=1,2$,}
\end{align}
where
\begin{align*}
     \beta_{\rho,l}=(\mathrm{v}_l \cdot e_1) \tilde e_1+ (\mathrm{v}_l \cdot e_2)\tilde e_2
\end{align*}
and
\begin{align*}
     \gamma_{\rho,l}=\mathrm{v}_l \cdot e_3 + \mathrm{v}_l \cdot e_4 ,\\
\end{align*}
Here we define $
\mathrm{v}_l=\mathfrak{u}_{l\rho}^{(0)\flat}+\sum^3_{j=1}\Theta^l_j\mathfrak{u}^{(j)\flat}_\rho.
$

%Also, we denote $\tilde u_{\rho,1}$ and $\tilde u_{\rho,2}$ by
%\begin{align*}
%     \tilde u_{\rho,1}=&\Re e^{-i\rho\cdot x}|\rho|^{-2} u^{(0)*}_\rho+\theta_1 \Re e^{-i\rho\cdot x}|\rho|^{-2}u^{(1)*}_\rho+\theta_2 \Im e^{-i\rho\cdot x}|\rho|^{-2}u^{(1)*}_\rho\\
%&+\theta_3\Re(e^{-i\rho\cdot x}|\rho|^{-1}u^{(1)*}_\rho+   e^{-i\tilde\rho\cdot x}|\tilde\rho|^{-1}u^{(2)*}_\rho),\\
%     \tilde u_{\rho,2}=&\Im e^{-i\rho\cdot x}|\rho|^{-2} u^{(0)*}_\rho+\vartheta_1 \Re e^{-i\rho\cdot x}|\rho|^{-2}u^{(1)*}_\rho+\vartheta_2 \Im e^{-i\rho\cdot x}|\rho|^{-2}u^{(1)*}_\rho\\
%&+\vartheta_3\Re(e^{-i\rho\cdot x}|\rho|^{-1}u^{(1)*}_\rho+   e^{-i\tilde\rho\cdot x}|\tilde\rho|^{-1}u^{(2)*}_\rho).
%\end{align*}

\begin{remark}\label{rk}
By choosing suitable $g^{(0)}$, $|\Theta^l_3(x_0)|$ can be as small as we want. To show that, we choose a new constant vector $\hat{g}^{(0)}$, instead of the original $g^{(0)}$, such that $(\hat{r}^{(0)}, \hat{s}^{(0)})^T=C_0(x,\theta)\hat{g}^{(0)}(\theta\cdot x)$ where $\hat{r}^{(0)}(x_0)=r^{(0)}(x_0)/M,\ M>0,\ \hat{s}^{(0)}(x_0)=s^{(0)}(x_0)$ and $(\hat{r}^{(0)}, \hat{s}^{(0)})^T$ satisfies the original assumption, that is, $\hat{s}^{(0)}(x_0)=1$ and $\hat{s}^{(0)}\neq 0$ and $\rho\cdot \hat{r}^{(0)}\neq 0$ in a neighborhood of $x_0$. Then
\begin{align*}
     \Theta^l_3(x_0)=-P^{3j}\Re\left(\frac{1}{2}\left(\begin{array}{c}
         -\frac{\sqrt{\mu}}{\lambda+2\mu}(1,i)\cdot r^{(0)}(x_0)\\
         -\frac{\sqrt{\mu}}{\lambda+2\mu}i(1,i)\cdot r^{(0)}(x_0)\\
         0 \\
         0 \\
       \end{array}
       \right)  +O(\tau^{-1})   \right)\cdot p_j
\end{align*}
       and
\begin{align*}
        \hat\Theta^l_3(x_0)=-P^{3j} \Re\left(\frac{1}{2M}\left(\begin{array}{c}
         -\frac{\sqrt{\mu}}{\lambda+2\mu}(1,i)\cdot r^{(0)}(x_0)\\
         -\frac{\sqrt{\mu}}{\lambda+2\mu}i(1,i)\cdot r^{(0)}(x_0)\\
         0 \\
         0 \\
       \end{array}
       \right)  +O(\tau^{-1})   \right)\cdot p_j,
\end{align*}
where $P=(p_{ij})$ and $P^{-1}=(P^{ij})$ with $p_{ij}= p_i\cdot p_j$. Note that $P$ is a boundedly invertible symmetric matrix. Here $p_1=\mathfrak{u}^{(1)\sharp}_\rho,\ p_2=\mathfrak{u}^{(2)\sharp}_\rho,\ p_3=\mathfrak{u}^{(3)\sharp}_\rho$.
From above, we obtain that $\hat\Theta^l_3$ is close to $\Theta^l_3/M$ as $\tau$ is large.
Therefore, the new $|\hat{\Theta}^l_3(x_0)|$ is small when $M$ and $\tau$ is sufficiently large.
\end{remark}

\begin{lemma}\label{beta11}
Given any point $x_0\in \overline\Omega$, there exists a small neighborhood $V$ of $x_0$ such that $\beta_{\rho,j}$ is not zero in $V$ for $j=1,2$.
\end{lemma}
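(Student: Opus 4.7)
The plan is to compute the leading-order asymptotic of $\beta_{\rho,l}$ at $x_0$ as $\tau \to \infty$, use the freedom provided by Remark \ref{rk} to suppress the only potentially cancelling contribution, and then extend the conclusion to a neighborhood of $x_0$ by continuity.

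I would first expand the first two entries of $u_\rho^{(j)\flat}$, namely $(a+b)^{(j)}$ and $(a-b)^{(j)}$, using the CGO ansatz $u_\rho^{(j)} = \mu^{-1/2} w_\rho^{(j)} + \mu^{-1} \nabla f_\rho^{(j)} - f_\rho^{(j)} \nabla \mu^{-1}$. The top $\tau$-order term comes from the second derivatives of $f_\rho^{(j)}$, giving $(a \pm b)^{(j)} = e^{i\rho \cdot x}\left[ -\mu^{-1}\bigl(2\rho_1 \rho_2 \pm (\rho_1^2 - \rho_2^2)\bigr) s^{(j)} + O(\tau)\right]$, whose $O(\tau^2)$ coefficients, for $\rho = \tau(1, i)$, become nonzero multiples of $(1+i)$ and $(1-i)$ respectively. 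Multiplying by $e^{-i\rho \cdot x}|\rho|^{-2}$ and taking real or imaginary parts, the first two components of $\mathfrak{u}_{l\rho}^{(0)\flat}(x_0)$ converge to nonzero real multiples of $\mu^{-1}(x_0)s^{(0)}(x_0) = \mu^{-1}(x_0)$, while the first two components of $\mathfrak{u}^{(1)\flat}_\rho(x_0)$ and $\mathfrak{u}^{(2)\flat}_\rho(x_0)$ are of size $O(\tau^{-1})$, since $s^{(1)}(x_0) = s^{(2)}(x_0) = 0$ annihilates the leading $\tau^2$ term and the surviving $O(\tau)$ pieces are then divided by $|\rho|^{2} = 2\tau^2$.

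The remaining vector $\mathfrak{u}^{(3)\flat}_\rho$ is normalized only by $|\rho|^{-1}$, so its first two components at $x_0$ are generically $O(1)$ and could cancel the contribution from $\mathfrak{u}_{l\rho}^{(0)\flat}(x_0)$ if $|\Theta^l_3(x_0)|$ were bounded away from zero. This is precisely the obstacle that Remark \ref{rk} addresses: replacing the original $g^{(0)}$ by $\hat g^{(0)}$ with $\hat r^{(0)}(x_0) = r^{(0)}(x_0)/M$, while preserving $\hat s^{(0)}(x_0) = 1$, makes $|\hat\Theta^l_3(x_0)|$ shrink like $1/M$. Since $\Theta^l_1, \Theta^l_2$ are bounded but multiply only the $O(\tau^{-1})$ first-two-component parts of $\mathfrak{u}^{(1)\flat}_\rho(x_0)$ and $\mathfrak{u}^{(2)\flat}_\rho(x_0)$, choosing first $M$ large and then $\tau$ large guarantees that the first two components of $\mathrm{v}_l(x_0)$ are dominated by those of $\mathfrak{u}_{l\rho}^{(0)\flat}(x_0)$. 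Hence $\beta_{\rho, l}(x_0) \neq 0$ for both $l = 1, 2$.

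Finally, continuity in $x$ of the CGO amplitudes $(r^{(j)}, s^{(j)}) \in C^7(B_R)$, of the $O(\tau^{-1})$ remainders in $H^5$ (hence $C^3$ by Sobolev embedding), and of the coefficients $\Theta^l_j$ (solutions of a $3 \times 3$ linear system whose matrix is invertible at $x_0$ by the established linear independence of $\mathfrak{u}^{(1)\sharp}_\rho, \mathfrak{u}^{(2)\sharp}_\rho, \mathfrak{u}^{(3)\sharp}_\rho$) implies that $\beta_{\rho, l}(x)$ is continuous in $x$ near $x_0$, so its nonvanishing at $x_0$ propagates to a small neighborhood $V$. The main technical obstacle I expect is the careful $\tau$-bookkeeping in the Leibniz expansion of $\partial_k u^{(1)}_j$ at $x_0$: one must verify that the cross-term $i\mu^{-1/2}(\rho_2 r^{(1)}_1 + \rho_1 r^{(1)}_2)|_{x_0}$ (and its analogue in $b$) vanishes thanks to the choice $r^{(1)}(x_0) = (1, -i)$, so that no hidden $O(\tau)$ factor reinstates $\mathfrak{u}^{(1)\flat}_\rho(x_0)$ to $O(1)$ size.
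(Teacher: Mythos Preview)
Your proposal is correct and follows essentially the same route as the paper: compute the leading $\tau$-asymptotics of the first two components of each $\mathfrak{u}^{(j)\flat}_\rho(x_0)$, use $s^{(1)}(x_0)=s^{(2)}(x_0)=0$ to make the $j=1,2$ contributions $O(\tau^{-1})$, invoke Remark~\ref{rk} to shrink $\Theta^l_3(x_0)$ so the $j=3$ term cannot cancel the $j=0$ term, and conclude by continuity. Your closing worry about a ``hidden $O(\tau)$ factor'' is unnecessary, since those terms are divided by $|\rho|^2=2\tau^2$ and hence are $O(\tau^{-1})$ automatically; the condition $(1,-i)\cdot r^{(1)}(x_0)=0$ is used in the paper only to simplify the explicit form of the first two components of $\mathfrak{u}^{(3)\flat}_\rho(x_0)$, not to rescue the $j=1,2$ estimates.
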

Note that we denote by $\sim$ equalities up to terms which are asymptotically negligible as $\tau$ goes to infinity.
\begin{proof}
     It is sufficiently to prove that $\beta_{\rho,1}$ does not vanish in some neighborhood of $x_0$. Recall that
\begin{align}\label{betarho}
    \beta_{\rho,1}=(\mathrm{v}_1 \cdot e_1) \tilde e_1+ (\mathrm{v}_1 \cdot e_2)\tilde e_2,
\end{align}
where
\begin{align*}
     \mathrm{v}_1=\mathfrak{u}_{1\rho}^{(0)\flat}+\sum^3_{j=1}\Theta^1_j\mathfrak{u}^{(j)\flat}_\rho.
\end{align*}
By (\ref{cond1}), we have that $\Theta^1_j\sim \tau^0$ for $j=1,2,3$.
%Since $\mathrm{v^{\Re}}$ is order $0$ as $\tau$ goes to infinity,
Since $s^{(1)}(x_0)=0$, we get that
\begin{align}\label{g}
   \left(\left(\mathfrak{u}^{(1)\flat}_\rho\cdot e_1\right) \tilde e_1+\left(\mathfrak{u}^{(1)\flat}_\rho\cdot e_2\right) \tilde e_2\right)(x_0)   \sim \tau^{-1}
\end{align}
and
\begin{align}\label{gg}
   \left(\left(\mathfrak{u}^{(2)\flat}_\rho\cdot e_1\right) \tilde e_1+\left(\mathfrak{u}^{(2)\flat}_\rho\cdot e_2\right) \tilde e_2\right)(x_0)   \sim \tau^{-1}.
\end{align}
Since $s^{(k)}(x_0)=0$, $r^{(2)}=r^{(1)}$, $s^{(2)}=s^{(1)}$, and $(1,-i)\cdot r^{(1)}(x_0)=0$, we obtain that
\begin{align}\label{ggg}
     &\left(      \left(\mathfrak{u}^{(3)\flat}_\rho\cdot e_1\right)\tilde e_1 +\left(\mathfrak{u}^{(3)\flat}_\rho\cdot e_2\right)\tilde e_2   \right)(x_0)   \notag\\
     =&\ 4\mu^{-1}\Re \frac{1}{\sqrt{2}}    \left(\left(
       \begin{array}{c}
        i\partial_1 s^{(1)}+\partial_2 s^{(1)} \\
        -\partial_1 s^{(1)}+i\partial_2 s^{(1)} \\
       \end{array}
     \right)
     +O(\tau^{-1})\right)(x_0).
\end{align}

Combining from (\ref{betarho}) to (\ref{ggg}), since $\Theta^1_3(x_0)$ can be taken as small as we want (See Remark 2) in the construction of CGO solutions above, it follows that
\begin{align*}
\beta_{\rho,1}(x_0)\sim \mu^{-1} \left(\left(
       \begin{array}{c}
         2 \\
         -2 \\
       \end{array}
     \right)
     +
      \Theta^1_3\Re
      \left(\begin{array}{c}
         \frac{4}{\sqrt{2}}(i\partial_1 s^{(1)}+\partial_2 s^{(1)}) \\
         \frac{4}{\sqrt{2}}(-\partial_1 s^{(1)}+i\partial_2 s^{(1)}) \\
       \end{array}
     \right)\right)(x_0)
     \neq 0.
\end{align*}
Similarly,
\begin{align*}
\beta_{\rho,2}(x_0)\sim \mu^{-1} \left(\left(
       \begin{array}{c}
         2 \\
         2 \\
       \end{array}
     \right)
     +
      \Theta^2_3\Re
      \left(\begin{array}{c}
         \frac{4}{\sqrt{2}}(i\partial_1 s^{(1)}+\partial_2 s^{(1)}) \\
         \frac{4}{\sqrt{2}}(-\partial_1 s^{(1)}+i\partial_2 s^{(1)}) \\
       \end{array}
     \right)\right)(x_0)
     \neq 0.
\end{align*}
By continuity of $\beta_{\rho,j}$, we complete the proof.
\end{proof}

Let $g^{(j)}_\rho=u_\rho^{(j)}|_{\partial\Omega}$ be the given boundary data. By Lemma \ref{Eskin}, since $(\lambda,\mu)\in \mathcal{P}$, we knew that $u_\rho ^{(j)}\in H^4(B_R)$. Let $g^{(j)}\in C^{1,\alpha}(\partial\Omega)$ be the boundary data close to $g^{(j)}_\rho $, that is,
\begin{align*}
\|g^{(j)}-g^{(j)}_\rho\|_{C^{1,\alpha}(\partial\Omega)}< \varepsilon\ \ \ \hbox{for some $\varepsilon>0$},
\end{align*}
then we can find solutions $u^{(j)}$ of the elasticity system with boundary data $g^{(j)}$(the existence of such solutions, see e.g. Ch.4 of \cite{M}). By elliptic regularity theorem, we have
\begin{align}\label{uuu}
    \|u^{(j)}-u^{(j)}_\rho\|_{C^{2}(\overline\Omega)}
      <C \varepsilon
\end{align}
for some constant $C$ which is independent of $\lambda, \mu$. Then we obtain that
\begin{align*}
     \|\mathfrak{u}^{(j)\sharp}-\mathfrak{u}^{(j)\sharp}_\rho\|_{C^0(\overline\Omega)}< C \varepsilon.
\end{align*}

Here the notation $\mathfrak{u}^{(j)\sharp}$ is constructed in the same way as $\mathfrak{u}^{(j)\sharp}_\rho$ with the CGO solutions $u^{(j)}_\rho$ replaced by the solutions $u^{(j)}.$
Therefore, $\left\{\mathfrak{u}^{(1)\sharp},\mathfrak{u}^{(2)\sharp},\mathfrak{u}^{(3)\sharp}\right\}$ are also linearly independent when $\varepsilon$ is sufficiently small.

We construct $\beta_j$ as in the equation (\ref{refB}) with $u^{(j)}_\rho$ replaced by $u^{(j)}, j=0,1,2$.
Therefore, by the definitions of $\beta_j $ and $\beta_{\rho,j}$ and (\ref{uuu}), it follows that
$$
\|\beta_j-\beta_{\rho,j}\|_{C^1(\overline\Omega)}
$$
is small when $\varepsilon$ is small.
Since $\beta_{\rho,j}$ is not zero in $V$ by Lemma \ref{beta11}, we can deduce that $\beta_j$ is also not zero in $V$ if $\varepsilon$ is small enough and $\tau$ is sufficiently large.
Moreover, with the suitable chosen CGO solutions $u_\rho$,  $\{\beta_{\rho,1}, \beta_{\rho,2}\}$ are linearly independent in $V$ as $\tau$ is sufficiently large (See the proof of Lemma \ref{beta11}). When $\varepsilon$ is sufficiently small, it implies that $\{\beta_1,\beta_2\}$ are also linearly independent in $V$. Then we have the following equations:
\begin{align}\label{muj}
     \beta_l\cdot\nabla\mu+\gamma_l\mu=-k^2 \left(\mathfrak{u}_{l}^{(0)*}+\sum^3_{j=1}\Theta^l_j\mathfrak{u}^{(j)*}\right),\ \  l=1,2
\end{align}
with $\{\beta_1,\beta_2\}$ a basis in $\mathbb{R}^2$ for every point $x\in \Omega_0$. Here we denote $\Omega_0=U\cap V \cap \Omega$.

Thus, there exists an invertible matrix $A=(a_{ij})$ such that $\beta_l=\sum a_{lk}\tilde e_k$ with inverse of class $C^0(\Omega)$. Thus, we have constructed two vector-valued functions $\Gamma(x),\ \Phi(x)\in C(\Omega)$ such that (\ref{muj}) can be rewritten as
\begin{align}\label{V1}
     \nabla\mu+\Gamma(x)\mu=\Phi(x)\ \ \ \ \ \hbox{in\ $\Omega_0$}.
\end{align}
Then we obtain the following result:
\begin{theorem}\label{mulocal}
Suppose that $(\lambda, \mu)$ and $(\tilde\lambda,\tilde\mu)\in \mathcal{P}$. For any fixed $x_0\in \partial\Omega$, let $u^{(j)}_\rho$ be the corresponding CGO solutions for $(\lambda,\mu)$ and $u^{(j)}$ constructed as above with internal data $H_{x_0}=(u^{(j)})_{0\leq j\leq 2}$ and with $\varepsilon$ sufficiently small. Let $\tilde H_{x_0}$ be constructed similarly with the parameters $(\tilde\lambda,\tilde\mu)$. Assume that $\mu|_{\partial\Omega}=\tilde\mu|_{\partial\Omega}$.

Then $H_{x_0}=\tilde H_{x_0}$ implies that $ \mu=\tilde\mu$ in $\Omega_0$, the neighborhood of $x_0$ in $\Omega$.
\end{theorem}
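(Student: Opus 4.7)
The plan is to reduce the uniqueness claim to a zero Cauchy problem for the first-order transport system (\ref{V1}) and then to apply the integral-curve argument from Lemma \ref{prelim}. The essential observation is that once the boundary data $g^{(j)}$ (close to the CGO boundary values $g^{(j)}_\rho$) are fixed and $H_{x_0} = \tilde H_{x_0}$, the two parameter choices produce the same auxiliary vectors $\mathfrak{u}^{(j)\sharp}$, $\mathfrak{u}^{(j)\flat}$, $\mathfrak{u}^{(j)*}$, since by (\ref{sharp}) each depends only on $u^{(j)}$ and its first two derivatives. Consequently the scalars $\Theta^l_j$ selected by the linear-independence relation (\ref{cond1}), the vector fields $\beta_l$, $\gamma_l$, and the scalar sources on the right-hand side of (\ref{muj}) all coincide for $(\lambda,\mu)$ and $(\tilde\lambda,\tilde\mu)$.

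With this in hand, I would run the derivation of (\ref{muj}) first with $(\lambda,\mu)$ and then with $(\tilde\lambda,\tilde\mu)$, yielding
\[
\beta_l\cdot\nabla\mu+\gamma_l\mu=R_l,\qquad \beta_l\cdot\nabla\tilde\mu+\gamma_l\tilde\mu=R_l,\qquad l=1,2,
\]
with identical $\beta_l, \gamma_l, R_l$. By the perturbation argument immediately preceding the theorem statement, $\{\beta_1,\beta_2\}$ is a basis at every point of $\Omega_0$ provided $\varepsilon$ is small and $\tau$ is large; inverting this frame puts the system in the form (\ref{V1}) with one common pair $(\Gamma,\Phi)$ shared by both parameter sets.

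Subtracting, the difference $w:=\mu-\tilde\mu$ therefore satisfies the homogeneous transport equation $\nabla w+\Gamma(x)w=0$ in $\Omega_0$. Because $x_0\in\partial\Omega$ and $\Omega_0$ is a neighborhood of $x_0$ in $\Omega$, I may choose $x_0^+\in\partial\Omega_0\cap\partial\Omega$ at which $w(x_0^+)=0$ by the standing assumption $\mu|_{\partial\Omega}=\tilde\mu|_{\partial\Omega}$. Following any smooth curve $\psi:[0,1]\to\overline{\Omega_0}$ with $\psi(0)=x_0^+$ and $\psi(1)=x$ and specializing (\ref{mu33}) to $\tilde\Gamma=\Gamma$, $\tilde\Phi=\Phi$ gives
\[
w(x)=w(x_0^+)\,\exp\!\Bigl(-\int_0^1\Gamma(\psi(s))\cdot\psi'(s)\,ds\Bigr)=0,
\]
so $\mu=\tilde\mu$ throughout $\Omega_0$. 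Equivalently, this is the stability bound of Lemma \ref{prelim} specialized to $H_{x_0}=\tilde H_{x_0}$ and matching boundary values.

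The step I expect to require the most care is the first one: certifying that $\Theta^l_j$, $\beta_l$, $\gamma_l$, and hence $\Gamma$ and $\Phi$, really depend only on the common internal data $H_{x_0}$ and not on the parameters that generated the solutions. This is automatic from the explicit formulas but demands careful bookkeeping through (\ref{equ11})--(\ref{cond1}) to confirm that nothing beyond $(u^{(j)})_{0\leq j\leq 2}$ and its derivatives up to second order enters the coefficients of the transport system. Once this is verified, the remainder of the argument is a direct ODE uniqueness statement along the integral curves emanating from $\partial\Omega_0\cap\partial\Omega$.
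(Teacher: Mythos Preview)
Your argument is correct and follows essentially the same route as the paper: both observe that $H_{x_0}=\tilde H_{x_0}$ forces $\mu$ and $\tilde\mu$ to satisfy the same transport system (\ref{V1}) with identical $(\Gamma,\Phi)$, and then conclude equality by integrating along curves from a boundary point where $\mu=\tilde\mu$. The paper is slightly more concrete in that it uses the straight segment $\psi(t)=(1-t)x_0+tx$ from the center $x_0\in\partial\Omega$ of the ball $\Omega_0$ rather than a general curve from some $x_0^+\in\partial\Omega_0\cap\partial\Omega$, but this is only a cosmetic difference.
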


\begin{proof}
Based on the construction above, the domain $\Omega_0$ can be taken as a small open ball with center $x_0$ and $\Omega_0\subset B_R$. Since $H_{x_0}=\tilde H_{x_0}$, we have that $\mu $ and $\tilde\mu$ solve the same equation
$\nabla\mu+\Gamma(x)\mu=\Phi(x)$ in $\Omega_0$ where the functions $\Gamma$ and $\Phi$ depend on $u^{(j)}$.
Let $x\in \Omega_0$ and denote $\psi(t)=(1-t)x_0+tx,\ t\in[0,1]$. Restricted to this curve, we have
\begin{align}\label{solmu}
    \left\{
       \begin{array}{ll}
        \psi'(t)\cdot\nabla \mu+\Gamma(\psi(t))\cdot\psi'(t)\mu=\Phi(\psi(t))\cdot\psi'(t) \ \ \hbox{in $\Omega_0$}\\
        \mu(x_0)=\tilde \mu(x_0),
       \end{array}
     \right.
\end{align}
The solution of (\ref{solmu}) is given by
\begin{align*}
    \mu(\psi(t))=\mu(\psi(0))e^{-\int_0^t\Gamma(\psi(s))\cdot\psi'(s)ds}+\int^t_0 \Phi(\psi(s))\cdot \psi'(s)ds,\ \ \ \ \ t\in [0,1].
\end{align*}
The solution $\tilde\mu(x)$ is given by the same formula since $\mu|_{\partial\Omega}=\tilde\mu|_{\partial\Omega}$ so that $\mu=\tilde \mu$ in $\Omega_0$.
\end{proof}

We have constructed $\left\{\mathfrak{u}^{(1)\sharp},\mathfrak{u}^{(2)\sharp},\mathfrak{u}^{(3)\sharp}\right\}$ are linearly independent and $\{\beta_1,\beta_2\}$ forms a basis in $\mathbb{R}^2$ for every point $x\in \Omega_0$ when $\varepsilon$ is sufficiently small and $\tau$ is large. Applying Lemma \ref{prelim}, we have
\begin{theorem}\label{mu}
Suppose that $(\lambda, \mu)$ and $(\tilde\lambda,\tilde\mu)\in \mathcal{P}$. For any fixed $x_0\in \overline\Omega$, let $u^{(j)}_\rho$ be the corresponding CGO solutions for $(\lambda,\mu)$ and $u^{(j)}$ constructed as above with internal data $H_{x_0}=(u^{(j)})_{0\leq j\leq 2}$ and with $\varepsilon$ sufficiently small. Let $\tilde H_{x_0}=(\tilde u^{(j)})_{0\leq j\leq 2}$ be constructed similarly for the parameters $(\tilde\lambda,\tilde\mu)$ with $u^{(j)}|_{\partial\Omega}=\tilde u^{(j)}|_{\partial\Omega}$.
Then there exists an open neighborhood $\Omega_0$ of $x_0$ in $\Omega$ such that
\begin{align}\label{muu}
     \|\mu-\tilde \mu\|_{C(\Omega_0)}\leq C \left(|\mu(x_0)-\tilde \mu(x_0)|+\|H_{x_0}-\tilde H_{x_0}\|_{C^{2}(\Omega_0)}\right),\ \ \ x_0\in \partial\Omega.
\end{align}
and
\begin{align}\label{muuu}
     \|\mu-\tilde \mu\|_{C(\Omega_0)}\leq C \left(|\mu(x^+_0)-\tilde \mu(x^+_0)|+\|H_{x_0}-\tilde H_{x_0}\|_{C^{2}(\Omega_0)}\right),\ \ \ x^+_0\in \partial\Omega_0,\ x_0\in \Omega.
\end{align}

\end{theorem}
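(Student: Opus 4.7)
The plan is to apply Lemma \ref{prelim} with the CGO-based ingredients that have been set up in the preceding discussion. So the task reduces to verifying the two hypotheses of that lemma --- namely (i) that $\{\mathfrak{u}^{(1)\sharp}, \mathfrak{u}^{(2)\sharp}, \mathfrak{u}^{(3)\sharp}\}$ are linearly independent on a neighborhood $\Omega_0$ of $x_0$, and (ii) that $\{\beta_1, \beta_2\}$ are linearly independent on $\Omega_0$ --- and then handling the boundary-point choice in the two cases $x_0 \in \partial\Omega$ and $x_0 \in \Omega$ separately.

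First I would verify (i). For the CGO triple $u^{(0)}_\rho, u^{(1)}_\rho, u^{(2)}_\rho$ chosen above, so that $s^{(0)}(x_0)=1$, $s^{(l)}(x_0)=0$, $r^{(2)}=r^{(1)}$ and $(1,-i)\cdot r^{(1)}(x_0)=0$, the explicit leading-order expressions for $e^{-i\rho\cdot x}|\rho|^{-2}u^{(j)\sharp}_\rho$ computed just above show that $\mathfrak{u}^{(1)\sharp}_\rho, \mathfrak{u}^{(2)\sharp}_\rho, \mathfrak{u}^{(3)\sharp}_\rho$ are linearly independent at $x_0$ for $\tau$ sufficiently large, hence on a full neighborhood $U$ of $x_0$ by continuity. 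The elliptic perturbation estimate (\ref{uuu}) then gives $\|\mathfrak{u}^{(j)\sharp} - \mathfrak{u}^{(j)\sharp}_\rho\|_{C^0(\overline\Omega)} < C\varepsilon$, and since linear independence is an open condition on a compact set, the same triple built from the nearby physical solutions $u^{(j)}$ remains linearly independent on $U$ whenever $\varepsilon$ is small enough.

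Next I would verify (ii). Lemma \ref{beta11} produces a neighborhood $V$ of $x_0$ on which $\{\beta_{\rho,1}, \beta_{\rho,2}\}$ are linearly independent; the essential input there is the freedom, given by Remark \ref{rk}, to make $|\Theta^l_3(x_0)|$ as small as desired by rescaling the constant vector $g^{(0)}$, which is what allows the leading $\mu^{-1}(2,\mp 2)^T$ terms to dominate. Because $\beta_j$ and $\beta_{\rho,j}$ are built from the same algebraic combinations of the internal data, the bound (\ref{uuu}) yields $\|\beta_j - \beta_{\rho,j}\|_{C^1(\overline\Omega)}$ small, so $\{\beta_1, \beta_2\}$ remain independent on $V$ for $\varepsilon$ small and $\tau$ large. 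Set $\Omega_0 = U \cap V \cap \Omega$, shrunk if necessary to a small ball, so that the matrix $A=(a_{ij})$ expressing $\beta_l$ in the coordinate basis is uniformly invertible; this lets (\ref{muj}) be rewritten in the gradient form (\ref{V1}). Both hypotheses of Lemma \ref{prelim} now hold on $\Omega_0$, and that lemma delivers
$$
\|\mu-\tilde\mu\|_{C(\Omega_0)}\leq C\left(|\mu(x_0^+)-\tilde\mu(x_0^+)|+\|H_{x_0}-\tilde H_{x_0}\|_{C^2(\Omega_0)}\right)
$$
for any $x_0^+\in\partial\Omega_0$ that can be joined to interior points of $\Omega_0$ by an integral curve of $\beta$, which is exactly (\ref{muuu}).

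For the boundary case $x_0 \in \partial\Omega$ leading to (\ref{muu}), I would choose $\Omega_0$ to be a small half-ball (relative to $\Omega$) at $x_0$ so that $x_0 \in \partial\Omega_0$ itself, and then apply the previous estimate with $x_0^+ = x_0$. No truly new difficulty arises here; the only thing to check is that the integral curves used in the transport argument can be chosen to begin at $x_0$ and remain inside $\Omega_0$, which follows from the fact that $\beta_1, \beta_2$ form a basis on $\Omega_0$ and the ball is small. The main obstacle in the whole plan is genuinely (ii): one must ensure that the particular CGO choice forces the two transport fields $\beta_{\rho,1},\beta_{\rho,2}$ not only to be nonzero but to be linearly independent at $x_0$, and this is precisely what Remark \ref{rk} together with Lemma \ref{beta11} has already resolved; everything else is a clean application of openness of linear independence and integration along characteristics.
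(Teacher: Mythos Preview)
Your proposal is correct and follows essentially the same approach as the paper: the paper's ``proof'' of Theorem~\ref{mu} is the single line ``Applying Lemma~\ref{prelim}, we have,'' relying on the preceding CGO construction (linear independence of $\{\mathfrak{u}^{(j)\sharp}\}$ on $U$, Lemma~\ref{beta11} and Remark~\ref{rk} for the independence of $\{\beta_{\rho,1},\beta_{\rho,2}\}$ on $V$, and the perturbation bound~(\ref{uuu})) to verify the hypotheses of Lemma~\ref{prelim} on $\Omega_0=U\cap V\cap\Omega$. Your write-up simply makes these verifications explicit and separates the boundary case $x_0\in\partial\Omega$ from the interior case, which the paper leaves implicit in the two displayed estimates.
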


%\begin{proof}
 %    Since $\mu$ and $\tilde\mu$ are solutions of (\ref{V1}) with coefficients $(\Gamma, \Phi)$ and $(\tilde\Gamma,\tilde \Phi)$, respectively, we have
 %    $$
 %        \nabla(\mu-\tilde\mu)+\Gamma(x)(\mu-\tilde\mu)=\tilde\mu(\tilde\Gamma(x)-\Gamma(x))+(\Phi(x)-\tilde\Phi(x)).
 %    $$
 %    Given any point $x_0\in \partial\Omega$, by the proof of Theorem \ref{mulocal}, we have the expressions of $\mu-\tilde \mu$
%\begin{align}\label{mu33}
%    (\mu-\tilde\mu)(\psi(t))= &(\mu-\tilde\mu)(\psi(0))e^{-\int_0^t\Gamma(\psi(s))\cdot\psi'(s)ds}   \notag\\
 %   &+\int^t_0 \tilde\mu(\Gamma(\psi(s))-\tilde\Gamma(\psi(s)))+(\Phi-\tilde\Phi)(\psi(s))\cdot \psi'(s)ds,\ \ t\in [0,1].
%\end{align}
%Since $\Gamma$ and $\Phi$ only depend on $\rho, \Omega$ and $ u^{(j)}, j=0,1,2,$ and $u^{(j)}$ are in the class of $C^2(\overline\Omega)$, the value of $|\Gamma-\tilde\Gamma|$ and $|\Phi-\tilde\Phi|$ are bounded by the sum of $|\partial^\alpha u^{(j)}-\partial^\alpha\tilde u^{(j)}|$ for $|\alpha|\leq 2$. By (\ref{mu33}), there is a constant $C$ such that
%\begin{align*}
 %   |(\mu-\tilde\mu)(\psi(t))|
 %   \leq  C |(\mu-\tilde\mu)(\psi(0))|+C\|H_{x_0}-\tilde H_{x_0}\|_{C^{2}(\Omega_0)},\ \ t\in [0,1]
%\end{align*}
%for $\mu,\ \tilde\mu\in \mathcal{P}$,
%Thus, for any $x\in \Omega_0$, the value $ |(\mu-\tilde\mu)(x)|$ is controlled by the internal data and $\mu$ at the boundary point $x_0\in\partial\Omega$.

%On the other hand, if $x_0\in \Omega$,  following the similar argument as above we have the estimate (\ref{muuu}). This completes the proof.

%\end{proof}

The global uniqueness and stability result are stated as follows.

\begin{theorem}(Global reconstruction of $\mu$)\label{mu2}
Let $\Omega$ be an open bounded domain of $\mathbb{R}^2$ with smooth boundary. Suppose that the Lam\'{e} parameters $(\lambda, \mu)$ and $(\tilde\lambda,\tilde\mu)\in \mathcal{P}$ and $\mu|_{\partial\Omega}=\tilde\mu|_{\partial\Omega}$.
Let $u^{(j)}$ and $\tilde u^{(j)}$ be the solutions of the elasticity system with boundary data $g^{(j)}$ for parameters $(\lambda, \mu)$ and $(\tilde\lambda,\tilde\mu)$, respectively.
Let $H=(u^{(j)})_{1\leq j\leq J}$ and $\tilde H=(\tilde u^{(j)})_{1\leq j\leq J}$ be the corresponding internal data for $(\lambda, \mu)$ and $(\tilde\lambda,\tilde\mu)$, respectively for some integer $J\geq 3$ .

Then there is an open set of the boundary data $(g^{(j)})_{1\leq j\leq J}$ such that
if $H=\tilde H $
implies $\mu=\tilde \mu$ in $\Omega$.\\
Moreover, we have the stability estimate
$$
     \|\mu-\tilde\mu\|_{C(\Omega)}\leq C\|H-\tilde H \|_{C^2(\Omega)}.
$$
\end{theorem}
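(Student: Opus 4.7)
The plan is to globalize the local reconstruction result of Theorem \ref{mu} by a compactness argument combined with propagation of uniqueness and stability from $\partial\Omega$ into the interior along the finite cover.

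First I would build a finite cover. For every point $x_0 \in \overline\Omega$, Theorem \ref{mu} supplies a neighborhood $\Omega_0(x_0)\subset\Omega$ together with three CGO-based boundary data $(g_\rho^{(j,x_0)})_{j=0,1,2}$ so that on $\Omega_0(x_0)$ the parameter $\mu$ satisfies a first-order transport equation of the form (\ref{V1}). Compactness of $\overline\Omega$ furnishes a finite subcover $\{\Omega_0(x_k)\}_{k=1}^{N}$, and correspondingly a finite collection of CGO boundary data indexed by $(k,j)$.

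Next I would order the patches and propagate. Arrange the indices so that $x_1,\dots,x_{N_0}\in\partial\Omega$ and, for $k>N_0$, each interior $\Omega_0(x_k)$ meets $\bigcup_{j<k}\Omega_0(x_j)$ in a nonempty open set; this can be arranged by refining the cover if necessary, using connectedness of $\overline\Omega$. For a boundary patch the assumption $\mu|_{\partial\Omega}=\tilde\mu|_{\partial\Omega}$ together with (\ref{muu}) gives
\begin{align*}
\|\mu-\tilde\mu\|_{C(\Omega_0(x_k))}\leq C\,\|H-\tilde H\|_{C^2(\Omega_0(x_k))},\qquad 1\leq k\leq N_0.
\end{align*}
For an interior patch $k>N_0$, select $x_k^+\in\partial\Omega_0(x_k)\cap\Omega_0(x_j)$ for some $j<k$; inductively applying (\ref{muuu}) yields
\begin{align*}
\|\mu-\tilde\mu\|_{C(\Omega_0(x_k))}\leq C\bigl(|\mu(x_k^+)-\tilde\mu(x_k^+)|+\|H-\tilde H\|_{C^2(\Omega_0(x_k))}\bigr)\leq C\,\|H-\tilde H\|_{C^2(\Omega)},
\end{align*}
where the last step uses the already-established bound on the patch containing $x_k^+$. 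Taking the maximum over the finite cover delivers the desired global stability estimate and, when $H=\tilde H$, the identity $\mu=\tilde\mu$ throughout $\Omega$.

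Finally I would identify the open set of boundary data. Collecting the boundary traces of the CGO solutions used above produces a distinguished $J$-tuple of boundary data with $J$ at least $3N_0$ (supplemented by arbitrary additional data if $J>3N$). By the continuity estimate (\ref{uuu}) applied patch by patch, a sufficiently small $C^{1,\alpha}(\partial\Omega)$ perturbation of this $J$-tuple preserves simultaneously on every $\Omega_0(x_k)$ the two genericity conditions required by Theorem \ref{mu}, namely linear independence of $\{\mathfrak{u}^{(1)\sharp},\mathfrak{u}^{(2)\sharp},\mathfrak{u}^{(3)\sharp}\}$ and of $\{\beta_1,\beta_2\}$. Thus an entire $C^{1,\alpha}$-open neighborhood of boundary $J$-tuples meets the hypotheses of the local theorem on every patch simultaneously. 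The main obstacle, as I see it, is precisely the bookkeeping in the second step: one must ensure that the finitely many propagation steps do not accumulate uncontrolled constants, which is handled by fixing the finite cover once and for all and then using the uniform constants from Theorem \ref{mu} on each of the finitely many patches.
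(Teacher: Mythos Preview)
Your proposal is correct and follows essentially the same route as the paper: a finite cover of $\overline\Omega$ extracted by compactness, the local estimates (\ref{muu})--(\ref{muuu}) from Theorem \ref{mu} on each patch, and propagation of the stability bound from boundary patches inward through overlapping neighborhoods. The paper argues this propagation by starting from an arbitrary interior point and chaining back to the boundary in at most $N-1$ steps, whereas you order the patches in advance so that each interior one meets a predecessor; the two formulations are equivalent. One small slip: the required number of boundary data is (at most) $3N$, not $3N_0$, since each of the $N$ patches---boundary or interior---needs its own triple of CGO-based solutions.
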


  Note that for the uniqueness of $\mu$, we suppose that the two set of internal data are the same, that is, $H=\tilde H$. Since $\mu$ is uniquely reconstructed near a fixed point $x_0\in \partial\Omega$ under the condition $\mu|_{\partial\Omega}=\tilde\mu|_{\partial\Omega}$, from the stability of $\mu$ in $\Omega$, we can obtain that $\mu=\tilde\mu$ in $\Omega$.
\begin{proof}
      In section \ref{CGO}, we constructed CGO solutions in a ball $B_R$ which contains $\overline\Omega$. First, we consider any point $x$ in $\partial\Omega$. Then we can find an open neighborhood $B_x\subset B_R$ of $x$. By Theorem \ref{mu}, we have the estimate
   \begin{align}\label{a1}
     \|\mu-\tilde \mu\|_{C(B_x\cap \Omega)}\leq C \|H_x-\tilde H_x\|_{C^{2}(\Omega)}
   \end{align}
   since $\mu|_{\partial\Omega}=\tilde\mu|_{\partial\Omega}$.

   Second, for any point $y\in \Omega$, by Theorem \ref{mu},
   there exists an open neighborhood $B_y\subset\Omega$ of $y$ with $\overline B_y\cap \partial\Omega=\emptyset$ such that
   \begin{align}\label{a2}
     \|\mu-\tilde \mu\|_{C(B_y)}\leq C \left(|\mu(y^+)-\tilde \mu(y^+)|+\|H_y-\tilde H_y\|_{C^{2}(\Omega)}\right)\ \ \hbox{for some}\ y^+\in \partial B_y.
   \end{align}
   Therefore, the compact set $\overline\Omega$ is covered by $\bigcup_{x\in\overline\Omega} B_x$. Then there exists finitely many $B_x$, say, $B_{x_1},\ldots, B_{x_N}$, such that $\overline\Omega\subset \bigcup_{l=1}^N B_{x_l}$.

   Now for arbitrary point $z\in \Omega$, there is $B_{x_j}$ such that $z\in B_{x_j}$. Suppose that $\overline B_{x_j}\cap\partial\Omega\neq \emptyset$, this means that $x_j\in \partial\Omega$. Then, by (\ref{a1}), we have
    \begin{align}\label{b1}
     |\mu(z)-\tilde \mu(z)|   \leq C \|H_{x_j}-\tilde H_{x_j}\|_{C^{2}(\Omega)}.
   \end{align}

    Otherwise, if $\overline{B}_{x_j}\cap \partial\Omega$ is empty, then, by (\ref{a2}), we get that
   \begin{align}\label{b2}
     |\mu(z)-\tilde \mu(z)|  \leq C \left(|\mu(x_j^+)-\tilde \mu(x_j^+)|+\|H_{x_j}-\tilde H_{x_j}\|_{C^{2}(\Omega)}\right) \ \ \hbox{for $x_j^+\in \partial B_{x_j}$}.
   \end{align}
   For the point $x^+_j$, since $\overline \Omega$ is covered by finitely many subdomain $B_{x_l}$, after at most $N-1$ steps, we have
   \begin{align}\label{b3}
     |\mu(x_j^+)-\tilde \mu(x_j^+)|   \leq C   \sum_{l\neq j, l=1}^{N}  \|H_{x_l}-\tilde H_{x_l}\|_{C^{2}(\Omega)}.
   \end{align}
   Combining (\ref{b2}) and (\ref{b3}). Then we have
   \begin{align}\label{29}
     |\mu(z)-\tilde \mu(z)|   \leq C   \sum_{l=1}^N  \|H_{x_l}-\tilde H_{x_l}\|_{C^{2}(\Omega)}.
   \end{align}
   With (\ref{b1}) and (\ref{29}), we have the global stability
   \begin{align}
     \|\mu-\tilde \mu\|_{C(\Omega)}   \leq C   \sum_{l=1}^N  \|H_{x_l}-\tilde H_{x_l}\|_{C^{2}(\Omega)}.
   \end{align}

\end{proof}

\subsection{Reconstruction of $\lambda$ in 2D}\label{reconslambda}
The elasticity system can also be written in this form
\begin{align}\label{equ}
    u^\sharp\cdot F+u^\flat\cdot G
     =-k^2u^*,
\end{align}
where
 $$
     u^\sharp=\left(
       \begin{array}{c}
         \nabla\cdot u \\
         \nabla\cdot u \\
         a+b \\
         a-b \\
       \end{array}
     \right),\ \
     F= \left(
       \begin{array}{c}
         \partial_1 (\lambda+\mu) \\
         \partial_2 (\lambda+\mu) \\
         \partial_1\mu \\
         \partial_2\mu \\
       \end{array}
     \right),\ \
     u^\flat=\left(
       \begin{array}{c}
         \partial_1(\nabla\cdot u) \\
         \partial_2(\nabla\cdot u) \\
         \partial_1(a+b) \\
         \partial_2(a-b) \\
       \end{array}
     \right)
,\ \
       G=\left(
       \begin{array}{c}
         \lambda+\mu \\
         \lambda+\mu \\
         \mu \\
         \mu \\
       \end{array}
     \right).
$$
As in the reconstruction of $\mu$, we will construct there linear independent vectors such that the first term of the equation (\ref{equ}) can be eliminated.

Suppose that $u^{(j)}$, for $j=0,1,2,3$, are three different
solutions of (\ref{equ11}) which satisfy
\begin{align}
       u^{(j)\sharp}\cdot F+ u^{(j)\flat}\cdot G=-k^2u^{(j)*}.
\end{align}
Now for $j=1,2,3$ and $\natural=\sharp,\flat, *$, we let
$$
\mathfrak{u}^{(0)\natural}=\Re \chi(x)u^{(0)\natural},%\ \ \mathfrak{u}_2^{(0)\natural}=\Im \chi(x)u^{(0)\natural}
$$
and
$$
   \mathfrak{u}^{(1)\natural}=\Re\chi_3(x)u^{(1)\natural},\ \ \mathfrak{u}^{(2)\natural}=\Im\chi_3(x)u^{(1)\natural},\ \
   \mathfrak{u}^{(3)\natural}=\Re\left(\chi_1(x)u^{(2)\natural}+\chi_2(x)u^{(3)\natural}\right),\ \
$$
where $\chi_j(x)$ is a nonzero function. Then we get
\begin{align}\label{equ12}
       \mathfrak{u}^{(0)\sharp}\cdot F+ \mathfrak{u}^{(0)\flat}\cdot G=-k^2\mathfrak{u}^{(0)*}
\end{align}
and
\begin{align}\label{equ1}
       \mathfrak{u}^{(j)\sharp}\cdot F+ \mathfrak{u}^{(j)\flat}\cdot G=-k^2\mathfrak{u}^{(j)*} \ \ \hbox{for $j=1,2,3$.}
\end{align}
Assume that $\left\{\mathfrak{u}^{(1)\sharp},
\mathfrak{u}^{(2)\sharp}, \mathfrak{u}^{(3)\sharp}\right\}$ are three
linearly independent vectors in some subdomain of $\Omega$, say
$\Omega_0$.
Then there exist three functions
$\Theta_1, \Theta_2$, and $\Theta_3$ such that $
\mathfrak{u}^{(0)\sharp}+\sum^3_{j=1}\Theta_j
\mathfrak{u}^{(j)\sharp}=0$. Multiplying (\ref{equ1})
by $\Theta_j$ and summing over $j$ with equation (\ref{equ12}), we
have
\begin{align*}
       \mathrm{v}\cdot G=-k^2\left( \mathfrak{u}^{(0)*}+\sum^3_{j=1}\Theta_j  \mathfrak{u}^{(j)*}\right),
\end{align*}
where $\mathrm{v}=
\mathfrak{u}^{(0)\flat}+\sum^3_{j=1}\Theta_j
\mathfrak{u}^{(j)\flat}$. Let
$$
    \kappa=(1,1,0,0)^T\cdot \mathrm{v},\ \ \sigma=-(1,1,1,1)^T\cdot \mathrm{v}.
$$
Then the above equation can be rewritten as
$$
    \kappa\lambda=\sigma\mu-k^2\left( \mathfrak{u}^{(0)*}+\sum^3_{j=1}\Theta_j  \mathfrak{u}^{(j)*}\right).
$$
Suppose that $\kappa(x)$ does not vanish in $\Omega_0$ . Then we can deduce the following lemma.

\begin{lemma}\label{prelim2}
  Let $u^{(j)}$ for $0\leq j\leq 3$ be $C^2$ solutions of the elasticity system with boundary conditions $u^{(j)}=g^{(j)}$ on $\partial\Omega$. Let $u=(u_1,u_2)$, $a=\partial_2 u_1+\partial_1 u_2$ and  $b=\partial_1 u_1-\partial_2 u_2$.
  We define
  $
      u^\sharp=\left(
         \nabla\cdot u,\
         \nabla\cdot u,\
         a+b,\
         a-b
     \right)
  $
  and assume that
  \begin{enumerate}
   \item $\left\{ \mathfrak{u}^{(1)\sharp},  \mathfrak{u}^{(2)\sharp},  \mathfrak{u}^{(3)\sharp}\right\}$ are three linearly
independent vectors in $\Omega_0$, the neighborhood of $x_0$ in
$\Omega$.
   \item  $\kappa(x)$ does not vanish in $\Omega_0$.
  \end{enumerate}
  Then the reconstruction is stable in the sense that
\begin{align}
    \|\lambda-\tilde\lambda\|_{C(\Omega_0)}\leq C \left(\|\mu-\tilde\mu\|_{C(\Omega)}
    +\|H_{x_0}-\tilde H_{x_0}\|_{C^2(\Omega_0)}\right)
\end{align}
where $H_{x_0}=(u^{(j)})_{0\leq j
\leq 3}$.
\end{lemma}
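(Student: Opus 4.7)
The plan is to follow the template of Lemma \ref{prelim}, but to exploit the fact that, unlike $\mu$, the parameter $\lambda$ now satisfies a purely \emph{algebraic} identity
$$
    \kappa\,\lambda \;=\; \sigma\mu - k^2\Bigl(\mathfrak{u}^{(0)*}+\sum_{j=1}^{3}\Theta_j\,\mathfrak{u}^{(j)*}\Bigr),
$$
so no transport equation needs to be integrated. First I would introduce tilded versions $\tilde\Theta_j$, $\tilde\kappa$, $\tilde\sigma$, $\tilde{\mathrm{v}}$ constructed from $\tilde u^{(j)}$ in exactly the same manner as $\Theta_j$, $\kappa$, $\sigma$, $\mathrm{v}$ are constructed from $u^{(j)}$.

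Next I would verify that the two structural hypotheses transfer to the tilded system when $\|H_{x_0}-\tilde H_{x_0}\|_{C^2(\Omega_0)}$ is small. Since the entries of $\mathfrak{u}^{(j)\sharp}$ depend only on the first derivatives of $u^{(j)}$ (via $\nabla\cdot u$, $a$, $b$), the $4\times 3$ matrix formed by the $\mathfrak{u}^{(j)\sharp}$ depends continuously on $u^{(j)}$ in $C^1$; by hypothesis (1) its $3\times 3$ minors stay uniformly bounded away from zero, hence so do their tilded counterparts. Likewise $\mathfrak{u}^{(j)\flat}$ depends continuously on $u^{(j)}$ in $C^2$, so $\mathrm{v}$ and $\kappa$ depend continuously in $C^2$, and hypothesis (2) gives a uniform lower bound on $|\tilde\kappa|$ as well. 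This yields the coefficient estimate
\begin{equation*}
    |\Theta_j-\tilde\Theta_j|+|\kappa-\tilde\kappa|+|\sigma-\tilde\sigma|+|\kappa^{-1}-\tilde\kappa^{-1}|
    \;\leq\; C\,\|H_{x_0}-\tilde H_{x_0}\|_{C^2(\Omega_0)}
\end{equation*}
with $C$ depending only on the lower bound of $|\kappa|$ and on $\mathcal{P}$.

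I would then solve algebraically:
\begin{equation*}
    \lambda = \kappa^{-1}\Bigl[\sigma\mu - k^2\Bigl(\mathfrak{u}^{(0)*}+\sum_{j=1}^{3}\Theta_j\mathfrak{u}^{(j)*}\Bigr)\Bigr],
    \qquad
    \tilde\lambda = \tilde\kappa^{-1}\Bigl[\tilde\sigma\tilde\mu - k^2\Bigl(\tilde{\mathfrak{u}}^{(0)*}+\sum_{j=1}^{3}\tilde\Theta_j\tilde{\mathfrak{u}}^{(j)*}\Bigr)\Bigr],
\end{equation*}
subtract, and apply the triangle inequality. The term $\tilde\kappa^{-1}\tilde\sigma(\mu-\tilde\mu)$ is controlled by $\|\mu-\tilde\mu\|_{C(\Omega)}$ using that $\mu\in\mathcal P$ and $|\tilde\kappa|,|\tilde\sigma|$ are bounded. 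Every remaining term is a product in which one factor is a coefficient difference already estimated above and the other factor is a $C^2$-bounded quantity built from $\tilde u^{(j)}$, $\mu$, or $\tilde\mu$; all such terms are therefore bounded by $C\|H_{x_0}-\tilde H_{x_0}\|_{C^2(\Omega_0)}$.

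The main obstacle is the bookkeeping of the coefficient-difference estimate: one must check that every quantity appearing in $\kappa^{-1}$, $\sigma$, $\Theta_j$, and $\mathfrak{u}^{(j)\natural}$ depends on $u^{(j)}$ through at most two derivatives, and that the linear system determining the $\Theta_j$ inherits an invertibility estimate from hypothesis (1) that is stable under $C^2$ perturbation. Once that is in place, the conclusion follows immediately from the algebraic identity, without any need for a Gr\"onwall-type argument.
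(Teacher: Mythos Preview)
Your proposal is correct and matches the paper's intended argument: the paper derives the algebraic identity $\kappa\lambda=\sigma\mu-k^2\bigl(\mathfrak{u}^{(0)*}+\sum_{j=1}^3\Theta_j\mathfrak{u}^{(j)*}\bigr)$ immediately before stating the lemma and then omits the proof, so the only thing left is precisely the subtraction-and-triangle-inequality step you describe, together with the observation that all coefficients depend continuously on the data through at most two derivatives. Your remark that no Gr\"onwall/integration step is needed here (in contrast with Lemma~\ref{prelim}) is exactly the point.
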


\subsubsection{Global reconstruction of $\lambda$}
We will find three linearly
independent vectors $\left\{ \mathfrak{u}^{(1)\sharp},  \mathfrak{u}^{(2)\sharp},  \mathfrak{u}^{(3)\sharp}\right\}$ first. Then we can deduce stability of $\lambda$ by using Lemma \ref{prelim2} and Theorem \ref{mu2}.

Plugging the CGO solution $ u_\rho=\mu^{-1/2}w_\rho+\mu^{-1}\nabla f_\rho-f_\rho\nabla\mu^{-1}$
into $u^\sharp$,
we get
\begin{align*}
     u^\sharp_\rho=ie^{i\rho\cdot x}\left(\left(
       \begin{array}{c}
         \frac{\sqrt{\mu}}{\lambda+2\mu}\rho\cdot r\\
         \frac{\sqrt{\mu}}{\lambda+2\mu}\rho\cdot r \\
         2\mu^{-1}(is(\rho_1^2+\rho_1\rho_2)+\omega_1\cdot \nabla s)+\mu^{-1/2}\omega_1\cdot r \\
         2\mu^{-1}(is(\rho_1\rho_2-\rho_1^2)+\omega_2\cdot \nabla s)+\mu^{-1/2}\omega_2\cdot r  \\
       \end{array}
     \right)
     +O(1)\right),
\end{align*}
where $\omega_1=(\rho_1+\rho_2, \rho_1-\rho_2)$ and $\omega_2=(\rho_2-\rho_1,\rho_1+\rho_2)$.

For the same fixed point $x_0\in\overline\Omega$. Denote that $\rho=\tau(e_1+ie_2)$ and $\tilde\rho=i\rho$.
We choose a constant vector $g^{(0)}$ such that $C_0(x,\theta)g^{(0)}=(r^{(0)}, s^{(0)})$ with
$$
     s^{(0)}\neq 0,\ \ \rho\cdot r^{(0)}\neq 0,\ \ \rho\cdot r^{(0)}(x_0)=1
$$
in a neighborhood of $x_0$, $U$. Then we get the CGO solution of the elasticity system
$$
  u_\rho^{(0)}=\mu^{-1/2}w_\rho^{(0)}+\mu^{-1}\nabla f_\rho^{(0)}-f_\rho^{(0)}\nabla \mu^{-1}
$$
with
\begin{align*}
\left(\begin{array}{c}
   w_\rho^{(0)} \\
   f_\rho^{(0)}
\end{array}\right)
=e^{i\rho\cdot x}\left(\left(
                         \begin{array}{c}
                           r^{(0)} \\
                           s^{(0)}\\
                         \end{array}
                       \right)
+O(\tau^{-1})\right).
\end{align*}

We choose another constant vector $g^{(1)}$ such that $C_1(x,\theta)g^{(1)}=(r^{(1)}, s^{(1)})$ with
\begin{align*}
s^{(1)}\neq 0  \ \hbox{in $U$ and $\rho\cdot r^{(1)}(x_0)=0$}.
\end{align*}
Then we get the CGO solution of the elasticity system
$$
  u_\rho^{(1)}=\mu^{-1/2}w_\rho^{(1)}+\mu^{-1}\nabla f_\rho^{(1)}-f_\rho^{(1)}\nabla \mu^{-1}
$$
with
\begin{align*}
\left(\begin{array}{c}
   w_\rho^{(1)} \\
   f_\rho^{(1)}
\end{array}\right)
=e^{i\rho\cdot x}\left(\left(
                         \begin{array}{c}
                           r^{(1)} \\
                           s^{(1)}\\
                         \end{array}
                       \right)
+O(\tau^{-1})\right).
\end{align*}

For $l=2,3$, we choose a constant vector $g^{(l)}$ such that $C_l(x,\theta)g^{(l)}=(r^{(l)}, s^{(l)})$ with
$$
\rho\cdot r^{(l)}\neq 0
$$
in $U$. Here we can choose
$$
r^{(2)}=r^{(3)},\ s^{(2)}=s^{(3)}
$$
by taking $g^{(2)}=g^{(3)}$ and $C_2(x,\theta)=C_3(x,\tilde\theta)$. Then we get the CGO solution of the elasticity system
$$
  u_\rho^{(l)}=\mu^{-1/2}w_\rho^{(l)}+\mu^{-1}\nabla f_\rho^{(l)}-f_\rho^{(l)}\nabla \mu^{-1}
$$
with
\begin{align*}
\left(\begin{array}{c}
   w_\rho^{(2)} \\
   f_\rho^{(2)}
\end{array}\right)
=e^{i\rho\cdot x}\left(\left(
                         \begin{array}{c}
                           r^{(2)} \\
                           s^{(2)}\\
                         \end{array}
                       \right)
+O(\tau^{-1})\right),\ \left(\begin{array}{c}
   w_\rho^{(3)} \\
   f_\rho^{(3)}
\end{array}\right)
=e^{i\tilde\rho\cdot x}\left(\left(
                         \begin{array}{c}
                           r^{(3)} \\
                           s^{(3)}\\
                         \end{array}
                       \right)
+O(\tau^{-1})\right).
\end{align*}
We let
$$
   e^{-i\rho\cdot x}i^{-1}|\rho|^{-2}u^{(1)\sharp}_\rho=2i\mu^{-1}s
     \left(
       \begin{array}{c}
         0 \\
         0 \\
         1+i \\
         i-1 \\
       \end{array}
     \right)
     +O(\tau^{-1}),
$$
\begin{align*}
      \mathfrak{u}_\rho^{2,1}:=e^{-i\rho\cdot x}i^{-1}|\rho|^{-1}u^{(2)\sharp}_\rho     =\frac{1}{\sqrt{2}}\left(
       \begin{array}{c}
         \frac{\sqrt{\mu}}{\lambda+2\mu}(1,i)\cdot r^{(2)} \\
         \frac{\sqrt{\mu}}{\lambda+2\mu}(1,i)\cdot r^{(2)}\\
         2\mu^{-1}\tau(1+i)is^{(2)}+\nu^{(2)} \\
         2\mu^{-1}\tau(i-1)is^{(2)}+i\nu^{(2)} \\
       \end{array}
     \right)
     +O(\tau^{-1})
\end{align*}
and
\begin{align*}
      \mathfrak{u}_\rho^{2,2}:=e^{-i\tilde\rho\cdot x}i^{-1}|\tilde\rho|^{-1}u^{(3)\sharp}_\rho
      =\frac{1}{\sqrt{2}}\left(
       \begin{array}{c}
         \frac{\sqrt{\mu}}{\lambda+2\mu}(i,-1)\cdot r^{(3)} \\
         \frac{\sqrt{\mu}}{\lambda+2\mu}(i,-1)\cdot r^{(3)}\\
         2\mu^{-1}\tau(-1-i)is^{(3)}+\nu^{(3)} \\
         2\mu^{-1}\tau(1-i)is^{(3)}+i\nu^{(3)}  \\
       \end{array}
     \right)
     +O(\tau^{-1}),
\end{align*}
where $\nu^{(2)}=2\mu^{-1}(1+i,1-i)\cdot \nabla s^{(2)}+\mu^{-1/2}(1+i,1-i)\cdot r^{(2)}$ and $\nu^{(3)}=2\mu^{-1}(-1+i,i+1)\cdot \nabla s^{(3)}+\mu^{-1/2}(-1+i,i+1)\cdot r^{(3)}$.
To eliminate the higher order term of $\mathfrak{u}_\rho^{2,j}$, we consider the summation of two vectors :
%we define the vector $\mathfrak{u}_\rho^2=\mathfrak{u}_\rho^{2,1}+\mathfrak{u}_\rho^{2,2}$ and it can be expressed as
\begin{align*}
    &e^{-i\rho\cdot x}i^{-1}|\rho|^{-1}u^{(2)\sharp}_\rho  +e^{-i\tilde\rho\cdot x}i^{-1}|\tilde\rho|^{-1}u^{(3)\sharp}_\rho\\
    &=\frac{1}{\sqrt{2}}\left(
       \begin{array}{c}
         \frac{\sqrt{\mu}}{\lambda+2\mu}(1+i, i-1)\cdot r^{(3)} \\
         \frac{\sqrt{\mu}}{\lambda+2\mu}(1+i, i-1)\cdot r^{(3)}\\
         2\mu^{-1}(2i, 2)\cdot \nabla s^{(2)}+\mu^{-1/2}(2i,2)\cdot r^{(3)} \\
         2\mu^{-1}(-2,2i)\cdot \nabla s^{(3)}+\mu^{-1/2}(-2,2i)\cdot r^{(3)} \\
       \end{array}
     \right)
     +O(\tau^{-1}).
\end{align*}

We define
$$
     \mathfrak{u}^{(0)\natural}_\rho=\Re e^{-i\rho\cdot x}i^{-1}|\rho|^{-2}u^{(0)\natural}_\rho;\ \natural=\sharp,\flat,*
$$
$$
    \mathfrak{u}^{(1)\natural}_\rho=\Re e^{-i\rho\cdot x}i^{-1}|\rho|^{-2}u^{(1)\natural}_\rho;\ \
    \mathfrak{u}^{(2)\natural}_\rho=\Im e^{-i\rho\cdot x}i^{-1}|\rho|^{-2}u^{(1)\natural}_\rho;
$$
and
$$
     \mathfrak{u}^{(3)\natural}_\rho=\Re\left(e^{-i\rho\cdot x}i^{-1}|\rho|^{-1}u^{(2)\natural}_\rho  +e^{-i\tilde\rho\cdot x}i^{-1}|\tilde\rho|^{-1}u^{(3)\natural}_\rho\right).
$$
Thus we have constructed three linear independent vectors $\left\{\mathfrak{u}^{(1)\sharp}_\rho,\mathfrak{u}^{(2)\sharp}_\rho,\mathfrak{u}^{(3)\sharp}_\rho\right\}$ in $U$ as $\tau$ is sufficiently large. Therefore, there are three functions $\Theta_j,\ j=1,2,3$, such that
\begin{align}\label{sum3.2}
\mathfrak{u}_\rho^{(0)\sharp}+\sum^3_{j=1}\Theta_j\mathfrak{u}_\rho^{(j)\sharp}=0.
\end{align}
They also satisfy the following equations:
\begin{align*}
   \mathfrak{u}_\rho^{(j)\sharp} \cdot F+ \mathfrak{u}_\rho^{(j)\flat}\cdot G= -k^2 \mathfrak{u}_\rho^{(j)*},\ \ 0\leq j\leq 3.
\end{align*}
Summing over $j$ and using (\ref{sum3.2}), we get the following equation
\begin{align}\label{frac}
    \mathrm{v}\cdot G
     =-k^2\left(\mathfrak{u}_\rho^{(0)*}+\sum^3_{j=1}\Theta_j\mathfrak{u}_\rho^{(j)*}\right),
\end{align}
where
\begin{align*}
    \mathrm{v}=\mathfrak{u}_\rho^{(0)\flat}+\sum^3_{j=1}\Theta_j\mathfrak{u}_\rho^{(j)\flat}.
\end{align*}

Then we obtain that
\begin{align}\label{lambdamu}
\kappa\lambda=\sigma\mu-k^2\left(\mathfrak{u}_\rho^{(0)*}+\sum^3_{j=1}\Theta_j\mathfrak{u}_\rho^{(j)*}\right),
\end{align}
where
$$
     \kappa= (1,1,0,0)^T\cdot \mathrm{v},
$$
$$
     \sigma=-(1,1,1,1)^T\cdot \mathrm{v}.
$$
\begin{lemma}\label{kappa}
     $\kappa$ does not vanish in some neighborhood of $x_0$.
\end{lemma}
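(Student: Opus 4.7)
The plan is to mimic the proof of Lemma \ref{beta11}, working at a fixed $x_0$ and studying the asymptotic behavior of $\kappa(x_0)$ as $\tau\to\infty$. The basic ingredient is that by (\ref{wfa}) together with (\ref{s}) one has the leading-order identity
\[
\nabla\cdot u_\rho \;\sim\; i\,\frac{\sqrt{\mu}}{\lambda+2\mu}\,(\rho\cdot r)\,e^{i\rho\cdot x},
\]
so that $(1,1,0,0)^T\!\cdot u^\flat_\rho = (\partial_1+\partial_2)(\nabla\cdot u_\rho) \sim -(\rho_1+\rho_2)\tfrac{\sqrt{\mu}}{\lambda+2\mu}(\rho\cdot r)e^{i\rho\cdot x}$. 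This is what I will substitute into each $(1,1,0,0)^T\!\cdot\mathfrak{u}^{(j)\flat}_\rho(x_0)$.

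The first step is to track the four contributions separately using the normalizations built into the construction of $u_\rho^{(0)},\ldots,u_\rho^{(3)}$. For $j=0$, the normalization $\rho\cdot r^{(0)}(x_0)=1$ forces $(1,1,0,0)^T\!\cdot\mathfrak{u}^{(0)\flat}_\rho(x_0)$ to be a nonzero $\tau$-power with explicit coefficient involving $\sqrt{\mu}/(\lambda+2\mu)$ at $x_0$. For $j=1,2$ the identity $\rho\cdot r^{(1)}(x_0)=0$ kills the leading $\tau$-power, so these contributions are one order smaller. For $j=3$ the choice $\tilde\rho=i\rho$, $r^{(3)}=r^{(2)}$, $s^{(3)}=s^{(2)}$ produces a cancellation of the highest $\tau$-power in $e^{-i\rho\cdot x}|\rho|^{-1}u^{(2)\flat}_\rho + e^{-i\tilde\rho\cdot x}|\tilde\rho|^{-1}u^{(3)\flat}_\rho$, leaving only an $O(1)$ remainder after taking the real part—exactly parallel to how the $\mathfrak{u}^{(3)\natural}$ combination was designed to tame higher-order terms in the $\mu$ reconstruction.

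The second step is to read off the $\Theta_j(x_0)$ from the linear system $\mathfrak{u}^{(0)\sharp}_\rho+\sum_j\Theta_j\mathfrak{u}^{(j)\sharp}_\rho=0$ at $x_0$. Using the leading expansions $e^{-i\rho\cdot x}i^{-1}|\rho|^{-2}u^{(1)\sharp}_\rho\sim 2i\mu^{-1}s^{(1)}(0,0,1+i,i-1)^T$ etc.\ that appear in the text, the third and fourth components of the system decouple and give $\Theta_1,\Theta_2$ as explicit $O(1)$ functions of $\Theta_3$, while the first component (at the next $\tau$-order) pins down $\Theta_3$. Combining with step one, $\kappa(x_0)$ takes the form
\[
\kappa(x_0) \;\sim\; \bigl(\text{nonzero from } \mathfrak{u}^{(0)\flat}_\rho\bigr) \;+\; \Theta_3(x_0)\cdot\bigl(O(1)\text{ from }\mathfrak{u}^{(3)\flat}_\rho\bigr) \;+\; \text{lower-order terms}.
\]
At this point I would invoke the same rescaling device as in Remark \ref{rk}: replacing $g^{(0)}$ by $g^{(0)}/M$ for sufficiently large $M$ makes $|\Theta_3(x_0)|$ arbitrarily small without affecting the leading contribution from the $j=0$ piece, which guarantees $\kappa(x_0)\neq 0$. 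Continuity of $\kappa$ in $x$ then yields a neighborhood on which $\kappa$ does not vanish.

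The main obstacle will be the bookkeeping: several of the surviving contributions live at the same order in $\tau$ after the various $|\rho|^{-2}$ and $|\rho|^{-1}$ scalings, and ensuring that the $\Theta_3\cdot\mathfrak{u}^{(3)\flat}_\rho$ term cannot conspire to cancel the main term from $\mathfrak{u}^{(0)\flat}_\rho$ requires the Remark \ref{rk}-type rescaling argument to be carried out carefully in this new setting. Modulo this, the argument is a direct analogue of the proof of Lemma \ref{beta11}.
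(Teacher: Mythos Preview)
Your overall strategy is exactly the paper's: compute $(1,1,0,0)^T\!\cdot\mathfrak{u}^{(j)\flat}_\rho(x_0)$ for each $j$, observe that the $j=1,2$ pieces drop an order because $\rho\cdot r^{(1)}(x_0)=0$, that the $j=3$ piece has its $O(\tau)$ part cancelled by the $r^{(2)}=r^{(3)}$ mechanism, and that the $j=0$ piece contributes $-\tfrac{\sqrt{\mu}}{2(\lambda+2\mu)}$; then make $\Theta_3(x_0)$ small and conclude by continuity.

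The one slip is your description of the rescaling. Replacing $g^{(0)}$ by $g^{(0)}/M$ is a \emph{uniform} scaling of the solution $u^{(0)}_\rho$: by linearity it scales $\mathfrak{u}^{(0)\sharp}_\rho$, $\mathfrak{u}^{(0)\flat}_\rho$, all the $\Theta_j$, and hence the whole vector $\mathrm{v}$ by $1/M$, so $\kappa$ simply shrinks and the ratio between your ``nonzero'' term and the $\Theta_3$-term does not change. That is not what Remark~\ref{rk} actually does: there one chooses a new $\hat g^{(0)}$ so that $\hat r^{(0)}(x_0)=r^{(0)}(x_0)/M$ while $\hat s^{(0)}(x_0)$ is kept fixed. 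In the present $\lambda$-reconstruction the roles of $u^\sharp$ and $u^\flat$ are swapped relative to Section~\ref{localmu}, so the correct move is the dual one the paper makes: keep $\rho\cdot r^{(0)}(x_0)=1$ (this is what produces the nonzero main term $-\tfrac{\sqrt{\mu}}{2(\lambda+2\mu)}$ in $(1,1,0,0)^T\!\cdot\mathfrak{u}^{(0)\flat}_\rho$) and instead take $s^{(0)}(x_0)$ small, since the leading part of $\mathfrak{u}^{(0)\sharp}_\rho$ lives in its last two components and is proportional to $s^{(0)}$, hence controls $\Theta_3$. With that correction your argument goes through and coincides with the paper's proof.
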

\begin{proof}
    Since $\Theta_j\sim \tau^0$ and $\rho\cdot r^{(1)}(x_0)=0$, by using similar argument as in the proof of Lemma \ref{beta11}, we have that
    \begin{align}\label{alpha}
         (1,1,0,0)^T\cdot \Theta_1\mathfrak{u}_\rho^{(1)\flat}\sim \tau^{-1},\ \ \ (1,1,0,0)^T\cdot\Theta_2\mathfrak{u}_\rho^{(2)\flat}\sim \tau^{-1}.
    \end{align}
    Observe that
    \begin{align*}
         &\kappa_4:=(1,1,0,0)^T\cdot\mathfrak{u}_\rho^{(3)\flat}\\
         = &\Re \left(\left(
       \begin{array}{c}
        1 \\
        1 \\
       \end{array}
     \right)
     \cdot
    \left(\frac{e^{-i\rho\cdot x}}{i|\rho|} \left(
       \begin{array}{c}
         \partial_1(\nabla\cdot u^{(2)}) \\
         \partial_2(\nabla\cdot u^{(2)}) \\
       \end{array}
     \right)
     +\frac{e^{-i\tilde\rho\cdot x}}{i|\tilde\rho|} \left(
       \begin{array}{c}
         \partial_1(\nabla\cdot u^{(3)}) \\
         \partial_2(\nabla\cdot u^{(3)}) \\
       \end{array}
     \right)\right)\right).
   \end{align*}
   The $O(\tau)$ term of $ \kappa_4$ is
   $$
        \frac{1}{i\sqrt{2}\tau}(-\frac{\sqrt{\mu}}{\lambda+2\mu} (\rho^{(2)}_1+\rho^{(2)}_2)\rho^{(2)}\cdot r^{(2)}-\frac{\sqrt{\mu}}{\lambda+2\mu} (\rho^{(3)}_1+\rho^{(3)}_2)\rho^{(3)}\cdot r^{(3)})=0
   $$
   since $r^{(2)}=r^{(3)}$. Thus the leading order term of $ \kappa_4$ is $O(1)$, that is,

   $$
        \varphi=\varphi^{(2)}+\varphi^{(3)},
   $$
   where
   \begin{align*}
        \varphi^{(l)}=&\frac{1}{i\sqrt{2}\tau}\Big(\mu^{-1/2}(i\rho^{(l)}_2\partial_1 r^{(l)}_1+i\rho^{(l)}_1\partial_2 r^{(l)}_2+i\rho^{(l)}_1\partial_2 r^{(l)}_1+i\rho^{(l)}_2 \partial_1 r^{(l)}_2\\
        &+2i\rho\cdot \nabla r^{(l)})
        +\mu^{-1}(2i\rho\cdot (\nabla (\partial_1 s^{(l)})+\nabla(\partial_2 s^{(l)}))+i(\rho_1+\rho_2)\Delta s^{(l)})\\
        &+2i\partial_1\mu^{-1}\rho^{(l)}\cdot \nabla s^{(l)} -\partial_{11}\mu^{-1}i(\rho^{(l)}_1 +\rho^{(l)}_2) s^{(l)}\\
        &+2i\partial_2\mu^{-1}\rho^{(l)}\cdot \nabla s^{(l)}-\partial_{22}\mu^{-1}i(\rho^{(l)}_1+\rho^{(l)}_2 )s^{(l)}\\
        &+\partial_1\mu^{-1/2}(i\rho^{(l)}_2 r^{(l)}_1+i\rho^{(l)}_2 r^{(l)}_2+2i\rho^{(l)}_1r^{(l)}_1)\\
        &+\partial_2\mu^{-1/2}(i\rho^{(l)}_1r^{(l)}_1+i\rho^{(l)}_1r^{(l)}_2+2i\rho^{(l)}_2 r^{(l)}_2)
\Big).
   \end{align*}
Since
   \begin{align*}
       \kappa_1:=&(1,1,0,0)^T\cdot \mathfrak{u}_\rho^{(0)\flat}(x_0)\\
       &=\Re  \left(
       \begin{array}{c}
        1 \\
        1 \\
       \end{array}
     \right)
     \cdot
     \frac{-ie^{-i\rho\cdot x}}{|\rho|^2} \left(
       \begin{array}{c}
         \partial_1(\nabla\cdot u) \\
         \partial_2(\nabla\cdot u) \\
       \end{array}
     \right)(x_0)
     =\frac{-\sqrt{\mu}}{2(\lambda+2\mu)}+O(\tau^{-1})
   \end{align*}
   and (\ref{alpha}), we get that
   \begin{align}
         \kappa(x_0)
         =&(1,1,0,0)^T\cdot \mathrm{v}(x_0)\\
     = &\kappa_1(x_0)+\Theta_3 \kappa_4(x_0)+O(\tau^{-1})\\
     \sim &\frac{-\sqrt{\mu}}{2(\lambda+2\mu)}+\Theta_3(x_0)\varphi(x_0).
    \end{align}
Since $\mu\in \mathcal{P}$, $\varphi(x_0)$ is some fixed number. Moreover, we can take $s^{(0)}$ small such that $| \Theta_3(x_0)|$ to be sufficiently small (It can be done by following similar argument in Remark 2), then we can obtain that $\frac{-\sqrt{\mu}}{2(\lambda+2\mu)}+\Theta_3(x_0)\varphi(x_0)\neq 0.$ By continuity of $\kappa$, there exists a neighborhood $V$ such that $\kappa$ never vanishes in $V$.
\end{proof}

Let $\Omega_0=U\cap V\cap \Omega$. We have $\left\{\mathfrak{u}^{(1)\sharp}_\rho,\mathfrak{u}^{(2)\sharp}_\rho,\mathfrak{u}^{(3)\sharp}_\rho\right\}$are linearly independent in $\Omega_0$ as $\tau$ is sufficiently large and also $\kappa$ does not vanish in $\Omega_0$.
Then it follows that
\begin{align}\label{lambda123}
     \lambda= \frac{\sigma\mu}{\kappa}-\frac{k^2}{\kappa}\left(\mathfrak{u}_\rho^{(0)*}+\sum^3_{j=1}\Theta_j\mathfrak{u}_\rho^{(j)*}\right)\ \ \ \ \ \hbox{in $\Omega_0$}.
\end{align}

As in section \ref{localmu}, we also can find the solutions $u^{(j)}$ of the elasticity system such that
\begin{align}
    \|u^{(j)}-u^{(j)}_\rho\|_{C^{2}(\Omega)}
      <C \varepsilon,\ \ \ j=0,1,2,3.
\end{align}

Now we let the internal data $H_{x_0}$ contains the three solutions we constructed in Theorem \ref{mulocal} and the four solutions $u^{(j)},j=0,1,2,3$ in this section. Then we have the following result.

\begin{theorem}\label{lambdalocal}
Suppose that $(\lambda, \mu)$ and $(\tilde\lambda,\tilde\mu)\in \mathcal{P}$. For any fixed $x_0\in \partial\Omega$, let $u^{(j)}_\rho$ be the corresponding CGO solutions for $(\lambda,\mu)$ and $u^{(j)}$ constructed in sections 3.1, 3.2 with internal data $H_{x_0}$ and with $\varepsilon$ sufficiently small. Let $\tilde H_{x_0}$ be constructed similarly with the parameters $(\tilde\lambda,\tilde\mu)$. Assume that $\mu|_{\partial\Omega}=\tilde\mu|_{\partial\Omega}$.

Then $H_{x_0}=\tilde H_{x_0}$ implies that $ \lambda=\tilde\lambda$ in $\Omega_0$.
\end{theorem}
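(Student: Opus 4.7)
The plan is to reduce the uniqueness of $\lambda$ to the algebraic identity (\ref{lambda123}) combined with the already-proved uniqueness of $\mu$ from Theorem \ref{mulocal}. By construction, $H_{x_0}$ contains both the three solutions from Section 3.1 needed for $\mu$-reconstruction and the four CGO solutions $u^{(0)}_\rho,\ldots,u^{(3)}_\rho$ used in Section 3.2, so both reductions apply simultaneously on a common neighborhood $\Omega_0$.

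First, I would apply Theorem \ref{mulocal} to the three-solution subfamily of $H_{x_0}$. Together with the hypotheses $H_{x_0}=\tilde H_{x_0}$ and $\mu|_{\partial\Omega}=\tilde\mu|_{\partial\Omega}$, this yields $\mu\equiv\tilde\mu$ on $\Omega_0$. Next, I would invoke the constructions of Section 3.2: taking $\tau$ sufficiently large and $\varepsilon$ sufficiently small, the vectors $\{\mathfrak{u}^{(1)\sharp},\mathfrak{u}^{(2)\sharp},\mathfrak{u}^{(3)\sharp}\}$ are linearly independent in $\Omega_0$, and Lemma \ref{kappa} (together with the $C^2$-closeness of $u^{(j)}$ to $u^{(j)}_\rho$) ensures that $\kappa$ does not vanish in $\Omega_0$. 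Consequently, the identity (\ref{lambda123}) holds pointwise in $\Omega_0$ for $(\lambda,\mu)$, and the analogous identity holds for $(\tilde\lambda,\tilde\mu)$.

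The key observation is that every object on the right-hand side of (\ref{lambda123}) is determined by the internal data alone: the $\mathfrak{u}^{(j)\natural}$ are built from $u^{(j)}$ and its first two derivatives, the multipliers $\Theta_j$ are defined as the unique coefficients in the linear-dependence relation (\ref{sum3.2}), and hence $\kappa$ and $\sigma$ are intrinsic to $H_{x_0}$. Since $H_{x_0}=\tilde H_{x_0}$, all of these quantities agree for the two parameter sets. Subtracting the two versions of (\ref{lambda123}) therefore gives
\[
   \lambda-\tilde\lambda \,=\, \frac{\sigma}{\kappa}\,(\mu-\tilde\mu) \quad \text{in } \Omega_0,
\]
and combining with $\mu=\tilde\mu$ forces $\lambda=\tilde\lambda$ on $\Omega_0$. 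Equivalently, one may apply Lemma \ref{prelim2} with $\|H_{x_0}-\tilde H_{x_0}\|_{C^2(\Omega_0)}=0$ and the already-established $\|\mu-\tilde\mu\|_{C(\Omega)}=0$ to conclude directly.

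The main obstacle is not the final identification itself but securing that the two auxiliary ingredients of the reconstruction---linear independence of $\{\mathfrak{u}^{(j)\sharp}\}_{j=1}^{3}$ and nonvanishing of $\kappa$---hold on a common $\Omega_0$ for both parameter sets. This is handled by the preceding work: the CGO design of $g^{(0)},\ldots,g^{(3)}$ so that Lemma \ref{kappa} applies, the limit $\tau\to\infty$, and the perturbation estimate (\ref{uuu}) propagate these properties from $u^{(j)}_\rho$ to $u^{(j)}$ and, identically, to $\tilde u^{(j)}$. With those structural facts in hand, the step from $H_{x_0}=\tilde H_{x_0}$ to $\lambda=\tilde\lambda$ is purely algebraic.
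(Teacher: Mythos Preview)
Your proposal is correct and follows essentially the same approach as the paper: invoke Theorem~\ref{mulocal} to obtain $\mu=\tilde\mu$ on $\Omega_0$, and then use the algebraic identity (\ref{lambda123}), whose right-hand side depends only on the internal data and $\mu$, to conclude $\lambda=\tilde\lambda$. The paper's own proof is a one-line reference to Theorem~\ref{mulocal} and (\ref{lambda123}); you have merely spelled out the structural prerequisites (linear independence, nonvanishing $\kappa$, data-dependence of $\Theta_j,\kappa,\sigma$) that the paper established in the text preceding the theorem.
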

\begin{proof}
 Applying Theorem \ref{mulocal} and equation (\ref{lambda123}), we have the uniqueness of $\lambda$ near the point $x_0$.

\end{proof}

We deduce the following result by applying Lemma \ref{prelim2} and Theorem \ref{mu}.
\begin{theorem}\label{lambda}
Suppose that $(\lambda, \mu)$ and $(\tilde\lambda,\tilde\mu)\in \mathcal{P}$. For any fixed $x_0\in \overline\Omega$, let $u^{(j)}_\rho$ be the corresponding CGO solutions for $(\lambda,\mu)$ and $u^{(j)}$ constructed in sections 3.1, 3.2 with internal data $H_{x_0}$ and with $\varepsilon$ sufficiently small. Let $\tilde H_{x_0}$ be constructed similarly for the parameters $(\tilde\lambda,\tilde\mu)$ with $u^{(j)}|_{\partial\Omega}=\tilde u^{(j)}|_{\partial\Omega}$.
Then we have the estimates
\begin{align}\label{lambdaequation}
     \|\lambda-\tilde \lambda\|_{C(\Omega_0)}\leq C \left(|\mu(x_0)-\tilde \mu(x_0)|+\|H_\rho-\tilde H_\rho\|_{C^{2}(\Omega_0)}\right),\ \ \ x_0\in \partial\Omega.
\end{align}
and
\begin{align}\label{lambda2}
     \|\lambda-\tilde \lambda\|_{C(\Omega_0)}\leq C \left(|\mu(x^+_0)-\tilde \mu(x^+_0)|+\|H_\rho-\tilde H_\rho\|_{C^{2}(\Omega_0)}\right),\ \ \ x^+_0\in \partial\Omega_0,\ x_0\in \Omega.
\end{align}
\end{theorem}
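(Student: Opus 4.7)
The plan is to derive Theorem \ref{lambda} as a direct consequence of the local stability Lemma \ref{prelim2} combined with the local $\mu$-stability of Theorem \ref{mu}. The reconstruction formula (\ref{lambda123}) expresses $\lambda$ pointwise in terms of $\mu$, the known internal data, and the coefficients $\kappa,\sigma,\Theta_j$, all of which depend on at most two derivatives of the solutions $u^{(j)}$. So the crux is (i) checking that the hypotheses of Lemma \ref{prelim2} are inherited by the actual solutions $u^{(j)}$ from the CGO family, and (ii) propagating the stability bound from $\mu$ into the formula.

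First I would fix $x_0\in \overline\Omega$ and pick CGO solutions $u^{(j)}_\rho$, $0\le j\le 3$, as constructed in Section 3.2. By the argument just before the statement, for $\tau$ sufficiently large the three real vectors $\{\mathfrak{u}^{(1)\sharp}_\rho,\mathfrak{u}^{(2)\sharp}_\rho,\mathfrak{u}^{(3)\sharp}_\rho\}$ are linearly independent in a neighborhood $U$ of $x_0$, and Lemma \ref{kappa} guarantees that the associated $\kappa_\rho$ is nonzero in a neighborhood $V$. Choosing boundary data $g^{(j)}\in C^{1,\alpha}(\partial\Omega)$ with $\|g^{(j)}-u^{(j)}_\rho|_{\partial\Omega}\|_{C^{1,\alpha}}<\varepsilon$ and invoking elliptic regularity as in (\ref{uuu}) yields $\|u^{(j)}-u^{(j)}_\rho\|_{C^{2}(\overline\Omega)}<C\varepsilon$. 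Because $\mathfrak{u}^{(j)\sharp}$, $\mathfrak{u}^{(j)\flat}$, and the derived quantities $\Theta_j$, $\kappa$, $\sigma$ depend continuously on the $C^2$-data of $u^{(j)}$, for $\varepsilon$ small enough the corresponding $\{\mathfrak{u}^{(1)\sharp},\mathfrak{u}^{(2)\sharp},\mathfrak{u}^{(3)\sharp}\}$ remain linearly independent in $\Omega_0 := U\cap V\cap\Omega$ and $\kappa$ remains nonvanishing there. Thus both hypotheses of Lemma \ref{prelim2} hold, yielding
\[
\|\lambda-\tilde\lambda\|_{C(\Omega_0)}\le C\bigl(\|\mu-\tilde\mu\|_{C(\Omega_0)} + \|H_{x_0}-\tilde H_{x_0}\|_{C^2(\Omega_0)}\bigr).
\]

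To close the estimate, I would apply Theorem \ref{mu} on the same $\Omega_0$ (after possibly shrinking it, which is harmless since the CGO construction in Section 3.1 can be performed at the same $x_0$ and combined into one internal dataset $H_{x_0}$). When $x_0\in\partial\Omega$ this gives $\|\mu-\tilde\mu\|_{C(\Omega_0)}\le C(|\mu(x_0)-\tilde\mu(x_0)|+\|H_{x_0}-\tilde H_{x_0}\|_{C^2(\Omega_0)})$, and substituting into the bound above produces (\ref{lambdaequation}); when $x_0\in \Omega$ the analogue with $x_0^+\in\partial\Omega_0$ produces (\ref{lambda2}). The main subtlety — and the only place where care is needed — is ensuring that the enlarged internal dataset $H_{x_0}$ (containing both the three solutions used to reconstruct $\mu$ in Section 3.1 and the four solutions used to reconstruct $\lambda$ in Section 3.2) still allows a single common neighborhood $\Omega_0$ on which the linear independence, the non-vanishing of $\beta_1,\beta_2$ (for $\mu$) and of $\kappa$ (for $\lambda$) all hold simultaneously. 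Since each of these open conditions is preserved under intersection and under small $C^2$-perturbation of the solutions, taking $\Omega_0$ as the intersection of the relevant neighborhoods and $\varepsilon$ smaller than the smallest of the required thresholds resolves this, and the theorem follows.
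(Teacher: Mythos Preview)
Your proposal is correct and follows precisely the approach the paper intends: it states just before the theorem that the result is deduced ``by applying Lemma \ref{prelim2} and Theorem \ref{mu},'' and your argument does exactly that, first verifying the hypotheses of Lemma \ref{prelim2} via the CGO construction and the $C^2$-perturbation argument, and then inserting the local $\mu$-stability from Theorem \ref{mu}. Your added remark about taking $\Omega_0$ small enough so that the linear-independence and nonvanishing conditions from both Sections 3.1 and 3.2 hold simultaneously is a helpful clarification the paper leaves implicit.
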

%\begin{proof}
%      By Theorem \ref{mu} and equation (\ref{lambda123}), we will get (\ref{lambdaequation}) and (\ref{lambda2}).
%\end{proof}

With (\ref{lambda123}) and Theorem \ref{lambda}, we follow the same proof as in Theorem \ref{mu2}, then we can get the following global reconstruction of $\lambda$.

\begin{theorem}(Global reconstruction of $\lambda$)
Let $\Omega$ be an open bounded domain of $\mathbb{R}^2$ with smooth boundary. Suppose that the Lam\'{e} parameters $(\lambda, \mu)$ and $(\tilde\lambda,\tilde\mu)\in \mathcal{P}$ and $\mu|_{\partial\Omega}=\tilde\mu|_{\partial\Omega}$.
Let $u^{(j)}$ and $\tilde u^{(j)}$ be the solutions of the elasticity system with boundary data $g^{(j)}$ for parameters $(\lambda, \mu)$ and $(\tilde\lambda,\tilde\mu)$, respectively.
Let $H=(u^{(j)})_{1\leq j\leq J}$ and $\tilde H=(\tilde u^{(j)})_{1\leq j\leq J}$ be the corresponding internal data for $(\lambda, \mu)$ and $(\tilde\lambda,\tilde\mu)$, respectively for some integer $J\geq 7$ .

Then there is an open set of the boundary data $(g^{(j)})_{1\leq j\leq J}$ such that
if $H=\tilde H $
implies $\lambda=\tilde \lambda$ in $\Omega$.\\
Moreover, we have the stability estimate
$$
     \|\lambda-\tilde\lambda\|_{C(\Omega)}\leq C\|H-\tilde H \|_{C^2(\Omega)}.
$$
\end{theorem}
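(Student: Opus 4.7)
The plan is to follow verbatim the covering argument used in the proof of Theorem \ref{mu2}, substituting the local $\lambda$-estimate of Theorem \ref{lambda} for the local $\mu$-estimate of Theorem \ref{mu}. First I would invoke Theorem \ref{mu2} itself, using the three CGO-type solutions constructed in Section \ref{localmu}, to obtain both $\mu=\tilde\mu$ in $\Omega$ and the global stability bound
$$
\|\mu-\tilde\mu\|_{C(\Omega)}\leq C\|H-\tilde H\|_{C^2(\Omega)}.
$$
This consumes three of the $J\geq 7$ slots; the remaining four slots are reserved for the four CGO solutions $u^{(0)}_\rho,\ldots,u^{(3)}_\rho$ built in Section \ref{reconslambda}, so the total count $J\geq 3+4=7$ is consistent with the hypothesis.

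Next, for every point $x\in\overline{\Omega}$ I would apply Theorem \ref{lambda} (boundary version for $x\in\partial\Omega$, interior version for $x\in\Omega$) to obtain a neighborhood $\Omega_x$ of $x$ on which
$$
\|\lambda-\tilde\lambda\|_{C(\Omega_x)}\leq C\bigl(|\mu(x^\star)-\tilde\mu(x^\star)|+\|H_x-\tilde H_x\|_{C^2(\Omega_x)}\bigr),
$$
where $x^\star=x$ if $x\in\partial\Omega$ and $x^\star\in\partial\Omega_x$ is some interior reference point otherwise. Because $\overline{\Omega}$ is compact, I would extract a finite subcover $\{\Omega_{x_1},\ldots,\Omega_{x_N}\}$. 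For any $z\in\Omega$ pick an $\Omega_{x_j}\ni z$: if $x_j\in\partial\Omega$ use the first alternative directly; if $\overline{\Omega}_{x_j}\cap\partial\Omega=\emptyset$, bound $|\mu(x_j^+)-\tilde\mu(x_j^+)|$ by the already-established global $\mu$-stability. In either case the right-hand side is controlled by $\|H-\tilde H\|_{C^2(\Omega)}$, and summing over the finite cover yields
$$
\|\lambda-\tilde\lambda\|_{C(\Omega)}\leq C\|H-\tilde H\|_{C^2(\Omega)}.
$$
Uniqueness is then immediate: if $H=\tilde H$, the right-hand side vanishes and $\lambda=\tilde\lambda$ in $\Omega$.

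The main technical point to verify is that a single open set of boundary data suffices simultaneously for every covering piece. The construction of Sections \ref{localmu} and \ref{reconslambda} only requires the chosen $g^{(j)}$ to lie within $C^{1,\alpha}$-distance $\varepsilon$ of specific CGO traces $g^{(j)}_\rho$ attached to each base point $x_l$. Because the subcover is finite, the intersection of these finitely many open neighborhoods in $C^{1,\alpha}(\partial\Omega)^J$ remains open and nonempty, so a common choice of boundary data $(g^{(j)})_{1\leq j\leq J}$ works for all the local reconstructions at once. Given this observation, the remainder is the routine chaining of local estimates already worked out for $\mu$, so I expect no further obstacle beyond bookkeeping.
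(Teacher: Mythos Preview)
Your argument is essentially the paper's own: it too says that with the local identity \eqref{lambda123} and the local stability of Theorem~\ref{lambda} in hand, one ``follows the same proof as in Theorem~\ref{mu2}'' to get the global statement. Your use of the already-established global $\mu$-stability to control the interior terms $|\mu(x_j^+)-\tilde\mu(x_j^+)|$ is a clean way to short-circuit the chaining, and is consistent with what the paper intends.

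One caveat about your final paragraph: the open $C^{1,\alpha}$-neighborhoods you are intersecting are centered at \emph{different} CGO traces $g^{(j)}_{\rho}$, because the CGO construction in Sections~\ref{localmu} and~\ref{reconslambda} is tailored to each base point $x_l$ (e.g.\ the normalizations $s^{(1)}(x_0)=0$, $\rho\cdot r^{(0)}(x_0)=1$ are imposed at $x_0$). A finite intersection of open balls with different centers need not be nonempty, so the argument as written does not show that a single set of seven boundary data works for every patch. The paper does not actually claim this either: in the proof of Theorem~\ref{mu2} the final estimate is $\|\mu-\tilde\mu\|_{C(\Omega)}\le C\sum_{l=1}^{N}\|H_{x_l}-\tilde H_{x_l}\|_{C^2(\Omega)}$, i.e.\ each patch carries its own data $H_{x_l}$, and the global $H$ is their union. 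The hypothesis ``$J\ge 7$'' should therefore be read as ``some finite $J$'', with the open set of boundary data being a product of the local $\varepsilon$-neighborhoods rather than an intersection.
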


\section{Reconstruction of Lam\'{e} parameter in three-dimensional case}\label{global3}

The reconstruction of $\lambda$ and $\mu$ in $\mathbb{R}^3$ basically follows the similar argument as in Section \ref{global2}. In $\mathbb{R}^3$, we need more CGO solutions to get linearly independent vectors locally .

\subsection{Global reconstruction of $\mu$ in 3D}\label{3reconsmu}
Let $u=(u_1, u_2, u_3)^T$ be the displacement which satisfies the elasticity system
\begin{align}\label{3uu}
     \nabla\cdot (\lambda(\nabla\cdot u)I+2S(\nabla u)\mu)+k^2 u=0.
\end{align}
Denote
$$
     u^\sharp=\left(\begin{array}{c}
         \partial_1(\nabla\cdot u) \\
         \partial_2(\nabla\cdot u) \\
         \partial_3(\nabla\cdot u) \\
         \nabla\cdot u \\
         \nabla\cdot u \\
         \nabla\cdot u \\
       \end{array}
     \right),\ \
      F=\left(
       \begin{array}{c}
         \lambda+\mu \\
         \lambda+\mu \\
         \lambda+\mu \\
         \partial_1 (\lambda+\mu) \\
         \partial_2 (\lambda+\mu) \\
         \partial_3 (\lambda+\mu) \\
       \end{array}
     \right),\ \
     u^\flat=     \left(
       \begin{array}{c}
                b_{23} \\
                 b_{13} \\
                  b_{12} \\
         \partial_1( b_{23}) \\
         \partial_2( b_{13} ) \\
         \partial_3(b_{12})\\
       \end{array}
     \right),\ \
    G=     \left(
       \begin{array}{c}
        \partial_1\mu \\
        \partial_2\mu \\
        \partial_3\mu \\
         \mu \\
         \mu \\
         \mu \\
       \end{array}
     \right),
$$
where $b_{ij}=\partial_l u_l-\partial_i u_i-\partial_j u_j+\partial_i u_l+\partial_l u_i+\partial_j u_l+\partial_l u_j$ with  $l,i,j\in \{1,2,3\}$ are distinct numbers.
From (\ref{3uu}), we can deduce the following equation:
$$
     u^\sharp\cdot F+u^\flat\cdot G=-k^2u^*.
$$
Here $u^*=(u_1+u_2+u_3)$.

In the following we will show that how we can get four linearly independent vectors of the form $u^\sharp$
on some subdomain of $\Omega$.
The key thing is to observe the behavior of $u^\sharp$. We plug the CGO solutions $u_\rho=\mu^{-1/2}w_\rho+\mu^{-1}\nabla f_\rho-f_\rho\nabla \mu^{-1}$
into $u^\sharp$.
Then we get
\begin{align}\label{3v}
     u^\sharp_\rho=e^{i\rho\cdot x}\left(\left(
       \begin{array}{c}
         -\frac{\sqrt{\mu}}{\lambda+2\mu}\rho_1(\rho\cdot r)+O(|\rho|)\\
         -\frac{\sqrt{\mu}}{\lambda+2\mu}\rho_2(\rho\cdot r)+O(|\rho|)\\
         -\frac{\sqrt{\mu}}{\lambda+2\mu}\rho_3(\rho\cdot r)+O(|\rho|)\\
         i\frac{\sqrt{\mu}}{\lambda+2\mu}\rho\cdot r\\
         i\frac{\sqrt{\mu}}{\lambda+2\mu}\rho\cdot r\\
         i\frac{\sqrt{\mu}}{\lambda+2\mu}\rho\cdot r  \\
       \end{array}
     \right)
     +O(1)\right).
\end{align}
Note that $r=(r_1,r_2, r_3)^T$ and $\rho=(\rho_1,\rho_2, \rho_3)^T$.

Now we fix any point $x_0\in \overline\Omega$. Let $\rho=\tau(1,i,0)$ and $\tilde {\tilde\rho}=\tau(1, 0, i)$ with $\tau>0$. Let $\theta=\rho/\tau,\ \tilde{\tilde\theta}=\tilde{\tilde\rho}/\tau$. Since, in Lemma \ref{Eskin}, the matrix solutions $C_0(x,\theta)$ and $\tilde{\tilde C}_0(x,\tilde{\tilde\theta})$ are invertible, we can choose two constant vectors $g^{(0)}$ and $\tilde{\tilde g}^{(0)}$ such that $C_0(x,\theta)g^{(0)}=(r^{(0)},s^{(0)})^T$ and $\tilde{\tilde C}_0(x,\tilde{\tilde \theta})\tilde{\tilde g}^{(0)}=(\tilde{\tilde r}^{(0)},\tilde{\tilde s}^{(0)})^T$ with $s^{(0)}(x_0)=1=\tilde{\tilde s}^{(0)}(x_0)$ and $s^{(0)},\ \tilde{\tilde s}^{(0)}\neq 0$ and $\rho\cdot r^{(0)},\ \tilde{\tilde \rho}\cdot \tilde{\tilde r}^{(0)}\neq 0$ in a neighborhood of $x_0$, say $U_0$.
Then we have the CGO solutions of the elasticity system, that is,
\begin{align*}
u^{(0)}_\rho=&\ \mu^{-1/2}w^{(0)}_\rho+\mu^{-1}\nabla f^{(0)}_\rho-f^{(0)}_\rho\nabla\mu^{-1},\\
\tilde{\tilde u}^{(0)}_\rho=&\ \mu^{-1/2}\tilde{\tilde w}^{(0)}_\rho+\mu^{-1}\nabla \tilde{\tilde f}^{(0)}_\rho-\tilde{\tilde f}^{(0)}_\rho\nabla\mu^{-1}
\end{align*}
with
\begin{align*}
\left(\begin{array}{c}
   w^{(0)}_\rho \\
   f^{(0)}_\rho
\end{array}\right)
=&\ e^{i\rho\cdot x}\left(\left(
                         \begin{array}{c}
                           r^{(0)} \\
                           s^{(0)}\\
                         \end{array}
                       \right)
+O(\tau^{-1})\right),\\
\left(\begin{array}{c}
   \tilde{\tilde w}^{(0)}_\rho \\
   \tilde{\tilde f}^{(0)}_\rho
\end{array}\right)
=&\ e^{i\tilde{\tilde \rho}\cdot x}\left(\left(
                         \begin{array}{c}
                           \tilde{\tilde r}^{(0)} \\
                           \tilde{\tilde s}^{(0)}\\
                         \end{array}
                       \right)
+O(\tau^{-1})\right).
\end{align*}

Let $\tilde\rho=\tau(i,-1,0)$ and $\tilde\theta=\tilde\rho/\tau$. Let $C_1(x,\theta)$ and $C_2(x,\tilde\theta)$  satisfy that
$$
        i\theta\cdot \frac{\partial}{\partial x}C_1(x,\theta)=\theta \cdot V_1(x)C_1(x,\theta),\ \ \ i\tilde\theta\cdot \frac{\partial}{\partial x}C_2(x,\tilde\theta)=\tilde\theta \cdot V_1(x)C_2(x,\tilde\theta),
$$
respectively.
Since $\tilde\rho=i\rho$, we can choose $C_2(x,\tilde\theta)=C_1(x,\theta)$. Moreover, $r^{(2)}=r^{(1)}$ and $s^{(2)}=s^{(1)}$.
With suitable constant vector $g$, we can get that $s^{(l)}$ is zero at point $x_0$ and $r^{(1)}(x_0)=(1, -i, 0)=r^{(2)}(x_0)$. By continuity of $r^{(l)}$, we have $\rho\cdot r^{(l)}\neq 0$ in a neighborhood $U_1$ of $x_0$.  Then the CGO solution is
\begin{align*}
 u_\rho^{(l)}=\mu^{-1/2}w_\rho^{(l)}+\mu^{-1}\nabla f_\rho^{(l)}-f_\rho^{(l)}\nabla\mu^{-1}
\end{align*}
with
\begin{align*}
\left(\begin{array}{c}
   w_\rho^{(1)} \\
   f_\rho^{(1)}
\end{array}\right)
=&\ e^{i\rho\cdot x}\left(\left(
                         \begin{array}{c}
                           r^{(1)} \\
                           s^{(1)}\\
                         \end{array}
                       \right)
+O(\tau^{-1})\right), \\
 \left(\begin{array}{c}
   w_\rho^{(2)} \\
   f_\rho^{(2)}
\end{array}\right)
=&\ e^{i\tilde\rho\cdot x}\left(\left(
                         \begin{array}{c}
                           r^{(2)} \\
                           s^{(2)}\\
                         \end{array}
                       \right)
+O(\tau^{-1})\right).
\end{align*}

For $\tilde{\tilde \rho}$, with a suitable constant vector $\tilde g$, we can get that $s^{(3)}$ is zero at point $x_0$ and  $r^{(3)}(x_0)=(1, 0, -i)$. By continuity of $r^{(3)}$, we have $\tilde{\tilde \rho}\cdot r^{(3)}\neq 0$ in a neighborhood $U_2$ of $x_0$.  Then the CGO solution is
\begin{align*}
 u_\rho^{(3)}=\mu^{-1/2}w_\rho^{(3)}+\mu^{-1}\nabla f_\rho^{(3)}-f_\rho^{(3)}\nabla\mu^{-1}
\end{align*}
with
\begin{align*}
\left(\begin{array}{c}
   w_\rho^{(3)} \\
   f_\rho^{(3)}
\end{array}\right)
=e^{i\tilde{\tilde\rho} \cdot x}\left(\left(
                         \begin{array}{c}
                           r^{(3)} \\
                           s^{(3)}\\
                         \end{array}
                       \right)
+O(\tau^{-1})\right).
\end{align*}

Let $U=\cap_{l=0}^2 U_l$.
So far we have five CGO solutions, that is, $u_\rho^{(0)}, \tilde {\tilde u}_\rho^{(0)}, u_\rho^{(1)},  u_\rho^{(2)}$, and $u_\rho^{(3)}$.

Let $r^{(1)}=(r^{(1)}_1, r^{(1)}_2, r^{(1)}_3)$. We define
\begin{align*}
    \mathfrak{u}^1_\rho:=e^{-i\rho\cdot x}|\rho|^{-2}u^{(1)\sharp}_\rho=& \frac{1}{2}\left(\begin{array}{c}
         -\frac{\sqrt{\mu}}{\lambda+2\mu}(r^{(1)}_1+ir^{(1)}_2) \\
         -\frac{\sqrt{\mu}}{\lambda+2\mu}(ir^{(1)}_1-r^{(1)}_2)\\
         0\\
         0\\
         0 \\
         0 \\
       \end{array}
       \right)
     +O(\tau^{-1})
\end{align*}
and
\begin{align*}
    \mathfrak{u}^{2,1}_\rho:=e^{-i\rho\cdot x}|\rho|^{-1}u^{(1)\sharp}_\rho=& \frac{1}{\sqrt{2}}\left(\begin{array}{c}
         -\frac{\sqrt{\mu}}{\lambda+2\mu}\tau(r^{(1)}_1+ir^{(1)}_2)+O(1)\\
         -\frac{\sqrt{\mu}}{\lambda+2\mu}\tau(ir^{(1)}_1-r^{(1)}_2)+O(1)\\
         0\\
         i\frac{\sqrt{\mu}}{\lambda+2\mu}( r^{(1)}_1+ir^{(1)}_2) \\
         i\frac{\sqrt{\mu}}{\lambda+2\mu}( r^{(1)}_1+ir^{(1)}_2) \\
         i\frac{\sqrt{\mu}}{\lambda+2\mu}( r^{(1)}_1+ir^{(1)}_2) \\
       \end{array}
       \right)
     +O(\tau^{-1})
\end{align*}
and
\begin{align*}
    \mathfrak{u}^{2,2}_\rho:=e^{-i\tilde\rho\cdot x}|\tilde\rho|^{-1}u^{(2)\sharp}_\rho=& \frac{1}{\sqrt{2}}\left(\begin{array}{c}
         -\frac{\sqrt{\mu}}{\lambda+2\mu}\tau(-r^{(2)}_1-ir^{(2)}_2)+O(1) \\
         -\frac{\sqrt{\mu}}{\lambda+2\mu}\tau(-ir^{(2)}_1+r^{(2)}_2)+O(1) \\
         0\\
         i\frac{\sqrt{\mu}}{\lambda+2\mu}( ir^{(2)}_1-r^{(2)}_2) \\
         i\frac{\sqrt{\mu}}{\lambda+2\mu}( ir^{(2)}_1-r^{(2)}_2) \\
         i\frac{\sqrt{\mu}}{\lambda+2\mu}( ir^{(2)}_1-r^{(2)}_2) \\
       \end{array}
       \right)
     +O(\tau^{-1}).
\end{align*}
Let $\mathfrak{u}^2_\rho=\mathfrak{u}^{2,1}_\rho+\mathfrak{u}^{2,2}_\rho$. Since $r^{(2)}=r^{(1)}$ and $s^{(2)}=s^{(1)}$, we have
\begin{align*}
    \mathfrak{u}^2_\rho= \frac{1}{\sqrt{2}}\left(\begin{array}{c}
         O(1) \\
         O(1) \\
         0\\
         i\frac{\sqrt{\mu}}{\lambda+2\mu}( (1+i)r^{(1)}_1+(-1+i)r^{(1)}_2) \\
         i\frac{\sqrt{\mu}}{\lambda+2\mu}( (1+i)r^{(1)}_1+(-1+i)r^{(1)}_2) \\
         i\frac{\sqrt{\mu}}{\lambda+2\mu}((1+i)r^{(1)}_1+(-1+i)r^{(1)}_2) \\
       \end{array}
       \right)
     +O(\tau^{-1}).
\end{align*}
We also define
\begin{align*}
    \mathfrak{u}^3_\rho:=e^{-i\tilde{\tilde\rho}\cdot x}|\rho|^{-2}u^{(3)\sharp}_\rho=& \frac{1}{2}\left(\begin{array}{c}
         -\frac{\sqrt{\mu}}{\lambda+2\mu}(r^{(3)}_1+ir^{(3)}_3) \\
         0\\
         -\frac{\sqrt{\mu}}{\lambda+2\mu}(ir^{(3)}_1-r^{(3)}_3)\\
         0\\
         0 \\
         0 \\
       \end{array}
       \right)
     +O(\tau^{-1})
\end{align*}

We denote
\begin{align*}
        \mathfrak{u}^{(0)\natural}_{1\rho}=\Re{e^{-i\rho\cdot x}|\rho|^{-2} u^{(0)\natural}_\rho};\ \ \mathfrak{u}^{(0)\natural}_{2\rho}=\Im{e^{-i\rho\cdot x}|\rho|^{-2} u^{(0)\natural}_\rho};\\
        \mathfrak{u}^{(0)\natural}_{3\rho}=\Re{e^{-i\tilde{\tilde\rho}\cdot x}|\rho|^{-2} \tilde{\tilde u}^{(0)\natural}_\rho};\ \ \ \natural=\sharp,\flat,*
\end{align*}
and
\begin{align*}
\mathfrak{u}^{(1)\natural}_\rho=\Re e^{-i\rho\cdot x}|\rho|^{-2}u^{(1)\natural}_\rho ;\ \
\mathfrak{u}^{(2)\natural}_\rho=\Im e^{-i\rho\cdot x}|\rho|^{-2}u^{(1)\natural}_\rho;\\
\mathfrak{u}^{(3)\natural}_\rho=\Re\left(e^{-i\rho\cdot x}|\rho|^{-1}u^{(1)\natural}_\rho+e^{-i\tilde\rho\cdot x}|\tilde\rho|^{-1}u^{(2)\natural}_\rho\right);\ \
\mathfrak{u}^{(4)\natural}_\rho=\Re e^{-i\tilde{\tilde\rho}\cdot x}|\rho|^{-2}u^{(3)\natural}_\rho.
\end{align*}

Then $\left\{\mathfrak{u}^{(j)\sharp}_\rho:\  1\leq j\leq 4\right\}$ are linearly independent in the neighborhood $U$ of $x_0$ as $\tau$ is sufficiently large. Therefore, for fixed $l=1,2,3$ there exist functions $\Theta^l_j,\  j=1,2,3,4,$ such that
$$
\mathfrak{u}^{(0)\sharp}_{l\rho} +\sum^4_{j=1}\Theta_j^l\mathfrak{u}^{(j)\sharp}_{\rho} =0.
$$

As in Section \ref{localmu}, we summing over equations, then
we have
$$
     \beta_{\rho,l}\cdot \nabla\mu
     +
     \gamma_{\rho,l} \mu
=-k^2\left(\mathfrak{u}^{(0)*}_{l\rho} +\sum^4_{j=1}\Theta_j^l\mathfrak{u}^{(j)*}_{\rho}\right) \ \ \ \hbox{ for $l=1, 2, 3$,}
$$
where
$\beta_{\rho,l}$ and $\gamma_{\rho,l}$ are functions which depend on $\rho, \Omega$ and CGO solutions $\tilde {\tilde u}_\rho^{(0)},u_\rho^{(j)}$ for $j=0,\dots, 3$.

\begin{lemma}\label{3beta11}
Given any point $x_0\in \overline\Omega$, there exists an open neighborhood $V$ of $x_{0}$ such that $\beta_{\rho,l}$ is not zero in $V$, for $l=1, 2, 3$.
\end{lemma}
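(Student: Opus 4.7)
The plan is to proceed in direct analogy with Lemma \ref{beta11}, treating each index $l=1,2,3$ separately but uniformly. Recall that
\[
\beta_{\rho,l} = \sum_{k=1}^{3}(\mathrm{v}_l\cdot e_k)\tilde e_k,\qquad
\mathrm{v}_l = \mathfrak{u}^{(0)\flat}_{l\rho}+\sum_{j=1}^{4}\Theta^l_j\,\mathfrak{u}^{(j)\flat}_\rho,
\]
and from the linear system defining the $\Theta^l_j$ one has $\Theta^l_j \sim \tau^0$. The first step is to compute the projection onto $(\tilde e_1,\tilde e_2,\tilde e_3)$ of each $\mathfrak{u}^{(j)\flat}_\rho$ at $x_0$. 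Using that $s^{(k)}(x_0)=0$ for $k=1,2,3$, that $r^{(1)}=r^{(2)}$ with $(1,-i,0)\cdot r^{(1)}(x_0)=0$, and that $(1,0,-i)\cdot r^{(3)}(x_0)=0$, all the first-three-coordinate projections of $\mathfrak{u}^{(1)\flat}_\rho$, $\mathfrak{u}^{(2)\flat}_\rho$, $\mathfrak{u}^{(3)\flat}_\rho$ at $x_0$ collapse to $O(\tau^{-1})$, exactly as the analogous projections did in two dimensions. Only $\mathfrak{u}^{(4)\flat}_\rho$ (coming from $u^{(3)}_\rho$) retains an $O(1)$ contribution, which is an explicit expression in $\partial_i s^{(3)}(x_0)$, $r^{(3)}(x_0)$, and derivatives of $\mu$.

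The second step is to identify the leading $O(1)$ term coming from $\mathfrak{u}^{(0)\flat}_{l\rho}$ (and from $\tilde{\tilde u}^{(0)\flat}_\rho$ in the $l=3$ slot). Since $s^{(0)}(x_0)=\tilde{\tilde s}^{(0)}(x_0)=1$, the computation reduces to taking $\Re$ or $\Im$ of the constant vectors $\mu^{-1}(1,i,0)$ and $\mu^{-1}(1,0,i)$ pushed through the $u^\flat$ template involving the $b_{ij}$ combinations. This yields three explicit nonzero constant vectors $\mathbf{c}_1,\mathbf{c}_2,\mathbf{c}_3\in\mathbb{R}^3$, playing the role that $\mu^{-1}(2,-2)^T$ and $\mu^{-1}(2,2)^T$ played in the planar case. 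The non-vanishing of each $\mathbf{c}_l$ is the one explicit check to carry out.

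Third, to keep the $\Theta^l_4$ term from spoiling the leading behavior one invokes the rescaling trick of Remark \ref{rk}: replace $g^{(0)}$ (and $\tilde{\tilde g}^{(0)}$ for $l=3$) by $\hat g^{(0)}/M$ with $M$ large, which forces $|\Theta^l_4(x_0)|$ to be arbitrarily small uniformly for $l=1,2,3$ and $\tau$ large. Combining everything,
\[
\beta_{\rho,l}(x_0) \sim \mu^{-1}\mathbf{c}_l \;+\; \Theta^l_4(x_0)\,\mathbf{d}_l(x_0) \;+\; O(\tau^{-1}) \;\neq\; 0
\]
for each $l=1,2,3$ once $M$ and $\tau$ are chosen large enough. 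Continuity of $\beta_{\rho,l}$ then produces a common open neighborhood $V$ of $x_0$ in which none of the $\beta_{\rho,l}$ vanishes.

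I expect the main obstacle to be bookkeeping rather than a deep conceptual step: in three dimensions there are four CGO directions, a larger $u^\flat$ template built out of the three combinations $b_{12}, b_{13}, b_{23}$, and two base solutions $u^{(0)}_\rho,\tilde{\tilde u}^{(0)}_\rho$ to coordinate, so one must check carefully that the $O(\tau)$ and $O(\tau^{1/2})$ pieces of each $\mathfrak{u}^{(j)\flat}_\rho$ really are annihilated at $x_0$ by our specific choices of $r^{(j)}(x_0)$ and by $s^{(j)}(x_0)=0$. Once that vanishing is confirmed component by component, the rest of the argument is a transcription of the two-dimensional proof.
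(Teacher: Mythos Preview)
Your overall strategy is exactly what the paper does: it simply writes ``Following the similar proof as in Lemma~\ref{beta11}, we can prove this Lemma,'' so a direct transcription of the two--dimensional argument is the intended route.

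There is, however, a concrete bookkeeping mix-up in your outline. You assert that the first-three-coordinate projections of $\mathfrak{u}^{(1)\flat}_\rho,\mathfrak{u}^{(2)\flat}_\rho,\mathfrak{u}^{(3)\flat}_\rho$ all collapse to $O(\tau^{-1})$ at $x_0$ and that only $\mathfrak{u}^{(4)\flat}_\rho$ retains an $O(1)$ piece; it is the other way around. The vectors $\mathfrak{u}^{(1)\flat}_\rho,\mathfrak{u}^{(2)\flat}_\rho,\mathfrak{u}^{(4)\flat}_\rho$ all carry the scaling $|\rho|^{-2}$ and are built from $u^{(1)}_\rho$ or $u^{(3)}_\rho$ with $s^{(1)}(x_0)=s^{(3)}(x_0)=0$, so their first three components are $O(\tau^{-1})$ at $x_0$ for the same reason as in (\ref{g})--(\ref{gg}). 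By contrast, $\mathfrak{u}^{(3)\flat}_\rho$ is the \emph{sum} term scaled by $|\rho|^{-1}$; after the cancellation of the $O(\tau)$ parts (using $r^{(2)}=r^{(1)}$, $s^{(2)}=s^{(1)}$, $\tilde\rho=i\rho$) one is left with an $O(1)$ contribution in $\nabla s^{(1)}(x_0)$, exactly the three--dimensional analogue of (\ref{ggg}). Consequently it is $\Theta^l_3$, not $\Theta^l_4$, that must be made small via the rescaling of Remark~\ref{rk}, and your displayed asymptotic should read
\[
\beta_{\rho,l}(x_0)\ \sim\ \mu^{-1}\mathbf{c}_l \;+\; \Theta^l_3(x_0)\,\mathbf{d}_l(x_0)\ \neq\ 0.
\]
(There are also no $O(\tau^{1/2})$ pieces to check; all the asymptotics are in integer powers of $\tau$.) With these index corrections your argument goes through and matches the paper's.
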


\begin{proof}
Following the similar proof as in Lemma \ref{beta11}, we can prove this Lemma.

\end{proof}

Let $\Omega_0=U\cap V\cap \Omega$. Based on the lemma above, we may suppose that $\beta_{\rho,j}$ are linearly independent in $\Omega_0$ as $\tau$ sufficiently large.

Let $g^{(j)}_\rho=u_\rho^{(j)}|_{\partial\Omega}$ and $\tilde{\tilde g}^{(0)}_\rho=\tilde {\tilde u}_\rho^{(0)}|_{\partial\Omega}$ be the given boundary data for $j=0,1,2,3$. Let $\tilde{\tilde g}^{(0)},\ g^{(j)}\in C^{1,\alpha}(\partial\Omega)$ be the boundary data close to $\tilde{\tilde g}^{(0)}_\rho,\ g^{(j)}_\rho $, respectively, that is,
\begin{align*}
\|g^{(j)}-g^{(j)}_\rho\|_{C^{1,\alpha}(\partial\Omega)}< \varepsilon,\ \ \ \|\tilde{\tilde g}^{(0)}-\tilde {\tilde g}^{(0)}_\rho\|_{C^{1,\alpha}(\partial\Omega)}< \varepsilon.
\end{align*}
Then we can find solutions $\tilde{\tilde u}^{(0)},\ u^{(j)}$ of the elasticity system with boundary data $\tilde{\tilde g}^{(0)},g^{(j)}$, respectively. By regularity theorem, it follows that
\begin{align}\label{3uuu}
    \|u^{(j)}-u^{(j)}_\rho\|_{C^{2}(\Omega)}
      <C \varepsilon,\ \ \ \ \ \|\tilde{\tilde u}^{(0)}-\tilde {\tilde u}^{(0)}_\rho\|_{C^{2}(\Omega)}
      <C \varepsilon.
\end{align}
From (\ref{3uuu}), we have
\begin{align*}
     \|\mathfrak{u}^{(j)\sharp}-\mathfrak{u}^{(j)\sharp}_\rho\|_{C(\Omega)}\leq C\varepsilon,\ \ 1\leq j\leq 4.
\end{align*}
Therefore, $\left\{\mathfrak{u}^{(j)\sharp}:\  1\leq j\leq 4\right\}$ are also linearly independent when $\varepsilon$ is sufficiently small.
We construct $\beta_l$ by replacing $u^{(j)}_\rho$ by $u^{(j)}$. Then from (\ref{3uuu}), we can deduce that
$$
     \|\beta_l-\beta_{\rho,l}\|_{C^1(\Omega)}
$$
is small when $\varepsilon$ is sufficiently small.
Then we have the following equation
\begin{align}\label{3muj}
     \beta_l\cdot\nabla\mu+\gamma_l\mu=-k^2\left(\mathfrak{u}^{(0)*}_{l} +\sum^4_{j=1}\Theta_j^l\mathfrak{u}^{(j)*}\right),\ \ \ \ l=1,2,3,
\end{align}
with $\{\beta_l(x)\}_{l=1,2,3}$ a basis in $\mathbb{R}^3$ for every point $x\in \Omega_0$.
There exists an invertible matrix $A=(a_{ij})$ such that $\beta_l=\sum a_{lk}e_k$ with inverse of class $C(\Omega)$. Thus, we have constructed two vector-valued functions $\Gamma(x),\ \Phi(x)\in C(\Omega)$ such that (\ref{3muj}) can be rewritten as
\begin{align}
     \nabla\mu+\Gamma(x)\mu=\Phi(x)\ \ \ \ \ \hbox{in\ $\Omega_0$}.
\end{align}

Then we have the following uniqueness and stability theorem.

\begin{theorem}(Global reconstruction of $\mu$)\label{mu3}
Let $\Omega$ be an open bounded domain of $\mathbb{R}^3$ with smooth boundary. Suppose that the Lam\'{e} parameters $(\lambda, \mu)$ and $(\tilde\lambda,\tilde\mu)\in \mathcal{P}$ and $\mu|_{\partial\Omega}=\tilde\mu|_{\partial\Omega}$.
Let $u^{(j)}$ and $\tilde u^{(j)}$ be the solutions of the elasticity system with boundary data $g^{(j)}$ for parameters $(\lambda, \mu)$ and $(\tilde\lambda,\tilde\mu)$, respectively.
Let $H=(u^{(j)})_{1\leq j\leq J}$ and $\tilde H=(\tilde u^{(j)})_{1\leq j\leq J}$ be the corresponding internal data for $(\lambda, \mu)$ and $(\tilde\lambda,\tilde\mu)$, respectively for some integer $J\geq 5$ .

Then there is an open set of the boundary data $(g^{(j)})_{1\leq j\leq J}$ such that
if $H=\tilde H $
implies $\mu=\tilde \mu$ in $\Omega$.\\
Moreover, we have the stability estimate
$$
     \|\mu-\tilde\mu\|_{C(\Omega)}\leq C\|H-\tilde H \|_{C^2(\Omega)}.
$$
\end{theorem}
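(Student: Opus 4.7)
The plan is to mimic the two-dimensional global reconstruction in Theorem \ref{mu2}, with the local ODE $\nabla\mu+\Gamma(x)\mu=\Phi(x)$ now derived from the five CGO solutions $u_\rho^{(0)},\tilde{\tilde u}_\rho^{(0)},u_\rho^{(1)},u_\rho^{(2)},u_\rho^{(3)}$ constructed in this subsection. The key inputs have already been prepared: Lemma \ref{3beta11} guarantees that the three vectors $\beta_{\rho,1},\beta_{\rho,2},\beta_{\rho,3}$ are nonvanishing, and by the same construction they can be shown to form a basis of $\mathbb{R}^3$ on a neighborhood $\Omega_0$ of a given $x_0$ when $\tau$ is large; the near-CGO perturbation argument ensured by (\ref{3uuu}) then carries linear independence over to the honest solutions $u^{(j)}$ obtained from nearby boundary data $g^{(j)}$.

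First, I would state the local uniqueness and stability result parallel to Theorem \ref{mu} and Theorem \ref{mulocal}: given any $x_0\in\overline\Omega$, one obtains a neighborhood $\Omega_0$ of $x_0$ on which $\mu,\tilde\mu$ both satisfy the same first-order linear transport equation $\nabla\mu+\Gamma(x)\mu=\Phi(x)$ with continuous coefficients $\Gamma,\Phi$ depending only on the internal data $H_{x_0}=(\tilde{\tilde u}^{(0)},u^{(0)},\dots,u^{(3)})$. Integrating this ODE along straight line segments $\psi(t)=(1-t)x_0^++tx$ then yields, exactly as in the proof of Lemma \ref{prelim},
\begin{align*}
\|\mu-\tilde\mu\|_{C(\Omega_0)}\leq C\bigl(|\mu(x_0^+)-\tilde\mu(x_0^+)|+\|H_{x_0}-\tilde H_{x_0}\|_{C^2(\Omega_0)}\bigr),
\end{align*}
with $x_0^+\in\partial\Omega_0$; when $x_0\in\partial\Omega$ the hypothesis $\mu|_{\partial\Omega}=\tilde\mu|_{\partial\Omega}$ eliminates the boundary term and gives the pure stability bound in terms of the data alone.

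Next I would globalize by a compactness/chain argument identical to the one in Theorem \ref{mu2}. Cover $\overline\Omega$ by open balls $B_x$, $x\in\overline\Omega$, on each of which the local stability inequality above holds; extract a finite subcover $B_{x_1},\dots,B_{x_N}$. For any $z\in\Omega$, choose $B_{x_j}\ni z$. If $\overline{B}_{x_j}\cap\partial\Omega\neq\emptyset$ then $x_j$ can be taken on $\partial\Omega$ and the boundary version of the local estimate gives $|\mu(z)-\tilde\mu(z)|\leq C\|H_{x_j}-\tilde H_{x_j}\|_{C^2(\Omega)}$ directly. Otherwise, iterate: the interior version bounds $|\mu(z)-\tilde\mu(z)|$ by $|\mu(x_j^+)-\tilde\mu(x_j^+)|$ plus data, and the chain reaches the boundary in at most $N-1$ steps, so after telescoping one obtains
\begin{align*}
\|\mu-\tilde\mu\|_{C(\Omega)}\leq C\sum_{l=1}^N\|H_{x_l}-\tilde H_{x_l}\|_{C^2(\Omega)}\leq C\|H-\tilde H\|_{C^2(\Omega)},
\end{align*}
and in particular $H=\tilde H$ forces $\mu=\tilde\mu$.

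The counting $J\geq 5$ is accounted for by the five CGO solutions used in the construction (one $\tilde{\tilde u}_\rho^{(0)}$, one $u_\rho^{(0)}$ plus $u_\rho^{(1)},u_\rho^{(2)},u_\rho^{(3)}$). The ``open set of boundary data'' is obtained from (\ref{3uuu}) by choosing any $g^{(j)}$ within an $\varepsilon$-ball of the CGO traces, $\varepsilon$ small enough so that both the linear independence of $\{\mathfrak{u}^{(j)\sharp}\}_{j=1}^4$ and the invertibility of the $\beta$-matrix persist by continuity. The main obstacle I anticipate is not the chain argument, which is purely combinatorial, but rather verifying uniformly in $x_0\in\overline\Omega$ that the local radii of the neighborhoods $\Omega_0$ admit a positive lower bound; this requires that the CGO construction (in particular the polynomials $g(\theta\cdot x)$ used to fix $s^{(l)},r^{(l)}$ at $x_0$) depends continuously on $x_0$, which follows from Lemma \ref{Eskin} together with the smooth dependence of $C_0(x,\theta)$ on $\theta$, and the finite subcover step then absorbs any remaining nonuniformity.
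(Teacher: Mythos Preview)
Your proposal is correct and follows exactly the approach the paper intends: the paper's own proof of this theorem consists of the single sentence ``The proof is similar to Theorem \ref{mu2},'' and what you have written is precisely that argument spelled out, with the local transport equation coming from the five CGO solutions of this subsection and the globalization carried out via the same finite-cover/chain estimate as in the 2D case.
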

\begin{proof}
  The proof is similar to Theorem \ref{mu2}.
\end{proof}

\subsection{Global reconstruction of $\lambda$ in 3D}\label{3reconslambda}
The elasticity system can also be written in this form
    \begin{align}\label{3equ}
     u^\sharp\cdot F
     +
    u^\flat\cdot G
     =-k^2u^*,
\end{align}
where
$$
   u^\sharp= \left(\begin{array}{c}
         \nabla\cdot u \\
         \nabla\cdot u \\
         \nabla\cdot u \\
           b_{23} \\
           b_{13} \\
           b_{12} \\
       \end{array}
     \right),\ \
     F=\left(
       \begin{array}{c}
         \partial_1 (\lambda+\mu) \\
         \partial_2 (\lambda+\mu) \\
         \partial_3 (\lambda+\mu) \\
         \partial_1\mu \\
        \partial_2\mu \\
        \partial_3\mu \\
       \end{array}
     \right),\ \
     u^\flat=    \left(
       \begin{array}{c}
          \partial_1(\nabla\cdot u) \\
         \partial_2(\nabla\cdot u) \\
         \partial_3(\nabla\cdot u) \\
         \partial_1( b_{23}) \\
         \partial_2( b_{13} ) \\
         \partial_3(b_{12})\\
       \end{array}
     \right)
,\ \
  G=     \left(
       \begin{array}{c}
         \lambda+\mu \\
         \lambda+\mu \\
         \lambda+\mu \\
         \mu \\
         \mu \\
         \mu \\
       \end{array}
     \right).
$$
As in the reconstruction of $\mu$, we will construct CGO solutions such that the first term of (\ref{3equ}) can be eliminated.

Plug the CGO solution $ u_\rho=\mu^{-1/2}w_\rho+\mu^{-1}\nabla f_\rho-f_\rho\nabla\mu^{-1}$ into $u^\sharp$.
Then we get
\begin{align*}
     u^\sharp_\rho=e^{i\rho\cdot x}\left(\left(
       \begin{array}{c}
         i\frac{\sqrt{\mu}}{\lambda+2\mu}\rho\cdot r \\
         i\frac{\sqrt{\mu}}{\lambda+2\mu}\rho\cdot r \\
         i\frac{\sqrt{\mu}}{\lambda+2\mu}\rho\cdot r \\
         -2\mu^{-1}\rho_1(\rho_1+\rho_2+\rho_3)s+O(|\rho|) \\
         -2\mu^{-1}\rho_2(\rho_1+\rho_2+\rho_3)s+O(|\rho|) \\
         -2\mu^{-1}\rho_3(\rho_1+\rho_2+\rho_3)s+O(|\rho|) \\
        \end{array}
     \right)
     +O(1)\right).
\end{align*}

For the same fixed point $x_0\in \overline\Omega$.
We choose a constant vector $g^{(0)}$ such that $C_0(x,\theta)g^{(0)}=(r^{(0)}, s^{(0)})$ with $s^{(0)}\neq 0$ and $\rho\cdot r^{(0)}\neq 0$ in a neighborhood of $x_0$, say $U_0$, and $\rho\cdot r^{(0)}(x_0)=1$. Then we get the CGO solution of the elasticity system
$$
  u_\rho^{(0)}=\mu^{-1/2}w_\rho^{(0)}+\mu^{-1}\nabla f_\rho^{(0)}-f_\rho^{(0)}\nabla \mu^{-1}
$$
with
\begin{align*}
\left(\begin{array}{c}
   w_\rho^{(0)} \\
   f_\rho^{(0)}
\end{array}\right)
=e^{i\rho\cdot x}\left(\left(
                         \begin{array}{c}
                           r^{(0)} \\
                           s^{(0)}\\
                         \end{array}
                       \right)
+O(\tau^{-1})\right).
\end{align*}

We choose another constant vector $g^{(1)}$ such that $C_1(x,\theta)g^{(1)}=(r^{(1)}, s^{(1)})$ with $s^{(1)}\neq 0$ in a neighborhood of $x_0$, say $U_1$, and $\rho\cdot r^{(1)}(x_0)=0$. Then we get the CGO solution of the elasticity system
$$
  u_\rho^{(1)}=\mu^{-1/2}w_\rho^{(1)}+\mu^{-1}\nabla f_\rho^{(1)}-f_\rho^{(1)}\nabla \mu^{-1}
$$
with
\begin{align*}
\left(\begin{array}{c}
   w_\rho^{(1)} \\
   f_\rho^{(1)}
\end{array}\right)
=e^{i\rho\cdot x}\left(\left(
                         \begin{array}{c}
                           r^{(1)} \\
                           s^{(1)}\\
                         \end{array}
                       \right)
+O(\tau^{-1})\right).
\end{align*}

For $l=2,3$, we choose a constant vector $g^{(l)}$ such that $C_l(x,\theta)g^{(l)}=(r^{(l)}, s^{(l)})$ with $\rho\cdot r^{(l)}\neq 0$ in a neighborhood of $x_0$, say $U_2$. Here we can choose $r^{(2)}=r^{(3)},\ s^{(2)}=s^{(3)}$ by taking $g^{(2)}=g^{(3)}$ and $C_2(x,\theta)=C_3(x,\tilde\theta)$. Then we get the CGO solution of the elasticity system
$$
  u_\rho^{(l)}=\mu^{-1/2}w_\rho^{(l)}+\mu^{-1}\nabla f_\rho^{(l)}-f_\rho^{(l)}\nabla \mu^{-1}
$$
with
\begin{align*}
\left(\begin{array}{c}
   w_\rho^{(2)} \\
   f_\rho^{(2)}
\end{array}\right)
=&\ e^{i\rho\cdot x}\left(\left(
                         \begin{array}{c}
                           r^{(2)} \\
                           s^{(2)}\\
                         \end{array}
                       \right)
+O(\tau^{-1})\right),\\
 \left(\begin{array}{c}
   w_\rho^{(3)} \\
   f_\rho^{(3)}
\end{array}\right)
=&\ e^{i\tilde\rho\cdot x}\left(\left(
                         \begin{array}{c}
                           r^{(3)} \\
                           s^{(3)}\\
                         \end{array}
                       \right)
+O(\tau^{-1})\right).
\end{align*}
For $\tilde{\tilde\rho}$, we choose another constant vector $g^{(4)}$ such that $C_4(x,\theta)g^{(4)}=(r^{(4)}, s^{(4)})$ with $s^{(4)}\neq 0$ in a neighborhood of $x_0$, say $U_3$, and $\tilde{\tilde\rho}\cdot r^{(4)}(x_0)=0$. Then we get the CGO solution of the elasticity system
$$
  u_\rho^{(4)}=\mu^{-1/2}w_\rho^{(4)}+\mu^{-1}\nabla f_\rho^{(4)}-f_\rho^{(4)}\nabla \mu^{-1}
$$
with
\begin{align*}
\left(\begin{array}{c}
   w_\rho^{(4)} \\
   f_\rho^{(4)}
\end{array}\right)
=e^{i\tilde{\tilde\rho}\cdot x}\left(\left(
                         \begin{array}{c}
                           r^{(4)} \\
                           s^{(4)}\\
                         \end{array}
                       \right)
+O(\tau^{-1})\right).
\end{align*}

We define
$$
     \mathfrak{u}_\rho^1:=e^{-i\rho\cdot x}|\rho|^{-2}u^{(1)\sharp}_\rho=-2\mu^{-1}s
     \left(
       \begin{array}{c}
         0 \\
         0 \\
         0 \\
         1+i \\
         i-1 \\
         0 \\
       \end{array}
     \right)
     +O(\tau^{-1})
$$
and
\begin{align*}
      \mathfrak{u}_\rho^{2,1}
      :=&e^{-i\rho\cdot x}|\rho|^{-1}u^{(2)\sharp}_\rho\\
      =&\frac{1}{\sqrt{2}}\left(
       \begin{array}{c}
         i\frac{\sqrt{\mu}}{\lambda+2\mu}\rho\cdot r^{(2)} \\
         i\frac{\sqrt{\mu}}{\lambda+2\mu}\rho\cdot r^{(2)}\\
         i\frac{\sqrt{\mu}}{\lambda+2\mu}\rho\cdot r^{(2)}\\
         -2\mu^{-1}\tau(1+i)s^{(2)}+O(1) \\
         -2\mu^{-1}\tau(i-1)s^{(2)}+O(1) \\
         O(1)  \\
       \end{array}
     \right)
     +O(\tau^{-1})
\end{align*}
and
\begin{align*}
      \mathfrak{u}_\rho^{2,2}
      :=&e^{-i\tilde\rho\cdot x}|\tilde\rho|^{-1}u^{(3)\sharp}_\rho\\
      =&\frac{1}{\sqrt{2}}\left(
       \begin{array}{c}
         i\frac{\sqrt{\mu}}{\lambda+2\mu}\tilde\rho\cdot r^{(3)} \\
         i\frac{\sqrt{\mu}}{\lambda+2\mu}\tilde\rho\cdot r^{(3)}\\
         i\frac{\sqrt{\mu}}{\lambda+2\mu}\tilde\rho\cdot r^{(3)}\\
         -2\mu^{-1}\tau(-1-i)s^{(3)}+ O(1) \\
         -2\mu^{-1}\tau(1-i)s^{(3)}+ O(1) \\
         O(1) \\
       \end{array}
     \right)
     +O(\tau^{-1}).
\end{align*}
Let $\mathfrak{u}_\rho^2=\mathfrak{u}_\rho^{2,1}+\mathfrak{u}_\rho^{2,2}$, then the higher order is eliminated. Thus we have
\begin{align*}
    \mathfrak{u}_\rho^2=\frac{1}{\sqrt{2}}\left(
       \begin{array}{c}
         i\frac{\sqrt{\mu}}{\lambda+2\mu}(\rho+\tilde\rho)\cdot r^{(3)} \\
         i\frac{\sqrt{\mu}}{\lambda+2\mu}(\rho+\tilde\rho)\cdot r^{(3)} \\
         i\frac{\sqrt{\mu}}{\lambda+2\mu}(\rho+\tilde\rho)\cdot r^{(3)}\\
         O(1) \\
         O(1) \\
         O(1) \\
       \end{array}
     \right)
     +O(\tau^{-1}).
\end{align*}
Also, we define that
$$
     \mathfrak{u}_\rho^3:=e^{-i\rho\cdot x}|\rho|^{-2}u^{(4)\sharp}_\rho=-2\mu^{-1}s
     \left(
       \begin{array}{c}
         0 \\
         0 \\
         0 \\
         1+i \\
         0 \\
         i-1 \\
       \end{array}
     \right)
     +O(\tau^{-1}).
$$

We denote
\begin{align*}
        \mathfrak{u}^{(0)\natural}_{\rho}=\Re{e^{-i\rho\cdot x}|\rho|^{-2} u^{(0)\natural}_\rho};\ \ \ \natural=\sharp,\flat,*
\end{align*}
and
\begin{align*}
\mathfrak{u}^{(1)\natural}_\rho=\Re e^{-i\rho\cdot x}|\rho|^{-2}u^{(1)\natural}_\rho ;\ \
\mathfrak{u}^{(2)\natural}_\rho=\Im e^{-i\rho\cdot x}|\rho|^{-2}u^{(1)\natural}_\rho;\\
\mathfrak{u}^{(3)\natural}_\rho=\Re\left(e^{-i\rho\cdot x}|\rho|^{-1}u^{(2)\natural}_\rho+e^{-i\tilde\rho\cdot x}|\tilde\rho|^{-1}u^{(3)\natural}_\rho\right);\ \
\mathfrak{u}^{(4)\natural}_\rho=\Re e^{-i\rho\cdot x}|\rho|^{-2}u^{(4)\natural}_\rho.
\end{align*}

Then $\left\{\mathfrak{u}^{(j)\sharp}_\rho:\  1\leq j\leq 4\right\}$ are linearly independent in the neighborhood $U=\cap_{l=0}^3 U_l$ of $x_0$ as $\tau$ is sufficiently large. Therefore, there exist functions $\Theta_j,\  j=1,2,3,4,$ such that
$$
\mathfrak{u}^{(0)\sharp}_{\rho} +\sum^4_{j=1}\Theta_j\mathfrak{u}^{(j)\sharp}_{\rho} =0.
$$
Summing over, we get the following equation as in Section \ref{reconslambda}:
\begin{align}\label{3frac}
     \mathrm{v}
     \cdot G
     =-k^2\left(\mathfrak{u}^{(0)*}_{\rho} +\sum^4_{j=1}\Theta_j\mathfrak{u}^{(j)*}_{\rho}\right),
\end{align}
with $\mathrm{v}=\mathfrak{u}^{(0)\flat}_{\rho} +\sum^4_{j=1}\Theta_j\mathfrak{u}^{(j)\flat}_{\rho}$.
We obtain that
\begin{align}\label{3lambdamu}
\kappa\lambda=\sigma\mu-k^2\left(\mathfrak{u}^{(0)*}_{\rho} +\sum^4_{j=1}\Theta_j\mathfrak{u}^{(j)*}_{\rho}\right),
\end{align}
where
\begin{align*}
     \kappa= (1,1,1,0,0,0)^T\cdot  \mathrm{v},\\
     \sigma=-(1,1,1,1,1,1)^T\cdot    \mathrm{v}.
\end{align*}
\begin{lemma}
     $\kappa$ does not vanish in some neighborhood of $x_0$.
\end{lemma}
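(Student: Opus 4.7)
The plan is to imitate the proof of Lemma \ref{kappa} from the two-dimensional case, adapted to the six-component vectors and to the five CGO solutions $u_\rho^{(j)}$, $0\le j\le 4$, constructed in Section \ref{3reconslambda}. First I would show that the coefficients $\Theta_j$ produced by the relation $\mathfrak{u}_\rho^{(0)\sharp}+\sum_{j=1}^4\Theta_j\mathfrak{u}_\rho^{(j)\sharp}=0$ satisfy $\Theta_j\sim\tau^0$ as $\tau\to\infty$. This follows from the explicit leading-order behavior of the vectors $\mathfrak{u}_\rho^{(j)\sharp}$ computed just before this lemma, together with the fact that $\mathfrak{u}_\rho^{(0)\sharp}$ is normalized to be $O(1)$ via the factor $|\rho|^{-2}$ and the choice $\rho\cdot r^{(0)}(x_0)=1$.

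Next I would analyze the six-component vector $\mathrm{v}=\mathfrak{u}_\rho^{(0)\flat}+\sum_{j=1}^4\Theta_j\mathfrak{u}_\rho^{(j)\flat}$ contracted with $(1,1,1,0,0,0)^T$. The first three entries of $u^\flat$ are $\partial_i(\nabla\cdot u)$. Since $s^{(1)}(x_0)=0$ and $\rho\cdot r^{(1)}(x_0)=0$, the leading terms of $(1,1,1,0,0,0)^T\cdot\mathfrak{u}_\rho^{(j)\flat}(x_0)$ vanish for $j=1,2$, and a calculation analogous to (\ref{g}), (\ref{gg}) shows these contributions are $O(\tau^{-1})$. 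Similarly, because $\tilde{\tilde\rho}\cdot r^{(4)}(x_0)=0$ and $s^{(4)}(x_0)\neq 0$ only enters the last three components, the $j=4$ contribution to $\kappa$ is also $O(\tau^{-1})$. For $j=3$, the combination $\mathfrak{u}_\rho^{(3)\flat}=\Re(e^{-i\rho\cdot x}|\rho|^{-1}u^{(2)\flat}_\rho+e^{-i\tilde\rho\cdot x}|\tilde\rho|^{-1}u^{(3)\flat}_\rho)$ has its $O(\tau)$ part killed by the identity $r^{(2)}=r^{(3)}$ (mirroring the vanishing of the $O(\tau)$ term of $\kappa_4$ in Lemma \ref{kappa}), so only an $O(1)$ leading term $\varphi(x_0)$ survives, and its explicit form can be read off from the expressions for $\nu^{(2)},\nu^{(3)}$ adapted to three dimensions.

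The remaining term is $\kappa_1(x_0):=(1,1,1,0,0,0)^T\cdot\mathfrak{u}_\rho^{(0)\flat}(x_0)$. Using $\rho=\tau(1,i,0)$ and the normalization $\rho\cdot r^{(0)}(x_0)=1$, a direct computation yields
\[
\kappa_1(x_0)=c\,\frac{-\sqrt{\mu(x_0)}}{\lambda(x_0)+2\mu(x_0)}+O(\tau^{-1})
\]
for some nonzero numerical constant $c$. Combining all the pieces,
\[
\kappa(x_0)\sim c\,\frac{-\sqrt{\mu(x_0)}}{\lambda(x_0)+2\mu(x_0)}+\Theta_3(x_0)\,\varphi(x_0).
\]
Finally, as in Remark \ref{rk}, by replacing the original $g^{(0)}$ by $\hat g^{(0)}$ with $\hat r^{(0)}(x_0)=r^{(0)}(x_0)/M$ one can make $|\Theta_3(x_0)|$ as small as desired while keeping the normalization constraints on $(\hat r^{(0)},\hat s^{(0)})$. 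Choosing $M$ and $\tau$ large enough, the first term dominates and $\kappa(x_0)\neq 0$; continuity of $\kappa$ then produces a neighborhood of $x_0$ on which $\kappa$ does not vanish.

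The main obstacle I anticipate is keeping bookkeeping of the leading-order terms correct in all six entries when the three directions $\rho,\tilde\rho,\tilde{\tilde\rho}$ are mixed, and in particular verifying that the $O(\tau)$ cancellation in $\mathfrak{u}_\rho^{(3)\flat}$ really occurs after contracting with $(1,1,1,0,0,0)^T$ (this uses crucially $r^{(2)}=r^{(3)}$, and is the exact analogue of the computation that made $\kappa_4$ of order one rather than $\tau$ in two dimensions). Everything else is a straightforward transcription of the 2D argument into the 3D setting.
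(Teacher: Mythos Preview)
Your overall strategy matches the paper's: it too reduces everything to the two-dimensional computation of Lemma~\ref{kappa}, uses $\rho\cdot r^{(1)}(x_0)=0$ and $\tilde{\tilde\rho}\cdot r^{(4)}(x_0)=0$ to show that the $j=1,2,4$ contributions to $\kappa(x_0)$ are $O(\tau^{-1})$, identifies the $j=0$ piece as $-\frac{\sqrt{\mu}}{2(\lambda+2\mu)}+O(\tau^{-1})$, and then argues that the $\Theta_3$-term can be made harmless so that continuity finishes the proof.

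Two small corrections are needed. First, in this section the CGO data satisfy $s^{(1)}\neq 0$ near $x_0$ (see the construction of $u_\rho^{(1)}$ in Section~\ref{3reconslambda}); the condition $s^{(1)}(x_0)=0$ you invoke belongs to the $\mu$-reconstruction and is not available here. This does no harm, since only $\rho\cdot r^{(1)}(x_0)=0$ is actually used to kill the $j=1,2$ contributions to the first three slots of $\mathrm v$. Second, and more substantively, the mechanism for making $\Theta_3(x_0)$ small is not the one from Remark~\ref{rk}: here the leading $O(1)$ components of $\mathfrak u^{(0)\sharp}_\rho$ (after the $|\rho|^{-2}$ normalization) sit in the last three slots and are proportional to $s^{(0)}$, while the main term $\kappa_1(x_0)=(1,1,1,0,0,0)^T\cdot\mathfrak u^{(0)\flat}_\rho(x_0)$ is proportional to $\rho\cdot r^{(0)}$. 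Rescaling $r^{(0)}\to r^{(0)}/M$ would therefore shrink $\kappa_1$ along with $\Theta_3$, defeating the purpose. The paper instead takes $s^{(0)}$ small, which drives all $\Theta_j$ (in particular $\Theta_3$) to zero while leaving $\kappa_1(x_0)$ untouched. With this adjustment your argument goes through exactly as in the paper.
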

\begin{proof}
    Similar argument as Lemma \ref{kappa}.
   Since
   $$
   \left(
       \begin{array}{c}
        \partial_1(\nabla\cdot u_\rho) \\
         \partial_2(\nabla\cdot u_\rho) \\
         \partial_3(\nabla\cdot u_\rho) \\
       \end{array}
     \right)=e^{i\rho\cdot x}\left(\left(
       \begin{array}{c}
         -\frac{\sqrt{\mu}}{\lambda+2\mu}\rho_1(\rho\cdot r)+O(|\rho|)\\
         -\frac{\sqrt{\mu}}{\lambda+2\mu}\rho_2(\rho\cdot r)+O(|\rho|)\\
         -\frac{\sqrt{\mu}}{\lambda+2\mu}\rho_2(\rho\cdot r)+O(|\rho|)\\
       \end{array}
     \right)
     +O(1)\right)
   $$
    and $\rho\cdot r^{(1)}(x_0)=0=\tilde{\tilde\rho}\cdot r^{(4)}(x_0)$, we have
    \begin{align*}
    (1,1,1,0,0,0)^T\cdot \left(\Theta_1\mathfrak{u}^{(1)\flat}_{\rho} +\Theta_2\mathfrak{u}^{(2)\flat}_{\rho} +\Theta_4\mathfrak{u}^{(4)\flat}_{\rho} \right)(x_0)
     \sim \tau^{-1}.
    \end{align*}
    Hence, we obtain that
      \begin{align*}
      \kappa(x_0)&=(1,1,1,0,0,0)^T\cdot \left(\mathfrak{u}^{(0)\flat}_{\rho} +\Theta_3\mathfrak{u}^{(3)\flat}_{\rho}\right)(x_0) +O(\tau^{-1})\\
     &\sim -\frac{\sqrt{\mu}}{2(\lambda+2\mu)}+
    \Theta_3\mathfrak{u}^{(3)\flat}_{\rho} (x_0).
   \end{align*}
     We can take $s^{(0)}$ small enough such that $\Theta_3(x_0)$ is small. Thus, $\kappa(x_0)\neq 0$. By continuity of $\kappa$, $\kappa$ does not vanish in some neighborhood $V$ of $x_0$.
\end{proof}
Let $\Omega_0=U\cap V\cap \Omega$. Since $\kappa$ does not vanish in $\Omega_0$,
we have
\begin{align}\label{3ll}
     \lambda= \frac{\sigma\mu}{\kappa}-\frac{k^2}{\kappa}\left(\mathfrak{u}_\rho^{(0)*}+\sum^4_{j=1}\Theta_j\mathfrak{u}_\rho^{(j)*}\right)\ \ \ \ \ \hbox{in $\Omega_0$}.
\end{align}
Applying the similar proof as in Theorem \ref{mu2}, we can deduce the following result.

\begin{theorem}(Global reconstruction of $\lambda$)
Let $\Omega$ be an open bounded domain of $\mathbb{R}^3$ with smooth boundary. Suppose that the Lam\'{e} parameters $(\lambda, \mu)$ and $(\tilde\lambda,\tilde\mu)\in \mathcal{P}$ and $\mu|_{\partial\Omega}=\tilde\mu|_{\partial\Omega}$.
Let $u^{(j)}$ and $\tilde u^{(j)}$ be the solutions of the elasticity system with boundary data $g^{(j)}$ for parameters $(\lambda, \mu)$ and $(\tilde\lambda,\tilde\mu)$, respectively.
Let $H=(u^{(j)})_{1\leq j\leq J}$ and $\tilde H=(\tilde u^{(j)})_{1\leq j\leq J}$ be the corresponding internal data for $(\lambda, \mu)$ and $(\tilde\lambda,\tilde\mu)$, respectively for some integer $J\geq 10$ .

Then there is an open set of the boundary data $(g^{(j)})_{1\leq j\leq J}$ such that
if $H=\tilde H $
implies $\lambda=\tilde \lambda$ in $\Omega$.\\
Moreover, we have the stability estimate
$$
     \|\lambda-\tilde\lambda\|_{C(\Omega)}\leq C\|H-\tilde H \|_{C^2(\Omega)}.
$$
\end{theorem}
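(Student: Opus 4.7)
The plan is to mimic the strategy already carried out for Theorem \ref{mu2} (global $\mu$ in 2D) and its 3D analogue Theorem \ref{mu3}, feeding in the local representation (\ref{3ll}) together with Theorem \ref{mu3} to control $\lambda-\tilde\lambda$. First, for each fixed $x_{0}\in\overline\Omega$ I would construct the five CGO solutions $u_\rho^{(0)},u_\rho^{(1)},u_\rho^{(2)},u_\rho^{(3)},u_\rho^{(4)}$ built in Section \ref{3reconslambda}, and then the three CGO solutions used in Section \ref{3reconsmu} for $\mu$; together they account for the local data set $H_{x_{0}}$, hence the requirement $J\ge 3+2+5=10$. The perturbation argument of Sections \ref{localmu} and \ref{3reconsmu} gives $C^{1,\alpha}$ boundary data $g^{(j)}$ within $\varepsilon$ of $g^{(j)}_\rho$ so that the corresponding solutions $u^{(j)}$ satisfy $\|u^{(j)}-u^{(j)}_\rho\|_{C^2(\overline\Omega)}<C\varepsilon$; by elliptic regularity the three linear independence of $\{\mathfrak u^{(j)\sharp}\}$ and the non-vanishing of $\kappa$ persist in a common neighborhood $\Omega_0$ of $x_{0}$.

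In that neighborhood the identity (\ref{3ll}) yields the pointwise formula
\begin{equation*}
\lambda-\tilde\lambda=\frac{\sigma\mu}{\kappa}-\frac{\tilde\sigma\tilde\mu}{\tilde\kappa}-\frac{k^2}{\kappa}\Bigl(\mathfrak u_\rho^{(0)*}+\sum_{j=1}^{4}\Theta_j\mathfrak u_\rho^{(j)*}\Bigr)+\frac{k^2}{\tilde\kappa}\Bigl(\tilde{\mathfrak u}_\rho^{(0)*}+\sum_{j=1}^{4}\tilde\Theta_j\tilde{\mathfrak u}_\rho^{(j)*}\Bigr),
\end{equation*}
where each of $\kappa,\sigma,\Theta_j$ depends algebraically on the internal data $H_{x_{0}}$ and its first derivatives (so the differences $\kappa-\tilde\kappa$, $\sigma-\tilde\sigma$, $\Theta_j-\tilde\Theta_j$ are controlled by $\|H_{x_{0}}-\tilde H_{x_{0}}\|_{C^2(\Omega_0)}$). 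Since $\kappa$ is uniformly bounded away from zero on $\Omega_0$ and $\mu,\tilde\mu\in\mathcal P$, rearranging gives the local estimate
\begin{equation*}
\|\lambda-\tilde\lambda\|_{C(\Omega_0)}\le C\bigl(\|\mu-\tilde\mu\|_{C(\Omega_0)}+\|H_{x_{0}}-\tilde H_{x_{0}}\|_{C^2(\Omega_0)}\bigr),
\end{equation*}
exactly as in Lemma \ref{prelim2}. Combining with Theorem \ref{mu3} (which gives $\mu=\tilde\mu$ in $\Omega$ whenever $H=\tilde H$ and $\mu|_{\partial\Omega}=\tilde\mu|_{\partial\Omega}$, together with a $C(\Omega)$ stability estimate), one immediately obtains $\lambda=\tilde\lambda$ in $\Omega_0$ under the uniqueness hypothesis.

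To pass from local to global, I would cover the compact set $\overline\Omega$ by finitely many such neighborhoods $\Omega_{x_{1}},\dots,\Omega_{x_{N}}$, exactly as in Theorem \ref{mu2}: at each point $z\in\Omega$ pick one patch $\Omega_{x_{j}}$ containing it and iterate the local inequality at most $N-1$ times to reach the boundary (where $\mu|_{\partial\Omega}=\tilde\mu|_{\partial\Omega}$ anchors the $\mu$-estimate), absorbing all internal-data differences into $\|H-\tilde H\|_{C^2(\Omega)}$. Summing the finitely many local estimates and using Theorem \ref{mu3} to replace $\|\mu-\tilde\mu\|_{C(\Omega)}$ by $C\|H-\tilde H\|_{C^2(\Omega)}$ yields
\begin{equation*}
\|\lambda-\tilde\lambda\|_{C(\Omega)}\le C\|H-\tilde H\|_{C^2(\Omega)},
\end{equation*}
and uniqueness follows by taking $H=\tilde H$. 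The only delicate step is verifying that the same chosen collection of $C^{1,\alpha}$ boundary data $(g^{(j)})_{1\le j\le J}$ simultaneously realizes, for every $x\in\overline\Omega$, the linear independence of $\{\mathfrak u^{(j)\sharp}\}$, non-vanishing of $\kappa$ (and of $\{\beta_l\}$ for $\mu$), which I expect to be the main obstacle. I would handle it by the finite cover argument: pick CGO parameters $\rho$ associated to each $x_{l}$, take $\varepsilon$ smaller than the minimum robustness threshold over the finite cover, and invoke openness in $C^{1,\alpha}(\partial\Omega)$ of the set of admissible boundary data to conclude that the admissible set is nonempty and open, with $J\ge 10$ solutions sufficing.
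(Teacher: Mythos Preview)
Your proposal is correct and follows essentially the same approach as the paper, which simply states that the result follows by ``applying the similar proof as in Theorem \ref{mu2}'' together with the local representation (\ref{3ll}); you have in fact supplied more detail than the paper itself. One minor correction: your count ``$3+2+5$'' for the CGO solutions is slightly off, since Section \ref{3reconsmu} (the 3D reconstruction of $\mu$) uses five CGO solutions $u_\rho^{(0)},\tilde{\tilde u}_\rho^{(0)},u_\rho^{(1)},u_\rho^{(2)},u_\rho^{(3)}$ rather than three, and Section \ref{3reconslambda} uses another five, giving $5+5=10$ in total.
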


\textbf{Acknowlegments}\\

The author are grateful to professor Gunther Uhlmann for his encouragements and helpful discussions. The author also would like to thank Professor Jenn-Nan Wang for taking the time to discuss some properties of the elasticity system with her. The author is partially supported by NSF.

\end{document}